\DeclareSymbolFont{cyrletters}{OT2}{wncyr}{m}{n}
\DeclareMathSymbol{\Sha}{\mathalpha}{cyrletters}{"58}
\newcommand{\Bref}[1]{(\hyperref[item:Brauer]{\ensuremath{B_{#1}}})}
\newcommand{\PIDref}[1]{(\hyperref[item:PID]{\ensuremath{PID_{#1}}})}
\newcommand{\AFref}[1]{(\hyperref[item:AbrashkinFontaine]{\ensuremath{AV_{#1}}})}
\newcommand{\Hodgeref}[1]{(\hyperref[item:HodgeNumbers]{\ensuremath{h_{#1}^{p,q}}})}
\let\realItem\item 
\NewDocumentCommand\myItem{ o }{%
   \IfNoValueTF{#1}%
      {\realItem}
      {\realItem[#1]\def\@currentlabel{#1}}
}
\setlist[enumerate]{
    before=\let\item\myItem,       
    label=\textnormal{(\arabic*)}, 
    widest=(2')                    
}
\title[]
{On surfaces with smooth projective models over $\mathbb{Z}$} 
\author{Fabio Bernasconi, Gebhard Martin and Zsolt Patakfalvi} 
\subjclass[2020]{}
\keywords{}
\newcommand{\FB}[1]{\textcolor{orange}{(FB: #1)}}
\address{Dipartimento di Matematica “Guido
Castelnuovo”, SAPIENZA Università di Roma, Piazzale Aldo Moro 5, I-00185,
Roma} 
\email{fabio.bernasconi@uniroma1.it}
\address{Mathematisches Institut  Universität Bonn, Endenicher Allee 60, 53115 Bonn, Germany}
\email{gmartin@math.uni-bonn.de} 
\address{EPFL SB MATH CAG
	MA C3 615 (B\^atiment MA)
	Station 8
	CH-1015 Lausanne}
\email{zsolt.patakfalvi@epfl.ch}
\newcommand{\Z}{\mathbb{Z}}
\begin{document}

\maketitle

\begin{abstract}
    In this expository article, we prove a birational classification of smooth projective models of surfaces with negative Kodaira dimension over $\mathbb{Z}$ and over more general rings of integers $\mathcal{O}_K$, depending on their arithmetic and cohomological invariants. 
    Along the way we collect some results on smooth projective models of surfaces over Dedekind domains.
\end{abstract}

\tableofcontents

\section{Introduction}

At the ICM held in Stockholm in 1962 \cite{Saf63}, Šafarevič asked whether there exist smooth projective curves of genus at least 1 defined over $\mathbb{Q}$ having good reduction at every prime $p$.
This was later answered in the negative by Abrashkin and Fontaine in \cite{Fon85, Abr90}, who showed in fact a stronger result: there are no smooth abelian schemes over $\mathbb{Z}$.
It is natural to extend Šafarevič’s question to higher dimensions:

\begin{center}
    $(\Sha):$ \emph{What are the smooth projective schemes over $\mathbb{Z}$?}
\end{center}

As reported by Mazur, Grothendieck \cite[pp. 242–243]{Maz86}, and Schröer \cite[Introduction]{Sch23}, the only examples that come to mind, up to birational transformations, are flag varieties, toric varieties, Hilbert schemes of points on smooth models of rational surfaces and projectivisations of vector bundles over such schemes. 
An interesting open question is whether there exist smooth projective schemes over $\mathbb{Z}$ with non-negative Kodaira dimension.

Question $(\Sha)$ has attracted notable research interest in recent years: a groundbreaking result of Schr\"{o}er \cite{Sch23} shows that there are no Enriques surfaces  over $\mathbb{Z}$, thus showing that smooth models of surfaces with trivial first Chern class over $\mathbb{Z}$ do not exist. 
Moreover, the existence and classification of certain classes of Fano 3-folds and $n$-folds (such as $V_5$, $V_{22}$ and Mukai $n$-folds of genus 7) over the integers have been recently studied in \cite{ito2024arithmeticfinitenessmukaivarieties, ito2025quinticdelpezzothreefolds, itov22}, giving the first examples which are not combinatorial in nature. 

Another possible generalisation of the original question of Šafarevič is to ask which smooth projective schemes exist when the base $\mathbb{Z}$ is replaced by a more general ring of integers in terms of their arithmetic and cohomological invariants. 
In this case, the case of families of abelian varieties over rings of integers of cyclotomic fields and quadratic fields have been thoroughly investigated by Schoof in \cite{Sch03, Sch25} and Schr\"{o}er has constructed smooth models of K3 and Enriques surfaces over the ring of integers of $\mathbb{Q}(\sqrt{7})$ and $\mathbb{Q}(\sqrt[3]{2}, \zeta_3)$ \cite{schröer2025k3surfacessmallnumber}.

In this note, we address Question $(\Sha)$ for smooth projective families of surfaces over the integers and, more generally, over Dedekind schemes, from both birational and biregular perspectives. 
First, in \autoref{section: general_facts} we recollect the known results for smooth models over rings of integers and Dedekind schemes in the literature which we will use throughout this note.
From the birational classification perspective, we then apply techniques from the Minimal Model Program (MMP) for arithmetic threefolds, developed in recent years by several authors \cite{7authors, TY23, BBS24, Sti24, HW23, XX24}. Since we work with smooth families of surfaces over Dedekind schemes, flips do not occur at any step of the MMP; this allows us to quickly reprove the main results of the MMP in \autoref{section:MMP}.

From the biregular perspective, we then bound the invariants of smooth surfaces arising as end products of the MMP (namely, Mori fibre spaces or minimal models), in terms of the arithmetic and cohomological properties of the base Dedekind scheme.
The case of Mori fibre spaces is treated in detail in  \autoref{sec: kod_-infty}, where we study smooth projective models of $\mathbb{P}^2$ and Hirzebruch surfaces over Dedekind domains.

Gathering all the previous results, in \autoref{sec: smooth_Z} we specialise to the study of smooth models of surfaces over $\mathbb{Z}$ and obtain the following restrictions on the possible cases:
\begin{theorem}[See \autoref{thm: rough_classification_Z}] \label{rough_class_Z}
    Let $X \to \Spec(\mathbb{Z})$ be a projective smooth morphism, where $X$ is integral and $\dim(X)=3$. Then, $X$ is the successive blow-up at $\mathbb{Z}$-points of one of the following smooth projective schemes $Y \to \Spec(\mathbb{Z})$:
    \begin{enumerate}
        \item \label{rough_class_Z:Hirzebruch} $Y \simeq \mathbb{P}^2_{\mathbb{Z}}$ or $Y$ is a Hirzebruch surface over $\bZ$. The latter means that $Y\simeq \mathbb{P}_{\mathbb{P}^1_{\mathbb{Z}}}(\mathcal{E})$ where $\mathcal{E}$ is a rank $2$ vector bundle corresponding to an extension class in $\Ext^1(\mathcal{O}_{\mathbb{P}^1_{\mathbb{Z}}}, \mathcal{O}_{\mathbb{P}^1_{\mathbb{Z}}}(n))$ for some integer $n$;
        \item a $K_Y$-trivial fibration $Y \to \mathbb{P}^1_\mathbb{Z}$ such that $Y_{\mathbb{Q}}$ has Kodaira dimension 1;
        \item the divisor $K_Y$ is big and nef with  $K_{Y_\mathbb{Q}}^2 \leq 9$.
    \end{enumerate}
\end{theorem}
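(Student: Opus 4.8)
The plan is to run the relative minimal model program for $K_X$ over $\Spec(\mathbb{Z})$ and then to read off the three cases from the possible outputs, organised according to the Kodaira dimension of the generic fibre $X_{\mathbb{Q}}$. First I would invoke the version of the MMP for smooth families established in \autoref{section:MMP}. The key point is that because $X \to \Spec(\mathbb{Z})$ is a \emph{smooth} family of surfaces, no flipping contractions can occur: every step of the program is a divisorial contraction, and such a contraction is the inverse of the blow-up of a smooth threefold along a section of the family, that is, along a $\mathbb{Z}$-point of the contracted model. Termination of the program then yields a smooth projective scheme $Y \to \Spec(\mathbb{Z})$ which is either a Mori fibre space or a relative minimal model (so that $K_Y$ is relatively nef), and $X$ is recovered from $Y$ by successively blowing up $\mathbb{Z}$-points. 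This already produces the overall shape of the statement, and it remains to identify $Y$ in each case.

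If the program terminates with a Mori fibre space, then $\kappa(X_{\mathbb{Q}}) = -\infty$, and I would apply the classification of smooth Mori fibre spaces of relative dimension two over Dedekind domains proved in \autoref{sec: kod_-infty}. This identifies $Y$ either with $\mathbb{P}^2_{\mathbb{Z}}$ or with a Hirzebruch surface $\mathbb{P}_{\mathbb{P}^1_{\mathbb{Z}}}(\mathcal{E})$ of the stated form, giving case (1).

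If instead $K_Y$ is relatively nef, I would split according to $\kappa(Y_{\mathbb{Q}}) \in \{0,1,2\}$. The value $\kappa(Y_{\mathbb{Q}}) = 0$ forces $K_{Y_{\mathbb{Q}}}$ to be numerically trivial, so $Y_{\mathbb{Q}}$ would be a smooth surface with trivial first Chern class admitting a smooth model over $\mathbb{Z}$; this is precisely what Schröer's non-existence theorem \cite{Sch23} rules out, and hence this value does not occur. When $\kappa(Y_{\mathbb{Q}}) = 1$, the relative Iitaka fibration associated with $K_Y$ contracts exactly the $K_Y$-trivial curves and produces a fibration $Y \to C$ over a smooth proper curve $C / \Spec(\mathbb{Z})$; since there are no smooth proper curves of positive genus over $\mathbb{Z}$ (by Fontaine and Abrashkin \cite{Fon85, Abr90}), one must have $C \cong \mathbb{P}^1_{\mathbb{Z}}$, and $K_Y$ is trivial on the fibres, which is case (2). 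Finally, for $\kappa(Y_{\mathbb{Q}}) = 2$ the divisor $K_Y$ is big and nef. To obtain the bound $K_{Y_{\mathbb{Q}}}^2 \le 9$ I would use that a smooth proper scheme over $\mathbb{Z}$ admits no nonzero global $i$-forms for $i \ge 1$, whence $q(Y_{\mathbb{Q}}) = p_g(Y_{\mathbb{Q}}) = 0$ and $\chi(\mathcal{O}_{Y_{\mathbb{Q}}}) = 1$; Noether's formula together with the Bogomolov--Miyaoka--Yau inequality $K^2 \le 3 c_2$ then forces $K_{Y_{\mathbb{Q}}}^2 \le 9$, which is case (3).

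The main obstacle lies in the very first step: one must guarantee that the relative MMP preserves smoothness of the total family and that each divisorial contraction is genuinely the blow-up of a $\mathbb{Z}$-point, rather than of some closed point concentrated in a single special fibre. This is exactly where the smoothness hypothesis on $X \to \Spec(\mathbb{Z})$ is indispensable, and it is the content of the foundational results in \autoref{section:MMP}; a secondary, but genuinely deep, input is Schröer's theorem, which is what permits the Kodaira-dimension-zero case to be absent from the list.
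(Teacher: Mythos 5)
Your overall architecture coincides with the paper's: run the MMP of \autoref{section:MMP} (via \autoref{cor: red_smooth_models}, which gives that every step is a blow-up of a $\mathbb{Z}$-point since the blown-up centres are \'etale over a simply connected base), classify the Mori fibre space outcomes by \autoref{thm: class_kod_negative}, rule out Kodaira dimension $0$, and split the nef case by $\kappa(Y_{\mathbb{Q}})$. Two remarks on where you diverge. First, for case (3) you use Noether plus Bogomolov--Miyaoka--Yau ($K^2 \leq 3c_2$, hence $4K^2 \leq 36$); this is valid over $\mathbb{Q}$, but the paper avoids BMY entirely: the Hodge vanishing of \autoref{thm: abrashkin} gives $q = p_g = 0$ and $b_2(Y_0) = h^{1,1}(Y_0) = \rho(Y_0)$, so Noether alone yields the stronger equality $K_{Y_0}^2 = 10 - \rho(Y) \leq 9$. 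Your route is correct but imports a deep transcendental inequality where an elementary topological count suffices and loses the exact formula. Second, your attribution of the $\kappa = 0$ exclusion solely to Schr\"oer is shorthand: \cite{Sch23} handles the Enriques case, while K3, abelian and bielliptic surfaces are excluded by the Hodge-number vanishing of \autoref{thm: abrashkin} (your earlier remark on global forms covers this, but it should be said explicitly at this point).

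The one genuine gap is in the $\kappa(Y_{\mathbb{Q}}) = 1$ case, where you assert that ``the relative Iitaka fibration associated with $K_Y$ \dots produces a fibration $Y \to C$ over a smooth proper curve $C/\Spec(\mathbb{Z})$.'' Neither the existence nor the smoothness of this fibration is automatic. Fibrewise abundance gives that each $K_{Y_t}$ is semiample, but in mixed characteristic this does not formally imply that $K_Y$ is semiample over $\mathbb{Z}$ (\autoref{prop: easy_bpf} does not apply, since $L - K_Y = 0$ is not big); the paper needs \cite[Theorem 1.2]{Wit24} to pass from fibrewise to relative semiampleness. Moreover, once $p \colon Y \to C$ exists, $C$ is a priori only a normal two-dimensional scheme, and its smoothness over $\mathbb{Z}$ must be proved: the paper checks that $H^1(C_0,\mathcal{O}_{C_0}) \hookrightarrow H^1(Y_0,\mathcal{O}_{Y_0}) = 0$ forces $C_0$ to have genus $0$, that each $C_p$ is integral with $h^1(C_p,\mathcal{O}_{C_p}) = 0$ by flatness of $\chi$, hence a smooth conic, and only then applies \autoref{prop:curve_Z}. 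Finally, note that \autoref{prop:curve_Z} is not merely the absence of positive-genus curves over $\mathbb{Z}$: the genus-$0$ step (\autoref{lem: smooth_curves_g0}) uses $\mathrm{Br}(\mathbb{Z})[2] = 0$ and the PID property to conclude $C \simeq \mathbb{P}^1_{\mathbb{Z}}$, so your citation of Fontaine--Abrashkin alone does not close this case.
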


We leave open the (very!) interesting question of whether examples of smooth models appearing in cases (2) and (3) of \autoref{rough_class_Z} actually exist.

As the classification of vector bundles of rank 2 on $\mathbb{P}^1_{\mathbb{Z}}$ is a hard problem (cf. \cite{Smirnov15,Polyakov}), we study instead the birational geometry of smooth models of Hirzebruch surfaces appearing in (1) of \autoref{thm: rough_classification_Z} over general Dedekind domains.
Recall that, over a field $k$, by Grothendieck's splitting principle every vector bundle $\mathcal{E}$ of rank $2$ over $\mathbb{P}^1_k$ satisfies $\mathcal{E} \simeq \mathcal{O}_{\mathbb{P}^1_k}(i) \oplus \mathcal{O}_{\mathbb{P}^1_k}(j)$ for some $j \geq i$ and the integer $j-i$ is called the \emph{degree} ${\rm deg}(Y)$ of the associated Hirzebruch surface $Y \coloneqq \mathbb{P}_{\mathbb{P}^1_k}(\mathcal{E})$.
The easiest example of a smooth model of a degree $n$ Hirzebruch surface over $\mathbb{Z}$ is thus $\bP_{\bP^1_{\bZ}} (\sO_{\bP^1_{\bZ}} \oplus \sO_{\bP^1_{\bZ}}(n))$, but there are many more.
However, in \autoref{prop:dec_elem_transf}, we show that any two models are connected via a sequence of birational operations induced by elementary transformations (see \autoref{def:elementary_transformation}), which yields the following: 

\begin{proposition}
\label{prop:Hirzebruch_surface_intro}
In point \ref{rough_class_Z:Hirzebruch} of \autoref{rough_class_Z},
\begin{enumerate}
\item \label{item:parity} for all $p$ prime, the difference $\deg(Y_p) - \deg(Y_{\mathbb{Q}})$ is an even non-negative integer;
\item \label{item:all_appear} all numerical possibilities for the degrees of $Y_p$ satisfying point \ref{item:parity}  of the present proposition do appear. 
\item \label{item:elementary_transf} any two Hirzebruch surfaces over $\mathbb{Z}$ can be connected by a sequence of blow-ups and blow-downs  associated to elementary transformations to the corresponding vector bundles (see \autoref{def:elementary_transformation}, \autoref{prop:dec_elem_transf} and \autoref{prop:elementary_transf_to_blow_up}).
\end{enumerate}
\end{proposition}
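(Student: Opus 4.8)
The plan is to treat the three parts separately, the common thread being that a rank~$2$ bundle $\mathcal{E}$ on $\mathbb{P}^1_{\bZ}$ restricts on each fibre $\mathbb{P}^1_{\kappa(p)}$ (with $p$ a prime or the generic point) to a split bundle $\mathcal{O}(i_p)\oplus\mathcal{O}(j_p)$ with $i_p\le j_p$, so that $\deg(Y_p)=j_p-i_p$.

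\emph{Part \ref{item:parity}.} First I would note that $\det\mathcal{E}$ is a line bundle on $\mathbb{P}^1_{\bZ}$; since $\bZ$ is a PID we have $\Pic(\mathbb{P}^1_{\bZ})\cong\bZ$, so $\det\mathcal{E}\cong\mathcal{O}(d)$ for a single integer $d$, and therefore $i_p+j_p=d$ on every fibre. Consequently $\deg(Y_p)=d-2i_p\equiv d\pmod 2$ for all $p$, which gives $\deg(Y_p)\equiv\deg(Y_{\bQ})\pmod 2$. For non-negativity I would invoke upper semicontinuity over $\Spec\bZ$ of $p\mapsto h^0\big(\mathbb{P}^1_{\kappa(p)},\mathcal{E}_p(-k)\big)$ for each $k$: since for a split rank~$2$ bundle the spread $j_p-i_p$ is read off from how these dimensions jump as $k$ grows, semicontinuity forces the spread to be no smaller on a special fibre than on the generic one, i.e. $\deg(Y_p)\ge\deg(Y_{\bQ})$.

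\emph{Part \ref{item:all_appear}.} Here the key local input is the behaviour of an elementary transformation along a whole fibre (in the sense of \autoref{def:elementary_transformation}). Fix a prime $p$, let $F_p=\mathbb{P}^1_{\F_p}\subset\mathbb{P}^1_{\bZ}$ with inclusion $\iota$, and observe that since $F_p=\opdiv(p)$ is a principal divisor its normal bundle is trivial. Choosing a general surjection $\mathcal{E}|_{F_p}\twoheadrightarrow\mathcal{O}(j_p+m)$ with $m\ge 1$ and setting $\mathcal{E}'=\ker\big(\mathcal{E}\to\iota_*\mathcal{O}(j_p+m)\big)$, the standard restriction sequence of an elementary transformation, combined with triviality of $N_{F_p/\mathbb{P}^1_{\bZ}}$, shows that $\mathcal{E}'|_{F_p}$ acquires a saturated sub-line-bundle of degree $j_p+m$, hence has splitting type $(i_p-m,\,j_p+m)$; meanwhile $\mathcal{E}'$ agrees with $\mathcal{E}$ away from $F_p$. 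Thus one such transformation raises $\deg(Y_p)$ by exactly $2m$ and leaves the generic fibre and every other special fibre untouched. Starting from $\mathcal{O}\oplus\mathcal{O}(n)$ and performing, for each prime $p_i$ in a prescribed finite set, one transformation raising the degree at $p_i$ by the desired even amount $2m_i$, I obtain a single bundle realising $\deg(Y_{\bQ})=n$, $\deg(Y_{p_i})=n+2m_i$, and $\deg(Y_p)=n$ elsewhere. As any datum permitted by \ref{item:parity} has this shape (agreeing with the generic degree off a finite set), this proves \ref{item:all_appear}. The only point needing care is the existence of the general quotient over a possibly small residue field $\F_p$, which I would settle by exhibiting the complementary sub-bundle $\mathcal{O}(i_p-m)\hookrightarrow\mathcal{E}|_{F_p}$ from two sections without common zero.

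\emph{Part \ref{item:elementary_transf}} is the global counterpart, and I would deduce it from the results already at hand. By \autoref{prop:dec_elem_transf}, every rank~$2$ bundle on $\mathbb{P}^1_{\bZ}$ is obtained, up to twist by a line bundle (which does not alter $\mathbb{P}(\mathcal{E})$), from a fixed standard model by a finite sequence of elementary transformations; composing one sequence with the inverse of another connects any two bundles. Then \autoref{prop:elementary_transf_to_blow_up} translates each elementary transformation into a blow-up followed by a blow-down on the associated $\mathbb{P}^1$-bundles, so the connecting chain is realised by blow-ups and blow-downs of the corresponding Hirzebruch surfaces. The hard part throughout is the local computation underlying \ref{item:all_appear}–\ref{item:elementary_transf}: that triviality of $N_{F_p/\mathbb{P}^1_{\bZ}}$, forced by $F_p$ being a fibre (hence principal) divisor, is precisely what makes the transformation shift the splitting type by an even amount confined to the single prime $p$. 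This is simultaneously the mechanism behind the parity in \ref{item:parity} and the realisation in \ref{item:all_appear}, whereas the semicontinuity bookkeeping and the genericity of quotients over finite fields are routine by comparison.
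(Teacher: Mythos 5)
Your proposal is correct and takes essentially the same route as the paper: part \ref{item:parity} via constancy of the fibrewise degree plus semicontinuity of $h^0$ of twists (the content of \autoref{prop:types_of_Hirzebruch_surfaces}), part \ref{item:all_appear} via fibre-type elementary transformations of the split bundle with the jump computed from the restriction sequence and the triviality of $\mathcal{O}(F_p)$, which is exactly the mechanism of \autoref{ex:Hirzebruch_all_possible}, and part \ref{item:elementary_transf} by invoking \autoref{prop:dec_elem_transf} and \autoref{prop:elementary_transf_to_blow_up} just as the paper does. The only (cosmetic) differences are that the paper realizes all the prescribed jumps simultaneously as one kernel of a surjection from the normalized bundle $\mathcal{O}(-1)\oplus\mathcal{O}(-n-1)$ and reads off the type from an $h^0$ count via \autoref{prop:types_of_Hirzebruch_surfaces}(2), whereas you iterate one prime at a time and identify the splitting type directly by the vanishing of $\Ext^1\left(\mathcal{O}(i_p-m),\mathcal{O}(j_p+m)\right)$.
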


\begin{remark}
Unfortunately, in general, fixing the degrees of all the $Y_p$ does not determine the isomorphism class of $Y$.
In fact, not even the isomorphism types of the vector bundle over each completion $\mathbb{Z}_p$ determine $Y$ completely. For vector bundles with small jumps, this is studied in \cite{Smi16,Polyakov}. Explicitly, in the notation of \cite[Theorem 6]{Polyakov}, the two vector bundles $V_0(21,2)$ and $V_0(21,1)$ are isomorphic over all the $\mathbb{Z}_p$ but not over $\mathbb{Z}$.
\end{remark}

For explicit equations that give a full parameterization under extra assumptions, we refer to \autoref{ex:equations}. We conclude this note with the classification of smooth models of del Pezzo surfaces over $\mathbb{Z}$.\\

\noindent \textbf{Acknowledgements.} 
The authors would like to thank Christian Liedtke for useful and interesting discussion on smooth models of varieties over number fields.

\section{Preliminaries and general results} \label{section: general_facts}

\subsection{Notation}

\begin{itemize}
\item We work over an excellent Dedekind scheme (i.e. an excellent, regular, Noetherian, integral scheme of dimension $1$) $S$ whose closed points have perfect residue fields, unless stated otherwise.  
We denote by $K$ the function field of $S$. If $S$ is affine, we denote the corresponding ring by $R$.
\item Unless stated otherwise, our schemes $X$ are separated and of finite type over $S$. 
\item For a point $s \in S$, we denote by $X_{s} \coloneqq X \times_S \Spec(k(s))$ the fibre over $s$. We let $X_K$ be the generic fiber of $X$.
\item If $S = \Spec (\mathbb{Z})$, we set $X_0 \coloneqq X_\mathbb{Q}$ and $X_p \coloneqq X_{(p)}$.
\item For $X$ smooth and proper over $s$, the $(p,q)$-th Hodge number of $X_{s}$ is denoted $h^{p,q}(X_{s}) \coloneqq h^q(X_{s},\Omega_{X_{s}/k(s)}^p)$.
\item For $X$ smooth and proper over $S$, we let $b_i(X_{s})$ be the $i$-th $\ell$-adic Betti number of $X_{s}$, where $\ell \neq {\rm char}(k(s))$, and $\rho(X_{s})$ the Picard number of $X_{s}$. 
\item If $X$ is a scheme and $\mathcal{E}$ is a vector bundle on $X$, we call $\mathbb{P}_X(\mathcal{E}) \coloneqq \Proj_X \Sym^{\bullet} \mathcal{E}$  the projective bundle associated to $\mathcal{E}$. Given a field $k$, when no confusion can arise, we write $\mathbb{F}_n= \mathbb{P}_{\mathbb{P}^1_k}(\mathcal{O}_{\mathbb{P}^1_k} \oplus \mathcal{O}_{\mathbb{P}^1_k}(-n))$ for the Hirzebruch surface of degree $n$ over $k$.
\item Given a proper scheme $f: X \to S$ and a line bundle $L$ on $X$, we denote the $S$-algebra of $L$ by $R(X,L)= \bigoplus_{n \geq 0} f_* (L^{\otimes n})$.
\item A contraction $f\colon X \to Y$ is a proper morphism such that $f^{\#}\colon \mathcal{O}_Y \to f_*\mathcal{O}_X$ is an isomorphism.
\item Given a proper morphism $f \colon X \to Z$ of Noetherian schemes, a \emph{curve over $Z$} (the reference to $Z$ will be omitted whenever it is clear from the context) is a proper 1-dimensional integral subscheme of $X$ which is contracted to a point by $f$. 
We say two Cartier divisors $D_1, D_2$ are numerically equivalent over $Z$ $D_1 \equiv_Z D_2$ if $D_1 \cdot C= D_2 \cdot C$ for any curve over $Z$. If $Y$ is affine, we simply write $\equiv$.
\item Given a proper morphism $f \colon X \to Z$ of Noetherian schemes, we define the relative (numerical) Néron-Severi group $N^1(X/Z)$ as $(\Pic(X)/\equiv_Z) \otimes \mathbb{R}$, which is finite-dimensional (it follows from \cite[Lemma 7.6]{CJM22}), and its dimension $\rho(X/Z)$ is the relative Picard rank of $f$.
We denote by $N_1(X/Z)$ the dual vector space of $N^1(X/Z)$.
\item Given a proper morphism $f \colon X \to Z$ of Noetherian schemes, we  define the \emph{Mori cone} $\NE(X/Z)_{\mathbb{R}}$ as the closure in $N_1(X/Z)_{\mathbb{R}}$ of the cone generated by $\mathbb{R}$-linear positive combination of curves $C$ over $Z$.
\item If $X$ is a scheme, the canonical sheaf is defined as the lowest non-zero cohomology sheaf of the normalised dualising complex: $\omega_X \coloneqq \mathcal{H}^{-\dim X}(\omega_X^{\bullet})$. 
If $X$ is normal, then $\omega_X$ is a reflexive sheaf of rank 1, and we denote by $K_X$ a Weil divisor such that $\omega_X \cong \mathcal{O}_X(K_X)$. For the singularities of the MMP (such as terminal or klt), we refer to \cite{kk-singbook}.
\item A \emph{Mori fibre space} is a contraction $X \to Z$ over $S$ where $X$ has $\mathbb{Q}$-factorial terminal singularities, $\dim(Z)< \dim(X)$, $-K_X$ is ample over $Z$ and $\rho(X/Z)=1$.
A \emph{minimal model} is a $\mathbb{Q}$-factorial terminal normal projective scheme $X$ over $S$ such that $K_X$ is nef over $S$.
\end{itemize}

\subsection{Some good properties of the integers} \label{sec: integers}

In this section, we define a number of favourable properties that $S$ can have and then recall which of them are known to be satisfied in the case $S = \Spec(\mathbb{Z})$. Consider the following possible properties of $S$ (resp.~of $R$ if $S = \Spec(R)$ is affine):

\begin{enumerate}
\item[($M$)] The \'etale fundamental group $\pi_1(S)$ of $S$ is trivial. \label{item:Minkowski} 
\smallskip
\item[(\ensuremath{B_i})] The Brauer group ${\rm Br}(S)$ of $S$ has trivial $i$-torsion.
\label{item:Brauer}
\smallskip
\item[(\ensuremath{PID_i})] Every locally free sheaf of rank $i$ on $S$ is isomorphic to $\mathcal{O}_S^{\oplus i}$.
 \label{item:PID}
 \smallskip
\item[(\ensuremath{AV_i})] 
There exists no non-trivial Abelian variety of relative dimension $\leq i$ over $S$.
\label{item:AbrashkinFontaine} 
\smallskip
\item[(\ensuremath{h_i^{p,q}})] If $X$ is smooth and proper over $R$, then $h^{p,q}(X_K) = 0$ for $p \neq q$ and $p + q \leq i$.
\label{item:HodgeNumbers}
\smallskip
\end{enumerate}

\begin{remark} If $S$ is affine, then any \ref{item:PID} for $i>0$ is equivalent to $R$ being a PID, by the structure theorem for modules over PIDs. In particular, \ref{item:Minkowski} implies every \ref{item:PID} for rings of integers in number fields, because their class group is finite.
Most other implications between the first three properties are known to fail in general: 
\begin{enumerate}
\item By \cite[p.477]{Yamamura}, the ring of integers of $\mathbb{Q}(\sqrt{1609})$ satisfies 
\ref{item:PID} and \ref{item:Brauer} for all $i$, but does not satisfy \ref{item:Minkowski}.
\item By \autoref{cor: Minkowski} and \autoref{cor: Brauer_group_integers} below, the ring of integers of $\mathbb{Q}(\sqrt{5})$ satisfies \ref{item:Minkowski} (and  thus every \ref{item:PID}), but not \Bref{2}.
\item The curve $S= \mathbb{P}^1_{\mathbb{C}}$ satisfies all \ref{item:Brauer} and \ref{item:Minkowski}, but not \PIDref{1}.
\end{enumerate}
\end{remark}

\begin{remark} \label{rmk: AF_implies_mixed}
Note that for any $i \geq 1$, the properties \ref{item:AbrashkinFontaine}
and \ref{item:HodgeNumbers} force $S$ to be of mixed characteristic, for otherwise there are constant families over $S$.
Even relaxing non-trivial to non-isotrivial would not really fix this issue, since there are non-isotrivial families of Abelian varieties even over $\mathbb{P}^1_k$, where $k$ is an algebraically closed field of positive characteristic \cite{MR81, Ros-Schroer}.
\end{remark}

\begin{example}
One interesting example of a (non-affine) excellent Dedekind scheme $S$ with perfect residue fields and satisfying \ref{item:Minkowski}, \Bref{i}, and \PIDref{i} aside from certain rings of integers is the punctured spectrum $S = \Spec (\mathbb{C}[[x,y]]) \setminus \left\{(x,y) \right\}$:

Indeed, $S$ satisfies \ref{item:Minkowski} and \Bref{i} by purity (for the \'etale fundamental group \cite[Tag 0BMA]{stacks-project} and the Brauer group \cite[Theorem 3.7.1.(i)]{brauergroupbook}, respectively) and since $\Spec (\mathbb{C}[[x,y]])$ is \'etale simply connected with trivial Brauer group. Moreover, all locally free sheaves on $S$ are trivial, because they extend to reflexive and hence free sheaves on the local ring $\mathbb{C}[[x,y]]$.
\end{example}

Minkowski's theorem (see \cite[Theorem III.2.18]{Neu99}) on the non-existence of unramified extensions of $\mathbb{Q}$ can be rephrased as follows:

\begin{theorem}[Minkowski]
The ring $R = \mathbb{Z}$ satisfies \ref{item:Minkowski}.
\end{theorem}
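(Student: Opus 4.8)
The plan is to rewrite the triviality of $\pi_1(\Spec \mathbb{Z})$ as an arithmetic non-existence statement and then feed it into the classical geometry-of-numbers bound on discriminants. First I would invoke the Grothendieck--Galois correspondence: since $\Spec \mathbb{Z}$ is connected and normal, any connected finite étale cover is of the form $\Spec \mathcal{O}_L$ for a number field $L/\mathbb{Q}$, and the étale hypothesis is exactly the condition that $\mathbb{Z} \hookrightarrow \mathcal{O}_L$ be unramified at every finite prime. Hence property \ref{item:Minkowski} for $\mathbb{Z}$ is equivalent to the assertion that the only number field unramified at all finite primes of $\mathbb{Q}$ is $\mathbb{Q}$ itself; equivalently, the maximal extension of $\mathbb{Q}$ unramified at all finite places is trivial.

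Second, I would reduce this to a numerical inequality via the discriminant. By Dedekind's criterion a rational prime $p$ ramifies in $L$ if and only if $p \mid d_L$, so $L/\mathbb{Q}$ is unramified at all finite primes precisely when $|d_L| = 1$. It therefore suffices to prove that $|d_L| > 1$ for every number field $L$ with $[L:\mathbb{Q}] = n > 1$.

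This final inequality is the substance of Minkowski's theorem and is the only real obstacle; the two preceding reductions are formal. I would obtain it from the Minkowski lattice-point bound: every ideal class of $\mathcal{O}_L$ contains an integral ideal of norm at most $\left(\tfrac{4}{\pi}\right)^{r_2}\tfrac{n!}{n^n}\sqrt{|d_L|}$, where $r_2$ denotes the number of complex places. Since any such ideal has norm $\geq 1$, this forces
\[
\sqrt{|d_L|} \;\geq\; \left(\tfrac{\pi}{4}\right)^{r_2}\frac{n^n}{n!} \;\geq\; \left(\tfrac{\pi}{4}\right)^{n/2}\frac{n^n}{n!} \eqqcolon b_n,
\]
using $r_2 \leq n/2$ and $\pi/4 < 1$. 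An elementary induction, checking $b_2 = \pi/2 > 1$ and that $b_{n+1}/b_n = \sqrt{\pi/4}\,(1+\tfrac1n)^n \geq 2\sqrt{\pi/4} > 1$, then shows $b_n > 1$ for all $n \geq 2$, whence $|d_L| > 1$. This is precisely the argument recorded in \cite[Theorem III.2.18]{Neu99}, which I would cite directly if a self-contained geometry-of-numbers digression were undesirable.
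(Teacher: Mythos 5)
Your proof is correct and follows essentially the same route as the paper, which offers no separate argument but observes that property $(M)$ is a rephrasing of Minkowski's non-existence theorem for unramified extensions of $\mathbb{Q}$ and cites \cite[Theorem III.2.18]{Neu99} for it. Your two formal reductions (Grothendieck--Galois correspondence, then Dedekind's discriminant criterion) and the geometry-of-numbers bound $|d_L|>1$ for $[L:\mathbb{Q}]>1$ are exactly the content of that citation, merely spelled out.
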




There is a well-known criterion for property \ref{item:Brauer} in the case where $K$ is a global field, which follows from the following result \cite[Theorem 3.6.1.ii]{brauergroupbook}:

\begin{theorem}[Grothendieck] \label{thm: BrauerGrothendieck}
There is an exact sequence
$$
0 \to \Br(S) \to \Br(K) \to \bigoplus_{x \textrm{ closed}} H^1\big(k(x), \mathbb{Q}/\mathbb{Z} \big) \to H^3_{\et}(S,\mathbb{G}_m) \to H^3_{\et}(K,\mathbb{G}_m) 
$$
\end{theorem}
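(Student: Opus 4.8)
The plan is to realise this five-term sequence as the long exact cohomology sequence of the \emph{divisor sequence} of étale sheaves on $S$, once one knows that the generic point contributes no higher direct images. Write $j \colon \eta = \Spec(K) \hookrightarrow S$ for the inclusion of the generic point and, for each closed point $x$, write $i_x \colon \Spec(k(x)) \hookrightarrow S$. Since $S$ is regular of dimension $1$, there is a short exact sequence of étale sheaves
\[
0 \to \mathbb{G}_{m,S} \to j_* \mathbb{G}_{m,\eta} \to \bigoplus_{x \textrm{ closed}} i_{x*}\mathbb{Z} \to 0,
\]
whose cokernel is the sheaf of Weil divisors supported on the closed points; this is just the valuation-theoretic statement that a rational function is a unit away from its divisor of zeros and poles. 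I would take the long exact sequence in $H^\bullet_{\et}(S,-)$ attached to this short exact sequence as the backbone of the argument.

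The key technical step, which I expect to be the main obstacle, is the purity vanishing $R^q j_* \mathbb{G}_{m,\eta} = 0$ for all $q \geq 1$. By passage to the limit, the stalk of $R^q j_* \mathbb{G}_m$ at a geometric point over a closed point $x$ computes $H^q_{\et}\big(\Frac(\mathcal{O}^{\mathrm{sh}}_{S,x}), \mathbb{G}_m\big)$, i.e.\ the $\mathbb{G}_m$-cohomology of the fraction field $F^{\mathrm{sh}}$ of a strictly henselian discrete valuation ring. For $q = 1$ this is the Picard group of a field, hence $0$. For $q \geq 2$ one uses that $F^{\mathrm{sh}}$ has separably closed residue field, which is moreover \emph{algebraically closed} because the $k(x)$ are assumed perfect; by Lang's theorem such a field is $C_1$, so $\Br(F^{\mathrm{sh}}) = 0$, and its cohomological dimension is $\leq 1$, killing all $H^{\geq 2}$. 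This is the arithmetic heart of the statement and the only place where the regularity of $S$ and the perfectness hypothesis genuinely enter; the delicate point is the residue-characteristic part of $\Br(F^{\mathrm{sh}})$, handled precisely by the $C_1$ property. Granting the vanishing, the Leray spectral sequence for $j$ degenerates and yields $H^n_{\et}(S, j_*\mathbb{G}_m) \cong H^n_{\et}(K, \mathbb{G}_m)$ for all $n$.

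It then remains to compute the cohomology of the skyscrapers, $H^n_{\et}(S, i_{x*}\mathbb{Z}) = H^n\big(\Gal(\overline{k(x)}/k(x)), \mathbb{Z}\big)$: here $H^0 = \mathbb{Z}$, while $H^1 = \Hom_{\mathrm{cont}}(\Gal, \mathbb{Z}) = 0$ since a profinite group admits no nontrivial continuous homomorphism to $\mathbb{Z}$, and the sequence $0 \to \mathbb{Z} \to \mathbb{Q} \to \mathbb{Q}/\mathbb{Z} \to 0$ together with $H^{\geq 1}(-,\mathbb{Q}) = 0$ identifies $H^2 \cong H^1(k(x), \mathbb{Q}/\mathbb{Z})$. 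Feeding all of this into the long exact sequence, its segment in the relevant degrees reads
\[
\Pic(K) \to \bigoplus_x H^1(k(x),\mathbb{Z}) \to \Br(S) \to \Br(K) \to \bigoplus_x H^2(k(x),\mathbb{Z}) \to H^3_{\et}(S,\mathbb{G}_m) \to H^3_{\et}(K,\mathbb{G}_m).
\]
Substituting $\Pic(K) = 0$, $H^1(k(x),\mathbb{Z}) = 0$, and $H^2(k(x),\mathbb{Z}) \cong H^1(k(x),\mathbb{Q}/\mathbb{Z})$ produces exactly the asserted sequence, with the map $\Br(K) \to \bigoplus_x H^1(k(x),\mathbb{Q}/\mathbb{Z})$ being the collection of residue (ramification) maps. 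As a consistency check, the injectivity of $\Br(S) \hookrightarrow \Br(K)$, which drops out of $H^1(k(x),\mathbb{Z}) = 0$, recovers the Auslander--Goldman theorem that the Brauer group of a regular integral scheme injects into that of its function field.
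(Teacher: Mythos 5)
Your argument is correct and is essentially the paper's own: the paper gives no inline proof of \autoref{thm: BrauerGrothendieck}, citing it as \cite[Theorem 3.6.1.ii]{brauergroupbook}, and the proof there is precisely your route --- the divisor sequence $0 \to \mathbb{G}_m \to j_*\mathbb{G}_{m,\eta} \to \bigoplus_x i_{x*}\mathbb{Z} \to 0$ on the regular one-dimensional $S$, the vanishing of $R^qj_*\mathbb{G}_m$, and the computation $H^2(k(x),\mathbb{Z}) \cong H^1(k(x),\mathbb{Q}/\mathbb{Z})$ via $0 \to \mathbb{Z} \to \mathbb{Q} \to \mathbb{Q}/\mathbb{Z} \to 0$. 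One step is slightly glossed: Lang's theorem gives the $C_1$ property for the \emph{complete} discretely valued field, not directly for $\Frac(\mathcal{O}^{\mathrm{sh}}_{S,x})$, which is henselian but not complete; to transfer $\Br = 0$ and the bound $\mathrm{cd} \leq 1$ down from the completion one needs an approximation argument (Greenberg/Artin), and this is where the paper's standing excellence hypothesis on $S$ enters, alongside the perfectness of the residue fields that you did invoke.
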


\begin{corollary} \label{cor: Brauer_group_integers}
    If $R$ is the ring of integers in a number field $K$ and $r$ is the number of real Archimedean places of $K$, then:
    \begin{enumerate}
    \item $R$ satisfies \Bref{i} for all $i \neq 2$.
    \item $R$ satisfies \Bref{2} if and only if $r \leq 1$.
    \end{enumerate}
\end{corollary}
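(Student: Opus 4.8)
The plan is to compute $\Br(S)$ for $S=\Spec(R)$ outright and then read off its torsion; both assertions are immediate once one knows that $\Br(S)$ is an elementary abelian $2$-group of rank $\max(r-1,0)$. The two inputs are Grothendieck's exact sequence (\autoref{thm: BrauerGrothendieck}) together with the fundamental Albert--Brauer--Hasse--Noether exact sequence of global class field theory (see e.g.\ \cite{Neu99}).

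First I would use \autoref{thm: BrauerGrothendieck} to identify $\Br(S)$ with the subgroup of $\Br(K)$ consisting of classes whose residue vanishes at every closed point $x\in S$. Since $R$ is the ring of integers in a number field, the closed points of $S$ are the finite places $v$ of $K$ and each residue field $k(x)$ is finite, so $H^1(k(x),\mathbb{Q}/\mathbb{Z})\cong\mathbb{Q}/\mathbb{Z}$ and the residue map is, under this identification, the local invariant $\mathrm{inv}_v\colon \Br(K_v)\xrightarrow{\sim}\mathbb{Q}/\mathbb{Z}$. Thus $\Br(S)=\{\alpha\in\Br(K) : \mathrm{inv}_v(\alpha)=0 \text{ for every finite } v\}$.

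Next I would invoke the reciprocity sequence $0\to\Br(K)\to\bigoplus_v\Br(K_v)\xrightarrow{\ \sum_v\mathrm{inv}_v\ }\mathbb{Q}/\mathbb{Z}\to 0$, where $v$ ranges over all places, $\Br(K_v)\cong\mathbb{Q}/\mathbb{Z}$ for finite $v$, $\Br(K_v)\cong\frac12\mathbb{Z}/\mathbb{Z}$ for real $v$, and $\Br(K_v)=0$ for complex $v$. Combining this with the previous step, a class in $\Br(S)$ is determined by its archimedean invariants, which are supported on the $r$ real places and must sum to zero. Hence $\Br(S)$ is isomorphic to the kernel of the summation map $(\mathbb{Z}/2)^r\to\mathbb{Z}/2$, that is $\Br(S)\cong(\mathbb{Z}/2)^{\max(r-1,0)}$.

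Finally, since $\Br(S)$ is killed by $2$, the only prime dividing its exponent is $2$; in particular its $i$-torsion is trivial for every odd $i$ and no $\ell$-torsion survives for any prime $\ell\neq 2$, so the sole condition that can fail is the one at $i=2$, which is (1). For (2), the $2$-torsion is all of $\Br(S)\cong(\mathbb{Z}/2)^{\max(r-1,0)}$, and this vanishes precisely when $r\leq 1$. The step requiring the most care is the bookkeeping at the archimedean places in the reciprocity sequence: it is exactly the real places that contribute the factors $\mathbb{Z}/2$, and matching Grothendieck's residue maps with the finite-place invariants—so that the global constraint becomes ``the real invariants sum to zero''—is where the content of part (2) resides.
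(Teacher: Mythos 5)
Your proof is correct and follows essentially the same route as the paper: both combine Grothendieck's exact sequence (\autoref{thm: BrauerGrothendieck}) with the Albert--Brauer--Hasse--Noether reciprocity sequence to identify $\Br(R)$ with the kernel of the summation map on the real-place invariants, yielding $\Br(R)\cong(\mathbb{Z}/2\mathbb{Z})^{\max(r-1,0)}$, from which both claims follow. Your explicit remark about matching Grothendieck's residue maps at finite places with the local invariants, and your handling of the $r=0$ case via $\max(r-1,0)$, are slightly more careful than the paper's write-up but not a different argument.
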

\begin{proof}
The short exact sequence of global class field theory
$$ 0 \to \Br(K) \to \bigoplus_v \Br(K_v) \xrightarrow{\text{inv}} \mathbb{Q}/\mathbb{Z} \to 0  ,$$
    where $v$ runs among all the places of $K$ together with \autoref{thm: BrauerGrothendieck} shows that 
    $$\Br(R) \simeq  \ker\big(\bigoplus_{v \mid \infty} \Br(K_v) \xrightarrow{\text{inv}} \mathbb{Q}/\mathbb{Z}\big).$$
    As for a real Archimedean place $v$ we have $\Br(K_v) \simeq \mathbb{Z}/2\mathbb{Z}$ and $\text{inv}$ is non-vanishing it follows that $\Br(R)\simeq (\mathbb{Z}/2\mathbb{Z})^{\oplus r-1}$.
\end{proof}

The fact that $\mathbb{Z}$ satisfies \ref{item:AbrashkinFontaine} for all $i$ is due to Abrashkin and Fontaine (see \cite{Fon85}, and \cite{Abr90}). Since Abelian varieties have non-zero first Betti number, we have $\Hodgeref{1} \Rightarrow \AFref{i}$ for all $i$, which is why we can state the results of Abrashkin and Fontaine as follows:

\begin{theorem}[Abrashkin, Fontaine] \label{thm: abrashkin}
The following hold:
\begin{enumerate}
\item The rings $R$ of integers of $\mathbb{Q}(\sqrt{-1}), \mathbb{Q}(\sqrt{-3})$, and $\mathbb{Q}(\sqrt{5})$ satisfy \Hodgeref{2}.
\item The ring $R = \mathbb{Z}$ satisfies \Hodgeref{3} (resp. \Hodgeref{4}, assuming the generalized Riemann hypothesis). 
\end{enumerate}
\end{theorem}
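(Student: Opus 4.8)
The plan is to reprove the theorem by the method of Fontaine and Abrashkin, combining $\ell$-adic Hodge theory with lower bounds for discriminants of number fields; I describe the strategy and indicate where the numerical restrictions and the appeal to GRH enter. Let $X\to\Spec(R)$ be smooth and proper with $R=\mathcal{O}_K$ one of the listed rings of integers, and suppose towards a contradiction that $h^{a,b}(X_K)\neq 0$ for some $a\neq b$ with $a+b=n$, where $n\le 2$ in case (1) and $n\le 3$ (resp.\ $n\le 4$) in case (2). I would fix an auxiliary rational prime $\ell$ and study the $\Gal(\overline{K}/K)$-representation $V=H^n_{\et}(X_{\overline{K}},\mathbb{Q}_\ell)$ together with its reduction $\overline{V}=H^n_{\et}(X_{\overline{K}},\mathbb{F}_\ell)$. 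The idea is that an off-diagonal Hodge number forces $V$ to have distinct Hodge--Tate weights while being ramified only above $\ell$, and that for a well-chosen small prime this ramification is too mild to be compatible with Odlyzko's discriminant bounds over the small base fields in the statement.

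Since $X\to\Spec(R)$ is smooth and proper, it has good reduction at every closed point of $S$. Smooth proper base change then shows that $V$ is unramified at all primes $v\nmid\ell$, while the crystalline comparison between $\ell$-adic \'etale and de Rham cohomology shows that $V$ is crystalline at all $v\mid\ell$, with Hodge--Tate weights equal to the multiset $\{a : h^{a,n-a}(X_K)\neq 0\}$, counted with multiplicity $h^{a,n-a}$. Hodge symmetry $h^{a,b}=h^{b,a}$ over the characteristic-zero field $K$ guarantees that both $a$ and $b=n-a$ occur, so $V$ carries at least two distinct Hodge--Tate weights in the interval $[0,n]$.

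Next I would choose $\ell$ so that $n\le\ell-2$, placing the Hodge--Tate weights in the Fontaine--Laffaille range. Fontaine--Laffaille theory then presents $\overline{V}$ by a filtered module whose filtration jumps are the Hodge--Tate weights of $V$; since $a\neq b$ lie in $[0,\ell-2]$ they remain distinct modulo $\ell$, so tame inertia above $\ell$ acts through fundamental characters of distinct exponents and $\overline{V}$ is genuinely ramified at $\ell$. The same theory bounds the wild ramification: the upper-numbering ramification filtration of the splitting field $L$ of $\overline{V}$ above $\ell$ vanishes beyond an exponent of the shape $1+n/(\ell-1)$, so the root discriminant $\delta_L$ is bounded above by an explicit constant depending only on $\ell$, $n$ and $K$, and not on $[L:K]$. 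I would then contradict this using Odlyzko's lower bounds on root discriminants: for a suitably small $\ell$, no field $L\supsetneq K$ realizing the forced ramification at $\ell$ can have root discriminant below the Fontaine--Laffaille cap, with the asymptotic Odlyzko bound eliminating the high-degree possibilities and explicit minimal-discriminant data the low-degree ones. This contradiction shows that every off-diagonal $h^{a,b}$ with $a+b\le i$ vanishes, which is \Hodgeref{i}.

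The crux---and the source of every restriction in the statement---is this numerical balance between the Fontaine--Laffaille upper bound and the Odlyzko lower bound. Choosing $\ell$ smaller sharpens the discriminant cap but shrinks the admissible range $n\le\ell-2$; the efficient choices are $\ell=5$ for $n\le 3$ over $\mathbb{Z}$, and the smallest admissible primes over the quadratic fields $\mathbb{Q}(\sqrt{-1})$, $\mathbb{Q}(\sqrt{-3})$, $\mathbb{Q}(\sqrt{5})$, which are singled out precisely because their root discriminants ($2$, $\sqrt{3}$, $\sqrt{5}$) are small enough to keep $\delta_L$ under the Odlyzko floor. Verifying the resulting inequalities requires not only the asymptotic Odlyzko bound but also the explicit minima of root discriminants in low degree (and their dependence on the archimedean places), together with a careful evaluation of the Fontaine--Laffaille different. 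Finally, reaching $n\le 4$ over $\mathbb{Z}$ forces the choice $\ell=7$, for which the unconditional Odlyzko bound is too weak to undercut the Fontaine--Laffaille cap; only the sharper GRH-conditional bound suffices, which is exactly why the last case is stated under the generalized Riemann hypothesis.
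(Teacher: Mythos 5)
The paper does not actually prove this theorem: it is quoted with citations to Fontaine and Abrashkin, so the relevant comparison is with the original arguments. Your outline correctly reconstructs their skeleton --- smooth proper base change to get unramifiedness away from $\ell$, the crystalline comparison at $\ell$, the Fontaine--Laffaille range $n\le\ell-2$, Fontaine's bound on the upper-numbering ramification of the splitting field $L$ of $\overline{V}$ and hence on its root discriminant, and the Odlyzko bounds (with GRH needed precisely for the $n\le 4$ case over $\mathbb{Z}$). The identification of why the three quadratic fields appear (small root discriminant) is also right in spirit, though note one glossed technicality: Fontaine--Laffaille theory as you invoke it requires the base to be absolutely unramified at $\ell$, so over $\mathbb{Q}(\sqrt{5})$ one cannot take $\ell=5$, and the admissible choices of $\ell$ are more constrained than ``the smallest prime with $n\le\ell-2$.''

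The genuine gap is at the crux, in the step where you claim the contradiction. From ``$\overline{V}$ is ramified at $\ell$'' plus the root-discriminant cap you conclude that no field $L\supsetneq K$ can realize the forced ramification --- but this is false: the cyclotomic field $K(\zeta_\ell)$ is ramified at $\ell$ and has root discriminant less than $\ell$ (e.g.\ $5^{3/4}\approx 3.34$ for $\mathbb{Q}(\zeta_5)$), comfortably below any Odlyzko floor. Indeed the mod-$\ell$ cyclotomic character itself has exactly the ramification profile you describe, so ramification at $\ell$ cannot by itself contradict the discriminant bounds. What the Fontaine--Laffaille cap versus Odlyzko comparison actually yields is that $L$ is forced into a small, essentially cyclotomic extension of $K$, whence the Jordan--H\"older factors of $\overline{V}$ are powers $\chi^i$ of the mod-$\ell$ cyclotomic character --- and a representation with Hodge--Tate weights $a\neq b$ is a priori perfectly compatible with reduction $\chi^a\oplus\chi^b$ plus extensions. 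The original proofs need a further arithmetic step that your sketch omits entirely: one evaluates Frobenius at a small auxiliary prime $q$, where Deligne's purity says every eigenvalue on $H^n$ is a Weil number of weight $n$ (all archimedean absolute values $q^{n/2}$), while the cyclotomic description of $\overline{V}$ forces these eigenvalues to be congruent to powers of $q$ modulo (a prime over) $\ell$; an elementary lemma on algebraic integers with all conjugates of prescribed small absolute value then produces the contradiction, and it is this weight argument --- not the discriminant bound --- that finally kills the off-diagonal Hodge numbers. Without this step the proposal does not conclude.
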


\begin{remark}
We do not claim that this list is complete. For example, very recently, Schoof \cite[Theorem 1.1]{Sch25} proved that the ring of integers of a real quadratic field $\mathbb{Q}(\sqrt{\Delta})$ satisfies \AFref{i} for all $i$ if and only if $\Delta \leq 21$.
\end{remark}
\begin{remark}
    There exist explicit examples of abelian varieties over number fields with good reduction everywhere.
    For example, in \cite{Set81}, Setzer has shown that the elliptic curve with Weierstra\ss ~equation $y^2 +\sqrt{6}xy-y = x^3 - (2+\sqrt{6}x)$ over $\mathbb{Q}(\sqrt{6})$ has good reduction everywhere.
\end{remark}

\begin{corollary} \label{cor: Minkowski}
The following rings satisfy \ref{item:Minkowski}, \Bref{i}, \PIDref{i}, and \AFref{i} for all $i$: $\mathbb{Z},\mathbb{Z}[i]$, and $\mathbb{Z}[\frac{1 + \sqrt{-3}}{2}]$.
\end{corollary}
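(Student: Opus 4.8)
The plan is to check the four properties one at a time, since for these three very specific rings each reduces to a result already established above. Throughout, write $R$ for one of $\mathbb{Z}$, $\mathbb{Z}[i]$, $\mathbb{Z}[\frac{1+\sqrt{-3}}{2}]$ and $K$ for its fraction field, so that $K\in\{\mathbb{Q},\mathbb{Q}(\sqrt{-1}),\mathbb{Q}(\sqrt{-3})\}$. Three of the four properties are immediate citations. For \Bref{i}: all three fields have at most one real place ($\mathbb{Q}$ has exactly one, the two imaginary quadratic fields have none), so $r\le 1$ and \autoref{cor: Brauer_group_integers} gives $\Br(R)=0$; in particular every \Bref{i} holds. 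For \PIDref{i}: each of the three rings is a (Euclidean, hence) principal ideal domain, so by the equivalence recorded in the remark at the start of this subsection (the structure theorem for finitely generated modules over a PID) every \PIDref{i} holds. For \AFref{i}: by \autoref{thm: abrashkin} the rings of integers of $\mathbb{Q}(\sqrt{-1})$ and $\mathbb{Q}(\sqrt{-3})$ satisfy \Hodgeref{2}, and $\mathbb{Z}$ satisfies \Hodgeref{3}; in every case \Hodgeref{1} holds a fortiori. Since a nontrivial abelian scheme has generic fibre with $h^{1,0}(X_K)=\dim X_K\ge 1$, property \Hodgeref{1} rules out abelian schemes of every relative dimension, i.e.\ it implies \AFref{i} for all $i$, exactly as observed before \autoref{thm: abrashkin}.

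The one genuinely nontrivial point is property \ref{item:Minkowski}, the triviality of $\pi_1(\Spec R)$. For $R=\mathbb{Z}$ this is Minkowski's theorem, already stated. For $R=\mathbb{Z}[i]$ and $R=\mathbb{Z}[\frac{1+\sqrt{-3}}{2}]$ I would argue by a root-discriminant estimate. Replacing a hypothetical cover by its Galois closure (which remains unramified), it suffices to rule out a nontrivial finite extension $L/K$ unramified at all finite primes. As $K$ is imaginary quadratic, $L$ is totally complex of degree $n=[L:\mathbb{Q}]=2[L:K]\ge 4$, and the tower formula $|d_L|=|d_K|^{[L:K]}\,N_{K/\mathbb{Q}}(\mathfrak{d}_{L/K})$ together with $\mathfrak{d}_{L/K}=(1)$ gives $|d_L|^{1/n}=|d_K|^{1/2}$, which equals $2$ for $\mathbb{Q}(\sqrt{-1})$ and $\sqrt{3}$ for $\mathbb{Q}(\sqrt{-3})$.

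On the other hand, Minkowski's inequality for a totally complex field of degree $n$ reads $|d_L|^{1/n}\ge \left(\frac{n^n}{n!}\right)^{2/n}\frac{\pi}{4}$, and the right-hand side is increasing in $n$ and already exceeds $2.5$ at $n=4$. This contradicts $|d_L|^{1/n}\le 2$, forcing $L=K$ and hence $\pi_1(\Spec R)=1$. The hard part is precisely this last step: the other three properties are direct appeals to earlier results, whereas \ref{item:Minkowski} for the two imaginary quadratic fields requires the quantitative Minkowski bound plus the observation that unramifiedness makes the root discriminant of any cover equal to that of the base, so that the growth of the bound with the degree closes the argument. The only points needing care are the monotonicity of $\left(\frac{n^n}{n!}\right)^{2/n}$ and the numerical check that its value at $n=4$ genuinely beats the base root discriminants $2$ and $\sqrt{3}$.
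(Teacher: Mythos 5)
Your proof is correct. The paper actually states \autoref{cor: Minkowski} without any proof: it is intended as a direct assembly of the surrounding results, namely Minkowski's theorem for \ref{item:Minkowski} over $\mathbb{Z}$, \autoref{cor: Brauer_group_integers} for \Bref{i} (all three fields have $r \leq 1$ real places), the remark that \ref{item:Minkowski} implies every \PIDref{i} for rings of integers via finiteness of the class group, and \autoref{thm: abrashkin} combined with the observation $\Hodgeref{1} \Rightarrow \AFref{i}$ made just before it. Your handling of \Bref{i}, \PIDref{i}, and \AFref{i} matches this intended assembly exactly (your appeal to Euclideanity for \PIDref{i} is a harmless variant of the paper's route through \ref{item:Minkowski} and the class group). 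Where you add genuine content is \ref{item:Minkowski} for $\mathbb{Z}[i]$ and $\mathbb{Z}[\frac{1+\sqrt{-3}}{2}]$: the paper cites nothing for this and implicitly treats it as classical, whereas you supply the complete argument, and it is sound. A connected finite \'etale cover of $\Spec(\mathcal{O}_K)$ is the normalization in a finite extension $L/K$ unramified at all finite primes, so $\mathfrak{d}_{L/K} = (1)$ and the tower formula forces $|d_L|^{1/n} = |d_K|^{1/2}$, which is $2$, respectively $\sqrt{3}$; since $K$ is imaginary quadratic, all archimedean places of $L$ are complex, and the Minkowski bound $|d_L|^{1/n} \geq \frac{\pi}{4}\left(\frac{n^n}{n!}\right)^{2/n}$ evaluates at $n=4$ to $\frac{\pi}{4}\sqrt{256/24} \approx 2.57 > 2$, with the sequence $\frac{1}{n}\log\frac{n^n}{n!} = \frac{1}{n}\sum_{k=1}^{n}\log\frac{n}{k}$ increasing (to $1$), so all degrees $n \geq 4$ are excluded at once. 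Two minor remarks: the passage to the Galois closure is unnecessary, since the root-discriminant computation applies to any $L$; and note that this same quantitative bound is what underlies Minkowski's theorem for $\mathbb{Z}$ itself, so your proof in fact makes the corollary self-contained in a way the paper does not.
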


\subsection{Picard schemes}
We will need some facts about Picard schemes of smooth and proper schemes over $S$.

\begin{lemma} \label{lem: global_section}
Assume that $S$ satisfies \ref{item:Minkowski}.
    Let $f \colon X \to S$ be a proper smooth morphism, where $X$ is integral. 
    Then, $f^{\sharp}:\mathcal{O}_S \to f_* \mathcal{O}_X$ is an isomorphism. 
    In particular, all fibers of $f$ are geometrically integral.
\end{lemma}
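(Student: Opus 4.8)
The plan is to reduce the statement to the triviality of finite étale covers of $S$ guaranteed by \ref{item:Minkowski}, by analysing the Stein factorisation of $f$.

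First I would record the formal reductions. Since $f$ is smooth it is flat, and as $X$ is integral and nonempty its image is irreducible, open (by flatness) and closed (by properness), hence all of $S$; thus $f$ is faithfully flat and surjective with nonempty fibres. The sheaf $\mathcal{A} \coloneqq f_*\mathcal{O}_X$ is coherent because $f$ is proper and $S$ is Noetherian. Flatness of $f$ forces $\mathcal{O}_X$ to be $\mathcal{O}_S$-torsion-free (multiplication by any nonzero $a \in \mathcal{O}_S$ is injective on a flat module over the domain $\mathcal{O}_S$), so $\mathcal{A}$ is torsion-free; over the Dedekind scheme $S$ a coherent torsion-free sheaf is locally free, so $\mathcal{A}$ is a finite locally free $\mathcal{O}_S$-algebra, of constant rank $d$ since $S$ is connected. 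Its generic fibre is $\mathcal{A}\otimes_{\mathcal{O}_S} K = H^0(X_K,\mathcal{O}_{X_K}) \eqqcolon K'$, and since $X_K$ is integral, proper and geometrically reduced (being smooth over $K$), the algebra $K'$ is a finite separable field extension of $K$ of degree $d$.

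Next I would pass to the Stein factorisation $X \xrightarrow{g} S' \coloneqq \Spec_S \mathcal{A} \xrightarrow{h} S$, so that $h$ is finite, $g$ is proper and surjective with $g_*\mathcal{O}_X = \mathcal{O}_{S'}$, and $S'$ is integral with function field $K'$. The key step — and the main obstacle — is to show that $h$ is finite \emph{étale}. This is exactly where smoothness of $f$ enters: its fibres are geometrically regular, hence geometrically reduced, and for a proper flat morphism with geometrically reduced fibres the formation of $f_*\mathcal{O}_X$ commutes with arbitrary base change and the resulting finite morphism $S'\to S$ is étale (a standard consequence of cohomology and base change; see e.g.\ \cite{stacks-project}). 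Concretely, commuting with base change identifies the fibre of $h$ over $s$ with $\Spec H^0(X_s,\mathcal{O}_{X_s})$, which, $X_s$ being smooth over $k(s)$, is a finite étale $k(s)$-algebra whose rank equals the number of geometric connected components of $X_s$ and is $d$ for every $s$; a finite locally free morphism with étale fibres is étale. I should double-check that this base-change input applies in our generality (excellent Dedekind $S$, $X$ merely of finite type); alternatively, one can note that $h$ is generically étale, so its ramification is confined to finitely many closed points, and rule these out directly using the regularity of $X$ along $g^{-1}(s')$.

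Finally I would invoke \ref{item:Minkowski}: since $\pi_1(S)$ is trivial, every finite étale cover of $S$ is a disjoint union of copies of $S$; as $S'$ is connected (indeed integral) this forces $d=1$ and $h$ to be an isomorphism, whence $f^{\sharp}\colon \mathcal{O}_S \to f_*\mathcal{O}_X=\mathcal{O}_S$ is an isomorphism. For the last assertion, base change gives $H^0(X_s,\mathcal{O}_{X_s}) = k(s)$ for every $s \in S$, so each fibre $X_s$ is geometrically connected; being smooth over $k(s)$, the geometric fibre $X_{\bar s}$ is regular, hence normal, and a connected normal scheme is irreducible, so $X_{\bar s}$ is integral. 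Therefore every fibre of $f$ is geometrically integral.
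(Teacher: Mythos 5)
Your proof is correct and follows essentially the same route as the paper: pass to the Stein factorisation $X \to S' \to S$, observe that $S' \to S$ is finite \'etale, and conclude from the triviality of $\pi_1(S)$ together with the integrality (hence connectedness) of $S'$. The only difference is that where the paper simply cites \cite[Remarques (7.8.10)]{EGA_III_2} for \'etaleness of the Stein factorisation, you reprove that input by hand via cohomology and base change for proper flat morphisms with geometrically reduced fibres, and you also spell out the deduction of geometric integrality of the fibres (geometric connectedness from $H^0(X_s,\mathcal{O}_{X_s})=k(s)$ plus regularity of the geometric fibre), which the paper leaves implicit.
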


\begin{proof}
    Let $f \colon X \xrightarrow{g} \Spec  (f_* \mathcal{O}_X)\xrightarrow{h} S$ be the Stein factorisation of $f$. 
    As explained in \cite[Remarques (7.8.10)]{EGA_III_2}, $h$ is a finite \'etale morphism.
    As $\Gamma(X, \mathcal{O}_X)$ is integral and $S$ is \'etale simply connected, it follows that $f^{\sharp}$ is an isomorphism.
\end{proof}

By \cite{Raynaud}, the relative Picard functor behaves particularly well for morphisms with geometrically integral fibers over Dedekind domains, and we collect the corresponding results in the following:

\begin{proposition} \label{prop: pic}
Assume that $S$ satisfies \ref{item:Minkowski}. Let $f: X \to S$ be a proper smooth morphism, where $X$ is integral. Then, the following hold:
\begin{enumerate}
\item The \'etale sheafification of the relative Picard functor is representable by a group scheme $\Pic_{X/S}$ which is locally of finite type and separated over $S$ and satisfies the valuative criterion for properness.
\item The subsheaves $\Pic^{0}_{X/S} \subseteq \Pic_{X/S}$ (resp. $\Pic^{\tau}_{X/S} \subseteq \Pic_{X/S}$) of sections whose restriction to geometric fibers lies in the identity component (resp. the torsion component) are separated open subgroup schemes of $\Pic_{X/S}$ of finite type over $S$.
\item If $h^2(X_s,\mathcal{O}_{X_s}) = 0$ for all $s \in S$ closed, then $\Pic_{X/S}$ is smooth over $S$ and $\Pic^0_{X/S}$ is an Abelian variety of relative dimension $\frac{1}{2}b_1(X_K)$ over $S$.
\item If $b_1(X_K) = 0$, then $\Pic^{\tau}_{X/S}$ is finite over $S$.
\end{enumerate}
\end{proposition}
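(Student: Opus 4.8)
The plan is to reduce the whole statement to the structure theory of the relative Picard functor over a Dedekind base, the point being that the hypotheses, together with property \ref{item:Minkowski}, put us in exactly the situation where Raynaud's results apply. Since $f$ is smooth it is flat and of finite presentation, and because $S$ satisfies \ref{item:Minkowski}, \autoref{lem: global_section} gives $f_*\mathcal{O}_X=\mathcal{O}_S$ and the geometric integrality of every fibre. This is precisely the input for the representability theorem of \cite{Raynaud}: the \'etale sheafification of the relative Picard functor is representable by a group scheme $\Pic_{X/S}$ that is separated and locally of finite type over $S$. For the valuative criterion for properness I would argue by hand. Let $V$ be a discrete valuation ring with a map $\Spec V\to S$ and fraction field $L$, and let $\Spec L\to \Pic_{X/S}$ be a point. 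After an \'etale base change $V\to V'$ this is an honest line bundle $\mathcal{L}$ on $X_{L'}$; since $X_{V'}=X\times_S\Spec V'$ is smooth over the regular scheme $\Spec V'$ it is regular, so the closure of the corresponding Weil divisor is Cartier and extends $\mathcal{L}$ to a line bundle on $X_{V'}$. This produces the sought extension $\Spec V\to \Pic_{X/S}$, and uniqueness follows from separatedness. This proves (1).

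For (2), I would invoke the standard theory of the identity and torsion components. The identity component $\Pic^0_{X/S}$ is an open subgroup scheme, and it is quasi-compact, hence of finite type over $S$, by the boundedness of the family of invertible sheaves algebraically equivalent to zero. The subscheme $\Pic^\tau_{X/S}$ is likewise open, and it is of finite type because the N\'eron--Severi groups of the fibres are finitely generated, so their torsion subgroups contribute only finitely many bounded connected components; moreover $\Pic^\tau_{X/S}\to S$ is in fact proper by \cite{Raynaud}. Both are separated as subschemes of the separated scheme $\Pic_{X/S}$.

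For (3), deformation theory identifies the tangent space of $\Pic_{X/S}$ along the identity section over $s$ with $H^1(X_s,\mathcal{O}_{X_s})$ and places the obstructions to smoothness in $H^2(X_s,\mathcal{O}_{X_s})$. Hence $h^2(X_s,\mathcal{O}_{X_s})=0$ for all closed $s$ forces $\Pic_{X/S}$ to be smooth over $S$ (the vanishing propagates to the generic point by upper semicontinuity of $h^2$, a nonempty closed locus on an irreducible one-dimensional scheme necessarily meeting the closed points). Then $\Pic^0_{X/S}$ is smooth with geometrically connected fibres and, being open and closed in the proper scheme $\Pic^\tau_{X/S}$, is proper; a smooth proper group scheme with connected fibres is an abelian scheme. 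Its relative dimension equals $\dim\Pic^0_{X_K}$, and since the first $\ell$-adic cohomology of a smooth proper variety is the Tate module of the reduced identity component of its Picard scheme, with $\Pic^0_{X_K}$ already reduced, we obtain $\dim\Pic^0_{X_K}=\tfrac12 b_1(X_K)$.

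Finally, for (4), the $\ell$-adic Betti numbers are locally constant in the smooth proper family $f$ by the smooth proper base change theorem, so $b_1(X_s)=b_1(X_K)=0$ for every $s$. As the reduced identity component of the Picard scheme of each fibre has dimension $\tfrac12 b_1(X_s)=0$, each $\Pic^0_{X_s}$ is finite, so $\Pic^\tau_{X/S}$ is quasi-finite over $S$; combined with its properness from (2), it is finite. I expect the genuine obstacle to be (1): obtaining representability by an honest scheme, rather than merely an algebraic space, and the valuative criterion in the relative setting. This is exactly where the Dedekind hypothesis on $S$ and the geometric integrality of the fibres, the latter a consequence of \ref{item:Minkowski} via \autoref{lem: global_section}, are indispensable, so that the specialisation results of \cite{Raynaud} apply; the remaining parts are then formal consequences of the deformation theory of line bundles, of properness, and of the comparison between $b_1$ and $\dim\Pic^0$.
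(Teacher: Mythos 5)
Your proposal is correct and follows essentially the same route as the paper: Artin/Raynaud representability over a Dedekind base (with \ref{item:Minkowski} and \autoref{lem: global_section} supplying cohomological flatness and geometric integrality of fibres), openness and finite type of $\Pic^0_{X/S}$ and $\Pic^\tau_{X/S}$ from Raynaud and SGA6-type boundedness, deformation theory of line bundles for smoothness under $h^2(X_s,\mathcal{O}_{X_s})=0$, the identification $\dim \Pic^0_{X_K} = \tfrac12 b_1(X_K)$, and quasi-finite plus proper implies finite for (4). The only divergence is that where the paper cites \cite{NeronModels} for the valuative criterion in (1), you prove it by hand via regularity of $X_{V'}$ and extension of line bundles as closures of Weil divisors — which is precisely the standard argument underlying that reference, so the two proofs match in substance and in level of detail.
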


\begin{proof}
Since $f$ is proper, flat, and with geometrically reduced fibers, it is cohomologically flat in degree $0$, hence universally $\mathcal{O}$-connected by \autoref{lem: global_section}, so $f_{\et,*} \mathbb{G}_m = \mathbb{G}_m$. Since $f$ is smooth, it has sections \'etale locally, so the \'etale sheafification $R^1f_{\et,*} \mathbb{G}_m$ of the relative Picard functor is representable by an algebraic space $\Pic_{X/S}$ locally of finite presentation over $S$ \cite[Theorem 7.3]{Artin}.
In fact, as the fibers of $f$ are smooth, $\Pic_{X/S} \to S$ is separated \cite[Section 8.4, Theorem 3]{NeronModels} and it satisfies the valuative criterion of properness over $S$.
Since separated and locally of finite type group objects in the category of algebraic spaces over Dedekind schemes are schemes \cite[Theorem 3.3.1]{Raynaud}, we deduce that $\Pic_{X/S}$ is a group scheme locally of finite type over $S$.

Now, consider the subfunctors $\Pic^{0}_{X/S}$ and $\Pic^{\tau}_{X/S}$ which are open subgroup schemes of $\Pic_{X/S}$ by 
\cite[Proposition (3.3.6)]{Raynaud}. By \cite[Exp. XIII, Thm. 4.7]{SGA6}, both of these group schemes are of finite type over $S$. This is Claim (2).

Claim (3) follows from \cite[Section 8.4, Proposition 2]{NeronModels}, $b_1(X_K) =b_1(X_{s}) = 2 \dim \Pic_{X_{s}/k(s)} = 2 \dim (\Pic_{X/S})_{s}$, and Claim (2).

Claim (4) follows from Claims (1) and (2) and the equality $b_1(X_K)  = 2\dim(\Pic_{X/S})_{s}$.
\end{proof}

The most important fact on Picard schemes for us is the following simple observation:

\begin{theorem} \label{thm: full_Pic}
Assume that $S$ satisfies \ref{item:Minkowski}.
    Let $f \colon X \to S$ be a proper smooth morphism, where $X$ is integral.
    Assume that $b_1(X_K) = 0$ and $h^2(X_{s},\mathcal{O}_{X_{s}}) = 0$ for all $s \in S$ closed.
    Then, the Picard scheme of $X$ is constant, that is, $\Pic_{X/S} \simeq \underline{\Pic(X_{\overline{K}})}$.
\end{theorem}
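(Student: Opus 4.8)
The plan is to reduce everything to \autoref{prop: pic} together with the hypothesis \ref{item:Minkowski} that $S$ is \'etale simply connected. First I would observe that under the present assumptions $\Pic_{X/S}$ is \emph{\'etale} over $S$: by \autoref{prop: pic}(3) the vanishing $h^2(X_s,\mathcal{O}_{X_s})=0$ makes $\Pic_{X/S}\to S$ smooth, and since $b_1(X_K)=0$ the abelian scheme $\Pic^0_{X/S}$ has relative dimension $\tfrac12 b_1(X_K)=0$, hence is the zero section. As every fibral component of the group scheme $\Pic_{X/S}$ is a translate of $\Pic^0_{X/S}$, all fibres are $0$-dimensional, so $\Pic_{X/S}\to S$ is smooth of relative dimension $0$, i.e. \'etale; by \autoref{prop: pic}(1) it is moreover separated and satisfies the valuative criterion for properness. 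Since $S$ is regular and integral and \'etale morphisms preserve regularity, $\Pic_{X/S}$ is regular, hence (being locally Noetherian) locally connected, so its connected components are open, closed, and integral.

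The heart of the argument is to show that every connected component $Z$ of $\Pic_{X/S}$ is finite \'etale over $S$, and therefore isomorphic to $S$. Being open in the \'etale $S$-scheme $\Pic_{X/S}$, the component $Z$ is a regular integral scheme \'etale over $S$; its generic fibre is $\Spec K'$ for a finite separable extension $K'/K$, so $Z$ is a Dedekind scheme with function field $K'$. I would then compare $Z$ with the normalisation $S'$ of $S$ in $K'$, which is \emph{finite} over $S$ because $S$ is excellent: by the universal property of normalisation $Z\to S$ factors through $S'$, and since $Z\to S$ is \'etale the induced map $Z\to S'$ identifies the local rings (two discrete valuation rings of $K'$ dominating one another coincide) and is injective on points, hence is an open immersion. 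Consequently $Z$ is an open subscheme of the Noetherian scheme $S'$, so $Z\to S$ is of finite type; being also separated, quasi-finite and satisfying the valuative criterion for properness (which $Z$ inherits from $\Pic_{X/S}$, as any test lift has connected source landing in the component through the generic point), it is proper and quasi-finite, hence finite, and \'etale. Because $S$ satisfies \ref{item:Minkowski}, the connected finite \'etale cover $Z\to S$ is an isomorphism.

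Granting this, $\Pic_{X/S}$ is the disjoint union of its components, each mapping isomorphically to $S$, so it is a constant $S$-scheme; the group law makes the index set a group and the identification one of $S$-group schemes. Base-changing to the geometric generic point $\Spec\overline{K}$ identifies this index set with the set of components of $\Pic_{X_{\overline{K}}/\overline{K}}=\underline{\Pic(X_{\overline{K}})}$, that is, with the group $\Pic(X_{\overline{K}})$, yielding $\Pic_{X/S}\simeq\underline{\Pic(X_{\overline{K}})}$. The one genuinely non-formal step is the finiteness of each component: a priori an integral scheme \'etale over $S$ could be a proper ``missing-points'' open subscheme of $S'$, and it is precisely the valuative criterion for properness furnished by \autoref{prop: pic}(1) (via Raynaud's theory), together with the finiteness of normalisation coming from excellence of $S$, that rules this out and upgrades \'etale to \emph{finite} \'etale. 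Everything else is a formal consequence of $\pi_1(S)=1$.
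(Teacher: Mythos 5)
Your proof is correct, and its first half---\'etaleness of $\Pic_{X/S}$ via \autoref{prop: pic}(3) together with $\tfrac{1}{2}b_1(X_K)=0$, separatedness and the valuative criterion via \autoref{prop: pic}(1)---is exactly the paper's opening move. The second half genuinely diverges. The paper finishes sheaf-theoretically: it invokes the theorem of the base to see that the stalk $\Pic(X_{\overline{K}})$ is a finitely generated abelian group, so that $\Pic_{X/S}$ is an \'etale locally constant sheaf of abelian groups of finite type, and then quotes \stacksproj{0GIY} to conclude constancy from $\pi_1(S)=0$. You instead prove constancy scheme-theoretically, one connected component $Z$ at a time: you compare $Z$ with the normalization $S'$ of $S$ in the function field of $Z$ (finite over $S$ by excellence), exhibit $Z$ as an open subscheme of $S'$, and use the valuative criterion inherited from $\Pic_{X/S}$ to upgrade $Z \to S$ from \'etale to \emph{finite} \'etale, whence $Z \simeq S$ by \ref{item:Minkowski}. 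What your route buys: it is independent of the theorem of the base---no finite generation of $\Pic(X_{\overline{K}})$ is ever used, since each component is handled individually and becomes quasi-compact once trapped inside the Noetherian $S'$---and it unpacks the one-clause step ``satisfies the valuative criterion, hence is \'etale locally constant'' that the paper leaves implicit. The cost is length and the explicit appeal to excellence, which the standing hypotheses do provide. Two harmless glosses are worth flagging: first, calling $Z$ a Dedekind scheme tacitly assumes $\dim Z = 1$; over a base where the generic point is open (e.g.\ a DVR satisfying \ref{item:Minkowski}) a component could a priori consist of the generic point alone, but your own inherited valuative criterion excludes this, since the lift $\Spec R' \to \Pic_{X/S}$ lands in the clopen $Z$ and produces a point of $Z$ over a closed point of $S$ (and over $\Spec(\mathbb{Z})$ the case is vacuous, as $\Spec(\mathbb{Q})$ is not locally of finite type over $\mathbb{Z}$). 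Second, the asserted injectivity of $Z \to S'$ on points deserves its one-line justification: two points of $Z$ with the same image would give two extensions of $\Spec K' \to Z$ over the same discrete valuation ring, contradicting the separatedness furnished by \autoref{prop: pic}(1). Neither point affects correctness.
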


\begin{proof}
By \autoref{prop: pic} (3) and (4), $\Pic_{X/S} \to S$ is smooth of relative dimension $0$, hence \'etale. By \autoref{prop: pic} (1), it satisfies the valuative criterion of properness, hence is \'etale locally constant.
By the theorem of the base, the assumption $b_1(X_K) = 0$ implies that $\Pic_{X/S}$ is a sheaf of Abelian groups of finite type over $S$.
As $\pi_1(S) = 0$, \cite[Tag 0GIY]{stacks-project} implies that $\Pic_{X/S}$ is constant, as claimed.
\end{proof}

\subsection{Curves}
We recall the classification of smooth proper curves over $\Spec(\mathbb{Z})$ and see which properties of the previous section are used. The arithmetic genus of a curve $C$ over a field $k$ is $p_a(C)=h^1(C, \mathcal{O}_C).$

\begin{corollary} \label{cor: pa_constant}
Assume that $S$ satisfies \ref{item:Minkowski}. Let $f: X \to S$ be a proper smooth morphism, where $X$ is integral and $\dim(X) = 2$. Then, $p_a(X_s)$ is the same for all closed points $s \in S$.
\end{corollary}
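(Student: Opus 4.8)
The plan is to reduce the statement to two standard inputs: the constancy of the Euler characteristic in the flat family $f$, and the identification $h^0(X_s,\mathcal{O}_{X_s}) = 1$ supplied by property \ref{item:Minkowski}. First I would observe that since $\dim(X) = 2$ and $\dim(S) = 1$, the smooth morphism $f$ has relative dimension $1$, so each fibre $X_s$ is a smooth proper curve over the perfect field $k(s)$, and $p_a(X_s) = h^1(X_s,\mathcal{O}_{X_s})$ makes sense.

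Next, since $S$ satisfies \ref{item:Minkowski}, \autoref{lem: global_section} applies: it gives $f^{\sharp}\colon \mathcal{O}_S \to f_*\mathcal{O}_X$ an isomorphism and, crucially, that every fibre $X_s$ is geometrically integral. A geometrically integral proper scheme over a field $k$ has $H^0 = k$, so $h^0(X_s,\mathcal{O}_{X_s}) = 1$ for every closed point $s \in S$. This is the only place where the hypothesis \ref{item:Minkowski} enters.

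Then I would invoke the fact that, because $f$ is proper and flat (a smooth morphism is flat) with $\mathcal{O}_X$ flat over $S$, the function $s \mapsto \chi(X_s,\mathcal{O}_{X_s})$ is locally constant on $S$. Since $S$ is integral, hence connected, this Euler characteristic is in fact constant on all of $S$. Combining the two facts, for each closed $s$ one has $\chi(X_s,\mathcal{O}_{X_s}) = h^0(X_s,\mathcal{O}_{X_s}) - h^1(X_s,\mathcal{O}_{X_s}) = 1 - p_a(X_s)$, whence $p_a(X_s) = 1 - \chi(X_s,\mathcal{O}_{X_s})$ is independent of $s$.

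The argument is short, and the genuine content is concentrated in the uniform computation $h^0(X_s,\mathcal{O}_{X_s}) = 1$. Indeed, flatness alone only forces $\chi$ to be constant, which would leave room for $h^1$ (and hence $p_a$) to jump at special fibres if $h^0$ were allowed to jump with it; the role of \ref{item:Minkowski}, via \autoref{lem: global_section}, is precisely to rule this out by preventing the Stein factorisation of $f$ from being a nontrivial finite étale cover. So the main (and only) obstacle is establishing geometric integrality of the fibres, which the preceding results already settle.
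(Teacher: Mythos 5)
Your argument is correct and matches the paper's proof exactly: both use \autoref{lem: global_section} (via \ref{item:Minkowski}) to get geometric integrality of the fibres, hence $h^0(X_s,\mathcal{O}_{X_s})=1$, and combine this with the constancy of $\chi(X_s,\mathcal{O}_{X_s})$ in the flat family to conclude that $p_a(X_s)=1-\chi(X_s,\mathcal{O}_{X_s})$ is constant. Your closing remarks on why \ref{item:Minkowski} is the essential input are accurate, just more explicit than the paper's terse two-line proof.
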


\begin{proof}
    As $X$ is flat over $S$ we have that $\chi(X_s, \mathcal{O}_{X_s})$ is constant. By \autoref{lem: global_section}, the $X_{s}$ are geometrically integral, hence $h^0(X_s,\mathcal{O}_{X_s}) = 1$ and thus $p_a(X_s)$ is constant.
\end{proof}

\begin{lemma} \label{lem: smooth_curves_g0}
Assume that $S$ satisfies \ref{item:Minkowski}.
    Let $f \colon X \to S$ be a proper smooth morphism, where $X$ is integral and $\dim(X)=2$. If $p_a(X_K) = 0$, then the following hold:
\begin{enumerate}
\item $f$ is a smooth conic bundle.
\item If $S$ satisfies \Bref{2}, then $X \simeq \mathbb{P}_S(E),$ where $E$ is a locally free sheaf of rank 2 on $S$.
 \item If $S$ satisfies \Bref{2} and \PIDref{2}, then $X \simeq \mathbb{P}^1_S$.
\end{enumerate}
\end{lemma}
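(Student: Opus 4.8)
The plan is to handle the three claims in sequence, the heart of the matter being to exhibit $f$ as a family of conics and then to read off a Brauer class. For \textbf{(1)}, I would use the relative anticanonical embedding. By \autoref{lem: global_section} each fiber $X_s$ is a geometrically integral smooth proper curve, and by \autoref{cor: pa_constant} its arithmetic genus equals $p_a(X_K) = 0$. Since $f$ is smooth, the relative dualizing sheaf $\omega_{X/S}$ is a line bundle restricting on each fiber to $\omega_{X_s}$, which has degree $-2$. Riemann--Roch on the genus-$0$ curve $X_s$ gives $h^0(X_s, \omega_{X_s}^{-1}) = 3$, while Serre duality yields $h^1(X_s, \omega_{X_s}^{-1}) = h^0(X_s, \omega_{X_s}^{\otimes 2}) = 0$ because $\deg \omega_{X_s}^{\otimes 2} = -4 < 0$. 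Cohomology and base change then show that $\mathcal{F} \coloneqq f_* \omega_{X/S}^{-1}$ is locally free of rank $3$, with its formation commuting with base change, and that $\omega_{X/S}^{-1}$ is relatively very ample (fiberwise it is the anticanonical embedding of a conic in $\mathbb{P}^2$). The resulting closed immersion $X \hookrightarrow \mathbb{P}_S(\mathcal{F})$ into a $\mathbb{P}^2$-bundle exhibits $f$ as a family of plane conics; all fibers being smooth, $f$ is a smooth conic bundle.

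For \textbf{(2)}, I would read the smooth conic bundle $X \to S$ as a Severi--Brauer scheme of relative dimension $1$, equivalently a $\PGL_2$-torsor on $S_{\et}$. The coboundary of the sequence $1 \to \mathbb{G}_m \to \GL_2 \to \PGL_2 \to 1$ attaches to it a class $[X] \in H^2_{\et}(S, \mathbb{G}_m) = \Br(S)$ whose vanishing is exactly the condition that the $\PGL_2$-torsor lift to a $\GL_2$-torsor, i.e. that $X \cong \mathbb{P}_S(E)$ for a rank-$2$ locally free sheaf $E$ on $S$. Because conics correspond to quaternion (degree-$2$) algebras, this class is $2$-torsion (it factors through $H^2_{\et}(S, \mu_2)$), so property \Bref{2} forces $[X] = 0$ and hence $X \cong \mathbb{P}_S(E)$.

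For \textbf{(3)}, property \PIDref{2} gives $E \cong \mathcal{O}_S^{\oplus 2}$, whence $X \cong \mathbb{P}_S(\mathcal{O}_S^{\oplus 2}) = \mathbb{P}^1_S$. The main obstacle lies in the Brauer-theoretic step of (2): one must justify, over the base $S$ rather than over a field, the dictionary between smooth conic bundles, Severi--Brauer schemes of relative dimension one, and $2$-torsion classes in $\Br(S)$, together with the fact that triviality of the class yields a genuine projectivization of a vector bundle. This is where excellence of $S$ and representability of the relevant étale cohomology are used, and I would cite the Severi--Brauer theory in \cite{brauergroupbook} rather than reprove it; by contrast, steps (1) and (3) are routine once the relative anticanonical computation with cohomology and base change, respectively property \PIDref{2}, are in place.
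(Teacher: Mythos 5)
Your proof is correct and follows essentially the same route as the paper: the relative anticanonical embedding $X \hookrightarrow \mathbb{P}_S(f_*\omega_{X/S}^{-1})$ via cohomology and base change to get a smooth conic bundle, then vanishing of the associated $2$-torsion Brauer class under \Bref{2} to obtain $X \simeq \mathbb{P}_S(E)$, and finally \PIDref{2} to trivialize $E$. The paper's proof is just a terser version of yours, leaving implicit the Riemann--Roch computation and the conic-bundle/Severi--Brauer/$\PGL_2$-torsor dictionary that you spell out.
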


\begin{proof}
By \autoref{cor: pa_constant}, all fibers of $f$ have arithmetic genus $0$. By cohomology and base change, $\mathcal{F} \coloneqq f_* \mathcal{O}_X(-K_{X/S})$ is a locally free sheaf of rank $3$ and the anti-canonical map $X \hookrightarrow \mathbb{P}_S(\mathcal{F})$ realizes $f$ as a smooth conic bundle over $S$.
If $S$ satisfies \Bref{2}, the associated Brauer class is trivial and $X \simeq \mathbb{P}_S(E)$ for some locally free sheaf $E$ of rank $2$. If $S$ satisfies \PIDref{2}, then $E$ is trivial and then $X \simeq \mathbb{P}^1_S$. 
\end{proof}

\begin{proposition} \label{prop:curve_Z}
Assume that $S$ satisfies \ref{item:Minkowski}, \Bref{2}, \PIDref{2}, and \ref{item:AbrashkinFontaine} for all $i$.
    Let $f \colon X \to S$ be a proper smooth morphism, where $X$ is integral and $\dim(X)=2$.
    Then $X \simeq \mathbb{P}^1_{S}$.
\end{proposition}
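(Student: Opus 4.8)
The plan is to reduce everything to \autoref{lem: smooth_curves_g0}, whose part (3) already yields $X \simeq \mathbb{P}^1_S$ once we know that $S$ satisfies \Bref{2} and \PIDref{2} (both of which hold by hypothesis) together with the single remaining input $p_a(X_K) = 0$. Thus the entire content of the proposition is the vanishing of the arithmetic genus of the generic fibre, and this is precisely where property \ref{item:AbrashkinFontaine} will enter.

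First I would record the basic geometry. Since $X$ is integral and $f$ is proper and smooth of relative dimension $1$ (as $\dim X = 2$ while $\dim S = 1$), every fibre $X_s$ is a smooth proper curve and, by \autoref{lem: global_section}, is geometrically integral. Flatness of $f$ makes $\chi(X_s, \mathcal{O}_{X_s})$ constant, and $h^0(X_s, \mathcal{O}_{X_s}) = 1$, so by \autoref{cor: pa_constant} the genus $g \coloneqq p_a(X_K) = p_a(X_s)$ is independent of the closed point $s$; equivalently $b_1(X_K) = 2g$.

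Next I would rule out $g \geq 1$ by producing a forbidden abelian scheme. Because each fibre is a curve, $h^2(X_s, \mathcal{O}_{X_s}) = 0$ for every closed $s \in S$, so \autoref{prop: pic}(3) applies and shows that $\Pic^{0}_{X/S}$ is an abelian scheme over $S$ of relative dimension $\frac{1}{2} b_1(X_K) = g$. If $g \geq 1$, this is a non-trivial abelian variety of relative dimension $g$ over $S$, contradicting \AFref{g} (which holds because $S$ satisfies \ref{item:AbrashkinFontaine} for all $i$). Hence $g = 0$, i.e. $p_a(X_K) = 0$, and \autoref{lem: smooth_curves_g0}(3) finishes the proof.

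The argument is essentially a bookkeeping of the earlier results, so there is no serious obstacle; the only point that deserves care is checking that $\Pic^{0}_{X/S}$ is genuinely non-trivial. But for a family of curves of genus $g \geq 1$ its relative dimension is exactly $g \geq 1$, so it cannot be the zero group scheme, and the clash with Abrashkin--Fontaine is unavoidable.
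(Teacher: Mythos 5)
Your proof is correct and follows essentially the same route as the paper: pass to the Jacobian $\Pic^0_{X/S}$, use \autoref{prop: pic}(3) (valid since $h^2(X_s,\mathcal{O}_{X_s})=0$ for curve fibres) to see it is an abelian scheme of relative dimension $\tfrac{1}{2}b_1(X_K)=p_a(X_K)$, invoke \AFref{i} to force this to vanish, and conclude via \autoref{lem: smooth_curves_g0}(3). Your extra remark verifying non-triviality of the Jacobian when $g\geq 1$ is a harmless elaboration of what the paper leaves implicit.
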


\begin{proof}
Consider the Jacobian $\Pic^0_{X/S} \to S$. By \autoref{prop: pic} and since $h^2(X_{s},\mathcal{O}_{X_{s}}) = 0$, as the fibers are curves, this is an Abelian variety of dimension $\frac{1}{2}b_1(X_K)$. As $R$ satisfies \ref{item:AbrashkinFontaine}, we deduce that $b_1(X_K) = 0$. Hence, $p_a(X_K) = 0$ and so $X \simeq \mathbb{P}^1_S$ by \autoref{lem: smooth_curves_g0}.
\end{proof}

\begin{remark}
By \autoref{cor: Minkowski}, the classification result of
\autoref{prop:curve_Z} applies in particular to $R \in \{\mathbb{Z},\mathbb{Z}[i],\mathbb{Z}[\frac{1 +\sqrt{-3}}{2}]\}$.
\end{remark}

By considering Jacobians of curves, we can also prove the following stronger version of \autoref{rmk: AF_implies_mixed}:

\begin{proposition}
If $S$ satisfies \AFref{i} for all $i$, then $S \to \Spec(\mathbb{Z})$ is surjective.
\end{proposition}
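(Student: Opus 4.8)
The plan is to prove the contrapositive: I will show that if $S \to \Spec(\mathbb{Z})$ fails to be surjective, then $S$ carries a non-trivial abelian scheme, contradicting \AFref{g} for the relevant $g$ and hence \AFref{i} for all $i$. First I would reduce to the case of mixed characteristic. Indeed, by \autoref{rmk: AF_implies_mixed}, \AFref{i} already forces $S$ to be of mixed characteristic (otherwise $S$ is a scheme over $\mathbb{Q}$ or $\mathbb{F}_p$, and a constant family supplies a non-trivial abelian scheme); so I may assume $\charfield K = 0$, in which case the generic point of $\Spec(\mathbb{Z})$ lies in the image.

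Since $S$ is integral with $\charfield K = 0$, the image of $S \to \Spec(\mathbb{Z})$ consists of $(0)$ together with the residue characteristics of the closed points of $S$. Non-surjectivity therefore means there is a prime $q$ that is not the residue characteristic of any closed point, i.e. $q$ is invertible on $S$, so that the structure morphism factors as $S \to \Spec(\mathbb{Z}[1/q]) \to \Spec(\mathbb{Z})$. The core of the argument is then to exhibit a non-trivial abelian scheme over $\mathbb{Z}[1/q]$ and pull it back along $S \to \Spec(\mathbb{Z}[1/q])$.

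To produce such an abelian scheme I would take the relative Jacobian of a Fermat curve. Choose a power $d$ of $q$ with $d \geq 3$ (so $d = q$ when $q$ is odd, and $d = 4$ when $q = 2$, to ensure positive genus). Since $d$ is a unit in $\mathbb{Z}[1/q]$, the Jacobian criterion shows that $F_d = \{X^d + Y^d = Z^d\} \subseteq \mathbb{P}^2_{\mathbb{Z}[1/q]}$ is smooth and proper over $\mathbb{Z}[1/q]$ with geometrically integral fibres of genus $g = (d-1)(d-2)/2 \geq 1$, and it carries the section $[1:0:1]$. Consequently $J \coloneqq \Pic^0_{F_d/\mathbb{Z}[1/q]}$ is an abelian scheme of relative dimension $g$ over $\mathbb{Z}[1/q]$ (I cannot invoke \autoref{prop: pic} directly here, since $\mathbb{Z}[1/q]$ does not satisfy \ref{item:Minkowski}, but the representability of $\Pic^0$ for a smooth proper curve with a section and geometrically connected fibres is classical). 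Its pullback $J \times_{\mathbb{Z}[1/q]} S$ is then a non-trivial abelian scheme of relative dimension $g \geq 1$ over $S$, contradicting \AFref{g}.

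I expect the main obstacle to be arranging, uniformly in $q$, a positive-genus curve over $\mathbb{Z}[1/q]$ with everywhere-good reduction. Attempting to use elliptic curves directly is delicate, as it is unclear that every prime $q$ admits an elliptic curve over $\mathbb{Q}$ of conductor a power of $q$; passing to Jacobians of higher-genus curves removes this difficulty, which is precisely why the hypothesis \AFref{i} for all $i$ (not merely \AFref{1}) is the natural one. The only remaining care is the small-characteristic adjustment ($d = 4$ for $q = 2$) to keep $g \geq 1$, and verifying that $\Pic^0$ is an abelian scheme whose formation commutes with the base change to $S$.
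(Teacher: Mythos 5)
Your proposal is correct and follows essentially the same route as the paper: reduce non-surjectivity to the factorization $S \to \Spec(\mathbb{Z}[1/q])$, exhibit a smooth proper curve of positive genus over $\mathbb{Z}[1/q]$, and pass to its Jacobian, pulled back to $S$, to contradict \AFref{i}. The only deviations are cosmetic --- you use the Fermat quartic at $q=2$ where the paper uses the elliptic curve $y^2 = x^3 - x$, and you make explicit two points the paper leaves implicit (the reduction to mixed characteristic via \autoref{rmk: AF_implies_mixed}, and the fact that representability of $\Pic^0$ must be cited classically rather than via \autoref{prop: pic}, since $\mathbb{Z}[1/q]$ need not satisfy \ref{item:Minkowski}).
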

\begin{proof}
Passing to Jacobians as in \autoref{prop:curve_Z}, it suffices to produce a smooth proper curve of positive genus over $\mathbb{Z}\big[\frac{1}{p}\big]$ for every prime $p$.
For $p = 2$, we can use the elliptic curve with Weierstra\ss ~equation $y^2 = x^3 - x$. For odd $p$, we can use the Fermat curve given in $\mathbb{P}^2_{\mathbb{Z}}$ by $(x^p + y^p + z^p = 0)$.
It is smooth over $\mathbb{Z}\big[\frac{1}{p}\big]$ by the Jacobian criterion and its genus is positive, since $p \geq 3$.
\end{proof}

\begin{question}
Is there an $i > 0$ such that \AFref{i} implies that $S \to \Spec (\mathbb{Z})$ is surjective? In other words, is there an $i$ such that for every prime $p$ and every Dedekind scheme $S$ without residue fields of characteristic $p$, there is a smooth family of abelian varieties of relative dimension $i$ over $S$?
\end{question}

\section{On the MMP for smooth projective models of surfaces} \label{section:MMP}

The Minimal Model Program (MMP) for semi-stable threefolds over a Dedekind domain with perfect residue fields of arbitrary characteristic was established in \cite{HW22, TY23}.
In this section we show how to quickly reprove those results in the case of families of smooth surfaces and its applications to the classification problems of smooth surfaces over $\mathbb{Z}$ and more general Dedekind schemes. Recall our convention for the base scheme $S$:

\begin{notation}
\label{notation:MMP}
In this section, $S$ is an excellent Dedekind scheme whose residue fields at closed points are perfect.
\end{notation}

We start with the base-point-free theorem for families of klt surface pairs.

\begin{proposition}[Easy base point free theorem] \label{prop: easy_bpf}
    Let $f \colon (X,\Delta) \to S$ be a projective contraction of relative dimension 2 such that $K_X+\Delta$ is $\mathbb{Q}$-Cartier and every closed fibre $(X_s, \Delta_s)$ is klt.
    Let $L$ be a nef Cartier divisor such that $L-(K_X+\Delta)$ is big and nef.
    Then 
    \begin{enumerate}
        \item[(i)] there exists $m_0>0$ such that $|mL|$ is base point free for every $m \geq m_0$;
        \item[(ii)] if $f \colon X \to Y= \Proj_R R(X,L)$ is the induced semiample contraction, then $Y_t \simeq \Proj_{k(t)} R(X_t, L_t)$.
    \end{enumerate}
\end{proposition}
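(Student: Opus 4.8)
The plan is to push everything down to the fibres of $f$, which are klt surfaces, and to use that on a surface both the base point free theorem and Kawamata–Viehweg vanishing hold over an arbitrary perfect field, hence in every characteristic (Tanaka). The decisive feature of the argument is that, via cohomology and base change, all the relative statements on the threefold $X$ will be deduced while invoking vanishing \emph{only} on the two-dimensional fibres; this is what makes the theorem ``easy'', since Kodaira-type vanishing on $X$ itself may fail in mixed characteristic.

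First I would record the positivity on fibres. Since $f$ is a contraction, $X$ dominates $S$; as $S$ is regular of dimension one and $X$ is integral, $f$ is flat, so $\chi(X_t,\mathcal O_{X_t}(mD))$ is independent of $t\in S$ for every divisor $D$ and every $m$. Apply this to $D=L-(K_X+\Delta)$: nefness restricts, so $D_t$ is nef on each fibre, and on the generic fibre $D_K$ is big and nef (this is the relative bigness hypothesis), so $D_K^2>0$. The point needing care is that bigness does not restrict to special fibres for free; but by Riemann–Roch on the surface $X_t$ the leading term of $\chi(X_t,mD_t)$ is $\tfrac12 D_t^2 m^2$, so constancy of $\chi$ forces $D_t^2=D_K^2>0$ for all $t$, whence $D_t$ is big and nef on every fibre. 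Moreover $(X_t,\Delta_t)$ is klt for all $t$, since the klt locus is open in the family and contains every closed point.

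Next, for $m\ge1$ one has $mL_t-(K_{X_t}+\Delta_t)=(m-1)L_t+D_t$, a sum of a nef and a big-and-nef divisor, hence big and nef; Kawamata–Viehweg vanishing for klt surfaces gives $H^i(X_t,\mathcal O_{X_t}(mL_t))=0$ for all $i>0$ and all $t$. As $f$ is proper and flat and $\mathcal O_X(mL)$ is $S$-flat, cohomology and base change then yields, for every $m\ge1$: $R^if_*\mathcal O_X(mL)=0$ for $i>0$, the sheaf $f_*\mathcal O_X(mL)$ is locally free, and its formation commutes with base change, i.e. $f_*\mathcal O_X(mL)\otimes k(t)\xrightarrow{\ \sim\ }H^0(X_t,\mathcal O_{X_t}(mL_t))$ for all $t$ (the case $m=0$ being the defining equality $f_*\mathcal O_X=\mathcal O_S$).

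For (i), base change reduces base point freeness to the fibres: by Nakayama the evaluation map $f^*f_*\mathcal O_X(mL)\to\mathcal O_X(mL)$ is surjective at a point of $X_t$ precisely when that point is not a base point of $|mL_t|$, so $mL$ is relatively free if and only if $|mL_t|$ is base point free on every fibre (and over affine $S$ this is the same as $|mL|$ being base point free). The main obstacle is now uniformity over the infinitely many closed points of $S$, which I would resolve by spreading out from the generic fibre. Choose $N$ with $|jL_K|$ base point free for all $N\le j<2N$ (surface base point free theorem on $X_K$). For each such $j$ the cokernel of $f^*f_*\mathcal O_X(jL)\to\mathcal O_X(jL)$ is coherent with closed support disjoint from $X_K$, hence supported on finitely many fibres; let $T\subset S$ be the finite set of the corresponding closed points, taken over all $j$ in the range. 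Over $S\setminus T$ every $|jL_t|$ with $N\le j<2N$ is base point free; since base point freeness is preserved under adding divisors and $\{N,\dots,2N-1\}$ generates every integer $\ge N$ as a nonnegative combination, $|mL_t|$ is base point free for all $m\ge N$ and all $t\notin T$. For each of the finitely many $s\in T$ the surface theorem gives $N_s$ with $|mL_s|$ base point free for $m\ge N_s$, and $m_0:=\max\{N,\,N_s:s\in T\}$ makes $|mL_t|$ base point free on every fibre for all $m\ge m_0$, proving (i). Finally, (ii) is base change for $\Proj$: the base change isomorphism above gives $R(X,L)\otimes_{\mathcal O_S}k(t)=\bigoplus_m H^0(X_t,\mathcal O_{X_t}(mL_t))=R(X_t,L_t)$, and since $\Proj$ commutes with base change we conclude $Y_t=\Proj_{k(t)}R(X_t,L_t)$.
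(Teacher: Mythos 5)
Your flatness argument, the constancy-of-$\chi$ computation showing that $D_t=(L-(K_X+\Delta))|_{X_t}$ is big and nef on every closed fibre, and the spreading-out trick producing a uniform $m_0$ for fibrewise base point freeness are all sound — the last is in fact more explicit than the paper, which simply asserts a uniform $m_0$ across the fibres. The genuine gap is the step ``Kawamata--Viehweg vanishing for klt surfaces gives $H^i(X_t,\mathcal{O}_{X_t}(mL_t))=0$ for all $i>0$ and all $t$'' for every $m\ge 1$, resting on your framing claim that KVV ``holds over an arbitrary perfect field, hence in every characteristic (Tanaka)''. That claim is false: Kawamata--Viehweg vanishing fails for klt surfaces in positive characteristic — there are smooth rational surfaces in characteristic $2$ violating it (Cascini--Tanaka) and klt del Pezzo surfaces in characteristic $3$ doing so (Bernasconi, the first-named author of this very paper) — and the proposition allows residue characteristics $2$ and $3$; the paper's principal case $S=\Spec(\mathbb{Z})$ has exactly such fibres. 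What Tanaka actually provides, and what the paper's proof invokes, is only the \emph{asymptotic} version \cite[Theorem 3.8]{Tan18}: on a fixed klt surface $X_t$ one gets $H^1(X_t,\mathcal{O}_{X_t}(mL_t))=0$ for all $m\ge m_0(t)$, with $m_0(t)$ depending on the fibre. Your false claim is load-bearing: it is the sole input into your cohomology-and-base-change step, and without the surjectivity $f_*\mathcal{O}_X(mL)\otimes k(t)\twoheadrightarrow H^0(X_t,\mathcal{O}_{X_t}(mL_t))$ both the ``precisely when'' in your Nakayama reduction of (i) to the fibres and the whole of (ii) collapse.

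The repair is to substitute the asymptotic vanishing fibrewise, as the paper does: take $m_0$ with $H^1(X_t,\mathcal{O}_{X_t}(mL_t))=0$ for all $t$ and all $m\ge m_0$, feed this into the exact sequence $0\to\mathcal{O}_X(mL-X_t)\to\mathcal{O}_X(mL)\to\mathcal{O}_{X_t}(mL)\to 0$, and conclude $R^1f_*\mathcal{O}_X(mL)=0$ by Grauert, whence the desired surjectivity on $H^0$. Be aware, however, that the uniformity of $m_0(t)$ in $t$ still requires an argument (the paper is terse here too), and your spreading-out device does not transfer: base point freeness is preserved under adding divisors, which is what lets you bootstrap from the range $N\le j<2N$, but cohomology vanishing has no such additivity, so vanishing in a finite range of degrees does not propagate to all $m\ge N$. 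Two smaller points: $L-(K_X+\Delta)$ is only $\mathbb{Q}$-Cartier, so your Riemann--Roch/constancy computation of $D_t^2$ should be run on a Cartier multiple; and when $L_t$ is numerically trivial the base point free theorem \cite[Theorem 4.1]{Tan18} does not apply directly — the paper treats this case separately via $h^1(X_t,\mathcal{O}_{X_t})=h^2(X_t,\mathcal{O}_{X_t})=0$ on del Pezzo fibres, a case your sketch silently skips. Finally, note that for (ii) you never needed base change in all degrees anyway: since $\Proj$ is unchanged by truncating a graded ring in high degrees, base change in degrees $\ge m_0$ suffices.
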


\begin{proof}
    By the base-point-free theorem for klt surfaces over fields (if $L_{X_t}$ is not numerically trivial it follows from \cite[Theorem 4.1]{Tan18} and if it is numerically trivial it follows from the vanishing $h^1(X_t,\mathcal{O}_{X_t}) =h^2(X_t,\mathcal{O}_{X_t})= 0$ for del Pezzo surfaces over perfect fields), there exists $m_0>0$ such that for all $m \geq m_0$ we have that $|mL_{X_t}|$ is base point free for every point $t \in \Spec(R)$. 
    Moreover, there exists $m_0>0$ such that $H^1(X_t, \mathcal{O}_X(mL_t))=0$ for all $m \geq m_0$ by the asymptotic version of the Kawamata--Viehweg vanishing theorem (see \cite[Theorem 3.8]{Tan18}). 
    Consider the short exact sequence:
    $$0 \to \mathcal{O}_X(mL-X_t) \to \mathcal{O}_X(mL) \to \mathcal{O}_{X_t}(mL) \to 0.$$
    As $H^1(X_t, \mathcal{O}_{X_t}(mL))=0$ for every $t$, we deduce $R^1f_*\mathcal{O}_X(mL)=0$ by Grauert's theorem and thus also $R^1f_*\mathcal{O}_X(mL-X_t)\simeq R^1f_*\mathcal{O}_X(mL) \otimes \mathfrak{m}_t=0$. Therefore $f_*\mathcal{O}_X(mL) \to H^0(X_t, \mathcal{O}_{X_t}(mL))$ is surjective for every $t$.
    Thus we deduce both (i) and (ii): $|mL|$ is base point free for $m \geq m_0$ and the special fibre $Y_t$ coincides with $\Proj_{k(t)} R(X_t, L_t)$.
\end{proof}

\begin{remark}
    If $S$ has positive or mixed characteristic, the semiampleness part in \autoref{prop: easy_bpf} could be proven by combining the base point free theorem of \cite[Theorem 3.8]{Tan18} of \cite{Wit24} to deduce the statement that $|mL|$ is base point free for every \emph{sufficiently divisible} $m \geq m_0$, which is weaker than (i) (cf.~\cite{Tan20, Ber21} for the case of threefolds in characteristic $p$). 
    Such a proof has the advantage to generalise to families of klt 3-folds, despite the failure of the Kawamata--Viehweg vanishing theorem.
\end{remark}

The cone theorem in our setting follows from the cone theorem for smooth surfaces over a field together with a deformation theory argument of extremal $K_X$-negative curves on smooth surfaces. This result is known from the work of Kawamata \cite[Theorem 1.3]{Kaw94} and Takamatsu--Yoshikawa \cite[Proposition 4.4.(2)]{TY23}.

\begin{theorem}[Easy cone theorem] \label{thm: cone}
     Let $f \colon X \to S$ be a projective smooth morphism of relative dimension 2.
    Then 
    \begin{equation} \label{eq:cone}
         \NE(X/S) = \NE(X/S)_{K_X \geq 0} + \sum_{i \in I} \mathbb{R}_{+}[C_i],
    \end{equation}
   where $I$ is a countable set and $\mathbb{R}_{+}[C_i]$ are extremal rays spanned by integral proper curves $C_i$ such that $K_X \cdot C_i<0$. 
   
   If $A$ is an ample $\mathbb{Q}$-Cartier $\mathbb{Q}$-divisor, then there exists a finite number of curves $C_1, \dots, C_{n_A}$ such that
   $$ \NE(X/S) = \NE(X/S)_{K_X+A \geq 0} + \sum_{i=1}^{n_A} \mathbb{R}_{+}[C_i].$$
\end{theorem}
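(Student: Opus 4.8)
The plan is to reduce the relative statement to the classical cone theorem on the smooth surface fibres of $f$, upgrading the fibrewise conclusions to uniform ones by means of the universal bound $-K_X\cdot C_i\le 3$ and the finiteness of numerical classes of bounded degree. First I would observe that every curve over $S$ lies in a closed fibre: by definition such a $C$ is proper over $S$ and contracted by $f$, so $f(C)$ is a closed point $s\in S$ and $C\subseteq X_s$. Consequently $\NE(X/S)$ is the closed convex cone generated by $\bigcup_s \iota_{s,*}\NE(X_s)$, where $s$ runs over the closed points and $\iota_s\colon X_s\hookrightarrow X$ is the inclusion. Moreover $K_X\cdot C=K_{X_s}\cdot C$ for every $C\subseteq X_s$, since $K_X=K_{X/S}+f^*K_S$, the divisor $f^*K_S$ meets the contracted curve $C$ trivially, and $K_{X/S}|_{X_s}=K_{X_s}$ by adjunction.

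Next, on each smooth projective surface $X_s$ over the perfect field $k(s)$ I would apply Mori's cone theorem for surfaces, giving
$$\NE(X_s)=\NE(X_s)_{K_{X_s}\ge 0}+\sum_j \mathbb{R}_+[C_{s,j}],\qquad C_{s,j}\ \text{rational},\ 0<-K_{X_s}\cdot C_{s,j}\le 3,$$
with the rays accumulating only on the wall $K_{X_s}=0$. The deformation-theoretic input of Kawamata and Takamatsu--Yoshikawa matches fibrewise and relative extremality: a $K_X$-negative rational curve $C\subseteq X_s$ deforms inside $X$ in a family of dimension at least $-K_X\cdot C\ge 1$, one more than its deformations inside $X_s$, so it moves to neighbouring fibres and its class spans an extremal ray of $\NE(X/S)$, while conversely any $K_X$-negative extremal ray of $\NE(X/S)$ is generated by such a fibre curve. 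In particular every $K_X$-negative extremal ray is spanned by a rational curve $C_i$ with $-K_X\cdot C_i\le 3$.

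The finiteness then comes from a boundedness argument. For a fixed relatively ample $H$ and each integer $d>0$, the curves $C\subseteq X$ contracted by $f$ with $-K_X\cdot C\le 3$ and $H\cdot C\le d$ form a bounded family, parametrized by a relative Chow scheme that is of finite type over the Noetherian base $S$; since the numerical class over $S$ is constant on each connected flat family, only finitely many classes occur in each such stratum. Letting $d\to\infty$ shows that the $K_X$-negative extremal rays constitute a countable set $\{C_i\}_{i\in I}$, locally discrete away from $K_X=0$, which yields the first displayed equality as well as the closedness of the cone. For the second equality, fix an ample $\mathbb{Q}$-Cartier $A$: if $(K_X+A)\cdot C<0$ for an extremal curve $C$, then $0<A\cdot C<-K_X\cdot C\le 3$, so all relevant curves lie in a single bounded family and only finitely many rays $\mathbb{R}_+[C_1],\dots,\mathbb{R}_+[C_{n_A}]$ can occur, giving $\NE(X/S)=\NE(X/S)_{K_X+A\ge 0}+\sum_{i=1}^{n_A}\mathbb{R}_+[C_i]$.

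I expect the main obstacle to be the uniform control of the $K_X$-negative rays across the (possibly uncountably many) closed fibres: a priori each fibre contributes its own rays, which could accumulate in $N_1(X/S)$. The resolution rests on two points that must be handled with care — the fibrewise length bound $-K_X\cdot C_i\le 3$, coming from surface bend-and-break, and the relative Chow scheme being of finite type over the excellent Noetherian $S$ together with the constancy of numerical classes in flat families, which together force bounded degree to produce only finitely many classes. The other delicate point is the deformation argument identifying fibrewise with relative extremality, which is what legitimizes passing between $\NE(X_s)$ and $\NE(X/S)$.
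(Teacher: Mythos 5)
Your proposal is correct and takes essentially the same route as the paper: the paper offers no independent proof but attributes the result to Kawamata \cite[Theorem 1.3]{Kaw94} and Takamatsu--Yoshikawa \cite[Proposition 4.4.(2)]{TY23}, summarizing the strategy as ``the cone theorem for smooth surfaces over a field together with a deformation theory argument of extremal $K_X$-negative curves'' --- exactly the two ingredients you assemble (reduction to closed fibres, the fibrewise cone theorem with the length bound $-K_X \cdot C \leq 3$, and the deformation/boundedness step to control accumulation of rays). Your boundedness and countability discussion fleshes out what the cited references establish, with only the minor caveat that fibrewise extremality need not literally coincide with relative extremality; this is harmlessly repaired by the standard argument of discarding non-extremal rays from the locally discrete collection.
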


We prove that smoothness descends along each step of a $K_X$-MMP for smooth projective families of surfaces over $S$.

\begin{theorem} \label{thm: smooth_after_MMP}
    Let $f \colon X \to S$ be a projective morphism of relative dimension 2 such that all fibres are regular (resp.~canonical).
    Let $\varphi \colon X \to Y$ be the contraction of a $K_X$-negative extremal ray $\xi=\mathbb{R}_{+}[C]$. 
    Then either 
    \begin{enumerate}
        \item $\varphi$ is a birational divisorial contraction and the fibres of the induced morphism $f' \colon Y \to S$ are regular (resp.~canonical);
        \item $\varphi$ is a Mori fibre space and the fibres of $Y \to S$ are regular.
    \end{enumerate}
\end{theorem}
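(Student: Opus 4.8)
The plan is to localise the contraction to the single fibre that contains $C$, read off its geometry from the theory of surfaces, and then globalise by combining the easy base point free theorem (\autoref{prop: easy_bpf}) with a normal bundle computation that excludes small contractions. First I would note that, since $C$ is a curve over $S$, it is contracted by $f$ and hence lies in one closed fibre $X_s$. As $X_s$ is linearly equivalent to $f^*(s)$ and $f|_C$ is constant, $X_s\cdot C=0$, so adjunction gives $K_{X_s}\cdot C=(K_X+X_s)\cdot C=K_X\cdot C<0$. Thus $\xi$ restricts to a $K_{X_s}$-negative extremal ray of the regular (resp.\ canonical) surface $X_s$, and the two-dimensional theory over the perfect field $k(s)$ (after base change to $\overline{k(s)}$ if needed) leaves only three fibrewise pictures: either $C$ is a $(-1)$-curve that gets blown down; or $C^2=0$ and $C$ is a fibre of a ruling $X_s\to B_s$ onto a smooth curve; or $C^2>0$, $X_s$ has Picard rank one, and it is contracted to a point.

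Next I would construct $\varphi$ globally. Using \autoref{thm: cone} I would choose a nef Cartier divisor $L$ supporting $\xi$ for which $L-K_X$ is big and nef, so that \autoref{prop: easy_bpf} produces the contraction $\varphi\colon X\to Y=\Proj_R R(X,L)$; crucially, its part (ii) identifies the fibres as $Y_t\simeq\Proj_{k(t)}R(X_t,L_t)$, that is, the fibres of $f'\colon Y\to S$ are exactly the fibrewise contractions of the restricted rays. This reduces the regularity (resp.\ canonicity) of the $Y_t$ to the surface statement above: blowing down a $(-1)$-curve preserves regularity (resp.\ Du Val canonicity, seen on the minimal resolution), and the base of a ruling of such a surface is automatically a smooth curve.

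It then remains to decide, according to $\dim Y$, which alternative of the theorem holds, and in the birational case to verify that $\varphi$ is divisorial and not small. For this I would compute, in the $(-1)$-curve situation, the normal bundle of $C$ in the threefold from
\[
0\longrightarrow N_{C/X_s}\longrightarrow N_{C/X}\longrightarrow N_{X_s/X}|_C\longrightarrow 0,
\]
which reads $0\to\mathcal{O}(-1)\to N_{C/X}\to\mathcal{O}\to 0$ and hence splits as $N_{C/X}\simeq\mathcal{O}\oplus\mathcal{O}(-1)$. Then $h^1(N_{C/X})=0$ and $h^0(N_{C/X})=1$, while $h^0(N_{C/X_s})=0$, so the relative Hilbert scheme of $C$ is smooth and étale over $S$ at $[C]$: the curve $C$ deforms to a unique $(-1)$-curve in each nearby fibre, sweeping out a divisor $E\subset X$. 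Therefore $\Exc(\varphi)=E$ is a divisor, $\varphi$ is a divisorial contraction with $\dim Y=3$, and we are in case (1). In the two remaining fibrewise pictures $\dim Y<3$, while extremality of $\xi$ forces $\rho(X/Y)=1$ and $\varphi$-ampleness of $-K_X$; since $X$ is regular, hence terminal and $\mathbb{Q}$-factorial, $\varphi$ is a Mori fibre space, and its fibres are regular by the surface analysis, giving case (2).

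The hardest part is precisely this exclusion of small contractions, equivalently of flips: one must show that the contracted $(-1)$-curve is not an isolated rigid curve in $X$ but genuinely propagates to a divisor over $S$. The splitting $N_{C/X}\simeq\mathcal{O}\oplus\mathcal{O}(-1)$ is exactly what guarantees this in the regular case. In the canonical case I would run the same computation on the fibrewise minimal resolution, where the strict transform of $C$ is again a $(-1)$-curve meeting the exceptional configuration in a controlled way, deform and contract it in families as before, and then check that the resulting contraction keeps each fibre Du Val, hence canonical.
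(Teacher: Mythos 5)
Your regular-case argument is correct, but it takes a genuinely different and longer route to divisoriality than the paper does. The paper's proof is much shorter: by \autoref{prop: easy_bpf}~(ii), each fibre $Y_t \simeq \Proj_{k(t)} R(X_t,L_t)$ is itself the contraction of a $K_{X_t}$-negative extremal face of the surface $X_t$, hence is terminal (resp.\ canonical) of dimension $2$, hence regular (resp.\ Gorenstein) by \cite[Theorem 2.29]{kk-singbook}. Since all fibres of $Y \to S$ are Gorenstein and $S$ is a regular one-dimensional base, $Y$ itself is Gorenstein, so $K_Y$ is Cartier; if $\varphi$ were small one would get $K_X = \varphi^* K_Y$ and thus $K_X \cdot C = 0$, contradicting $K_X$-negativity. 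This excludes small contractions uniformly in both the regular and the canonical case, with no Hilbert-scheme input. Your deformation-theoretic alternative --- $N_{C/X} \simeq \mathcal{O} \oplus \mathcal{O}(-1)$, vanishing of $H^1(C, N_{C/X_s})$, smoothness of $\mathrm{Hilb}(X/S) \to S$ at $[C]$, and sweeping out the exceptional divisor --- is a valid Mori-style argument in the regular case (with the minor caveat that over a non-closed perfect residue field $C$ may only be a form of a $(-1)$-configuration, so the computation should be run after base change to $\overline{k(s)}$, as you note), and it has the merit of showing \emph{why} the contracted curve propagates to a divisor over $S$; but it buys this at the cost of extra machinery the paper never needs.

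The genuine gap is in your canonical case. You propose to ``run the same computation on the fibrewise minimal resolution \ldots deform and contract it in families as before,'' but the fibrewise minimal resolutions do not in general fit together into a family: a simultaneous resolution of the Du Val singularities of $X \to S$ exists only after a base change (Brieskorn--Artin), which destroys the setting, and the situation is even more delicate in mixed characteristic. Working directly on $X$ does not help either: in the canonical case the total space need not be regular and the $K$-negative curve $C$ may pass through the Du Val points of $X_s$, so $N_{C/X}$ need not be locally free and the tangent/obstruction computation underlying your Hilbert-scheme step does not apply as stated. So as written, your exclusion of small contractions only covers the regular case; to repair the canonical case without new ideas you would essentially have to fall back on the paper's Gorenstein argument, which derives divisoriality from the fibrewise surface classification alone.
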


\begin{proof}
By \autoref{prop: easy_bpf}, we know that that $Y_t$ is the normal variety obtained as the contraction from $X_t$ of a $K_{X_t}$-negative extremal ray.
Suppose $\varphi$ is a birational contraction.
In this case, we deduce that $Y_t$ is terminal (resp. canonical) of dimension 2 and thus regular (resp. Gorenstein) by \cite[Theorem 2.29]{kk-singbook}. 
In particular, in both cases $Y$ is Gorenstein and thus $p$ is a divisorial contraction. In the first case, the $Y_t$ are terminal and hence $Y \to \Spec(R)$ is a smooth morphism. 

If $\varphi$ is not birational, then it is a Mori fibre space. If $\dim(Y)=1$, the fibres $Y_t$ over $S$ are normal curves and thus regular.
\end{proof}

\begin{remark}
    Choosing the class of a curve $C$ which generates a $K_X$-negative extremal ray (instead of a curve $C$ such that its class $C_K$ in $X_K$ generates a $K_{X_K}$-negative extremal ray) in \autoref{thm: smooth_after_MMP} is fundamental. 
    
    Consider the following example:
    Choose a prime $p$ and let $Y$ be the blow-up of $\mathbb{P}^2_{\mathbb{Z}}$ at $[0:0:1]$ and at the strict transform of $[p:0:1]$ with exceptional divisors $E_1$ and $E_p$, respectively. 
    Note that the generic $E_{1,0}$ is a $K_{X_0}$-negative extremal ray. Note that we can contract $E_1$ globally over $\mathbb{Z}$ via a birational contraction $Y \to X$, where 
    $$X=\left\{x(x-pz)=yw \right\} \subset \mathbb{P}^3_\mathbb{Z}=\Proj \mathbb{Z}[w,x,y,z]$$ is a quadric, so that $X_p$ has an $A_1$-singularity (explicitely, the birational map $\mathbb{P}^2_\mathbb{Z} \dashrightarrow X$ is given by 
    $[x:y:z] \mapsto [x(x-pz):xy:y^2:yz]$). 
    The problem is that the contraction of $E_{1,0}$ is not the contraction of an extremal $K_{X_p}$-negative ray as the specialisation of $E_0$ to $X_p$ is the union of a $(-1)$-curve and a $(-2)$-curve intersecting transversally.
\end{remark}

Combining \autoref{thm: cone} together with \autoref{prop: easy_bpf}, we can run the MMP for projective models of smooth surfaces over Dedekind schemes with perfect residue fields.

\begin{theorem}[Running the MMP] \label{thm: running_MMP}
    Let $X \to S$ be a projective smooth morphism of relative dimension 2. 
    Then we can run a $K_X$-MMP over $S$:
    $$ X \coloneqq X_0 \xrightarrow{\varphi_1} X_1 \xrightarrow{\varphi_2} X_2 \to \dots \xrightarrow{\varphi_n} X_n =:Y.  $$
   Here each $X_i \to S$ is a smooth projective morphism, and the algorithm terminates with $Y \to S$ such that one of the following two cases holds:
    \begin{enumerate}
        \item there exists a Mori fibre space structure $Y \to Z$ over $S$, where $Z$ is smooth and projective over $S$;
        \item $K_Y$ is nef over $S$.
    \end{enumerate}
    Moreover, each $\varphi_i$ is a blow-up of a regular subscheme $Z_{i} \subset X_{i}$ that is \'etale over $S$.
\end{theorem}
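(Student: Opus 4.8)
The plan is to run the MMP inductively, showing at each step that the hypotheses of Theorem~\ref{thm: smooth_after_MMP} are preserved and that the process terminates. Given a smooth projective morphism $f \colon X \to S$ of relative dimension $2$, I would first check whether $K_X$ is nef over $S$; if so, we are in case (2) and there is nothing to do. Otherwise, by the easy cone theorem (\autoref{thm: cone}) applied with a fixed ample divisor $A$, the Mori cone $\NE(X/S)$ contains at least one $K_X$-negative extremal ray $\xi = \mathbb{R}_+[C]$ spanned by an integral curve $C$ over $S$. Using the easy base-point-free theorem (\autoref{prop: easy_bpf}), the nef Cartier divisor supporting this ray is semiample, so it induces a contraction $\varphi_1 \colon X \to X_1$ over $S$. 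By \autoref{thm: smooth_after_MMP}, $\varphi_1$ is either a divisorial contraction with $X_1 \to S$ smooth, or a Mori fibre space with smooth fibres; in the latter case we stop in case (1), and in the former case we replace $X$ by $X_1$ and repeat.

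\textbf{The contraction is a blow-up \'etale over $S$.} The substantive new content beyond the cited theorems is the final sentence: each divisorial $\varphi_i$ is the blow-up of a regular centre $Z_i \subset X_i$ that is \'etale over $S$. Here I would argue fibrewise and then globally. By the analysis in \autoref{thm: smooth_after_MMP}, the induced morphism on fibres $X_{i,t} \to X_{i+1,t}$ is the contraction of a $K_{X_{i,t}}$-negative extremal ray on a smooth surface over the perfect field $k(t)$; since $X_{i+1,t}$ is again regular (terminal surface singularities are smooth), this contraction is the blow-down of a single $(-1)$-curve, i.e.\ the inverse of a blow-up at one closed point $z_t \in X_{i+1,t}$. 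Thus $\varphi_i$ contracts an irreducible divisor $E$ whose restriction to each fibre is a single $(-1)$-curve, and the image $Z_i = \varphi_i(E)$ meets each fibre $X_{i+1,t}$ in exactly one reduced point. To conclude that $Z_i$ is regular and \'etale over $S$, I would verify that $Z_i \to S$ is quasi-finite, proper (as the image of the proper $E$), and unramified with reduced fibres of length $1$; being a finite morphism of regular schemes with reduced fibres of length one and $S$ a Dedekind scheme, $Z_i \to S$ is \'etale, and the blow-up of $X_{i+1}$ along $Z_i$ recovers $X_i$ because both are smooth with the same fibrewise description (blow-up commutes with the flat base change to each fibre, by compatibility of $\Proj$ of the Rees algebra with base change).

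\textbf{Termination.} Finally I would address why the sequence terminates. Each divisorial step $\varphi_i$ strictly drops the relative Picard rank: $\rho(X_{i+1}/S) = \rho(X_i/S) - 1$, since contracting the extremal ray $\xi$ quotients $N^1(X_i/S)$ by a one-dimensional subspace. As $\rho(X/S)$ is a finite nonnegative integer by the finiteness of the relative N\'eron--Severi group quoted in the notation (following \cite[Lemma 7.6]{CJM22}), only finitely many divisorial contractions can occur before we reach a morphism $Y \to S$ that is either a Mori fibre space (case (1)) or has $K_Y$ nef over $S$ (case (2)).

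\textbf{Anticipated main obstacle.} I expect the delicate point to be the \'etaleness of the centre $Z_i$ over $S$, rather than the termination or the fibrewise picture, which follow formally. The issue is that a priori $Z_i$ could be non-reduced or ramified at a closed point of $S$ even though each geometric fibre of $\varphi_i$ looks like a single blow-up; one must rule out, for instance, a centre that is a nonreduced thickening of a section over a single residue characteristic. This is precisely where regularity of the total space $X_{i+1}$ (already guaranteed by \autoref{thm: smooth_after_MMP}) together with the perfectness of residue fields is used: regularity of $X_{i+1}$ forces $Z_i$, cut out locally by the ideal whose blow-up resolves to the smooth $X_i$, to be itself regular, and a regular closed subscheme of a scheme smooth over $S$ that is quasi-finite with reduced length-one fibres is automatically \'etale over the Dedekind base.
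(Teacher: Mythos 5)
Your proposal is correct and follows essentially the same route as the paper's proof: rationality of the nef threshold via the cone theorem (\autoref{thm: cone}), contraction via \autoref{prop: easy_bpf}, the dichotomy and preservation of smoothness via \autoref{thm: smooth_after_MMP}, termination by the strict drop of the finite relative Picard rank, and \'etaleness of the centre $Z_i$ deduced from its fibrewise smoothness (a point the paper treats even more tersely than you do). The one small inaccuracy is your claim that $Z_i \to S$ has reduced fibres of \emph{length one}: over non-algebraically-closed residue fields the contracted divisor can restrict to a Galois orbit of disjoint conjugate $(-1)$-curves, so the fibres of $Z_i \to S$ are merely finite and reduced (of possibly higher length), but since the residue fields of $S$ are perfect this still yields unramifiedness and hence \'etaleness, exactly as in the paper's conclusion that $Z_{i,s}$ smooth over $k(s)$ forces $Z_i$ \'etale over $S$.
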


\begin{proof}
    The cone theorem \autoref{thm: cone}  implies that for any ample Cartier divisor $A$, its nef threshold
    $\lambda_A \coloneqq \inf \left\{ \lambda \geq 0 \mid K_X+\lambda A \text{ is nef}\right\}$ is a non-negative rational number. 
    
    If $K_X$ is nef, we stop as we are in case (b).
    If $K_X$ is not nef, then $\lambda_A >0$.
    If $A$ is chosen general enough, by \autoref{thm: cone} we can suppose there exists an extremal curve $C$ such that $\NE(X) \cap (K_X+\lambda_A A)^{\perp} = \mathbb{R}_{+}[C]$.
    By \autoref{prop: easy_bpf}, the $\mathbb{Q}$-divisor $K_X+\lambda_A A$ is semi-ample and thus we can contract $C$ via $\varphi_1 \colon X \to X_1$. 
    If $\varphi_1$ is of fibre type, we conclude by \autoref{thm: smooth_after_MMP} we are in case (a).
    Otherwise, by \autoref{thm: smooth_after_MMP}, $\varphi_1$ is a birational contraction and $X_1$ has smooth fibres and $\rho(X_1) < \rho(X_0)$.
    We can thus repeat the procedure asking if $K_{X_1}$ is nef. 
    This algorithm will terminate as the Picard rank as $\rho(X/S)$ is finite and $\rho(X_{i+1}/S)<\rho(X_i/S)$ at every step. 

    For the last statement, we know that $X_{i-1}$ is the blow-up of $X_i$ along $Z_i$. By \autoref{thm: smooth_after_MMP} we know that $Z_{i,s}$ is a smooth scheme over $k(s)$, and thus $Z_i$ is smooth over $S$, hence \'etale.
\end{proof}

We specialise now to the case of \'etale  simply connected Dedekind bases.

\begin{corollary}\label{cor: red_smooth_models}
    Suppose $S$ satisfies \ref{item:Minkowski} and let $X \to S$ be a projective smooth morphism of relative dimension $2$. 
    Then, $X$ is the iterated blow-up of $S$-points from  a smooth $S$-scheme $Y$ which either admits a Mori fibre space structure with smooth base or is a minimal model. 
\end{corollary}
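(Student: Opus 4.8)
The plan is to invoke \autoref{thm: running_MMP} for the birational content and then upgrade the blow-up centers using property \ref{item:Minkowski}. First I would apply \autoref{thm: running_MMP} to the smooth family $X \to S$ to produce a $K_X$-MMP
$$ X = X_0 \xrightarrow{\varphi_1} X_1 \xrightarrow{\varphi_2} \cdots \xrightarrow{\varphi_n} X_n = Y, $$
in which every $X_i \to S$ is smooth and projective, the end product $Y$ either admits a Mori fibre space structure over a smooth base or satisfies $K_Y$ nef (hence, being smooth, is a minimal model in the sense of the Notation), and each $\varphi_i$ exhibits $X_{i-1}$ as the blow-up of $X_i$ along a regular closed subscheme $Z_i \subseteq X_i$ that is \'etale over $S$. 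This already reduces the statement to identifying each center $Z_i$ with a disjoint union of sections of $X_i \to S$.

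The key step is to promote \textquotedblleft$Z_i$ \'etale over $S$\textquotedblright\ to \textquotedblleft$Z_i$ is a disjoint union of $S$-points.\textquotedblright\ Since $\varphi_i$ is a divisorial contraction of a relative surface (contracting, fibrewise, a $(-1)$-curve to a point), the center $Z_i$ has codimension $2$ in the $3$-fold $X_i$, hence relative dimension $0$ over $S$; in particular $Z_i \to S$ is quasi-finite. As $Z_i$ is closed in $X_i$ and $X_i \to S$ is proper, the morphism $Z_i \to S$ is proper, so by quasi-finiteness it is finite. Combined with the \'etaleness furnished by \autoref{thm: running_MMP}, this shows $Z_i \to S$ is a \emph{finite} \'etale cover.

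At this point property \ref{item:Minkowski} enters: since $\pi_1(S) = 0$, the category of finite \'etale $S$-schemes is equivalent to the category of finite sets, so every finite \'etale cover of $S$ is a finite disjoint union of copies of $S$. Hence each $Z_i$ is a disjoint union of sections $S \to X_i$, that is, a disjoint union of $S$-points with disjoint (regular) images. Blowing up a disjoint union of pairwise disjoint sections agrees with iteratively blowing up the individual sections, so each $\varphi_i$ is an iterated blow-up of $S$-points. Composing $\varphi_1, \dots, \varphi_n$ then exhibits $X$ as an iterated blow-up of $S$-points starting from $Y$, which is exactly the assertion.

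I expect the only genuinely delicate point to be the finiteness of $Z_i \to S$: this is what turns the \'etale-over-$S$ conclusion of the MMP into a \emph{finite}-\'etale cover, which is precisely where the hypothesis $\pi_1(S)=0$ can act. The remaining content (termination, smoothness of the intermediate models, and the structure of $Y$) is carried entirely by \autoref{thm: running_MMP}, so no further birational input is needed.
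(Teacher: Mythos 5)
Your proposal is correct and follows exactly the route the paper intends: the paper states this corollary without proof as an immediate consequence of \autoref{thm: running_MMP}, and your argument supplies precisely the implicit details — that each center $Z_i$, being \'etale over $S$ and closed in the projective $S$-scheme $X_i$, is finite \'etale over $S$, and hence, since $\pi_1(S)=0$ by \ref{item:Minkowski}, a disjoint union of sections, so each $\varphi_i$ is an iterated blow-up of $S$-points. Your observation that finiteness (proper plus quasi-finite) is the one point requiring verification is exactly right, and the rest is carried by \autoref{thm: running_MMP} as you say.
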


Let us observe that the projectivity assumption on $X \to S$ is necessary in order to run the MMP, even for smooth families of proper surfaces. 
Recall that for smooth surfaces over a field, properness is equivalent to projectivity by Zariski--Goodman \cite[Theorem 1.28]{Bad01}. 
However, this equivalence no longer holds for smooth proper families of surfaces over Dedekind domains.

\begin{example}
We construct an example of a smooth, proper family of surfaces $Y \to \Spec(\mathbb{Z})$ for which a $K_Y$-MMP cannot be run (and, in particular, $Y \to \Spec(\mathbb{Z})$ is not projective). This is inspired by Hironaka's example of a proper non-projective threefold over the complex numbers \cite[Example 3.4.1]{Har77}.

Let $X_0$ be the blow-up of $\mathbb{P}^2_\mathbb{Q}$ at $[1:0:0],[0:1:0],[0:0:1]$.
Pick two distinct prime numbers $p$ and $q$
and consider the three $\mathbb{Z}$-points $P_1 = [1:0:0],P_2 = [1:pq:0],P_3 =[1:pq:(pq)^2]$. 
Let $X'$ be the model of $X_0$ over $\mathbb{Z}\big[\frac{1}{q} \big]$ obtained by blowing-up $P_1$ in $\mathbb{P}^2_{\mathbb{Z}[\frac{1}{q}]}$, then the strict transform of $P_2$, and finally the strict transform of $P_3$.
The surface $X'_p$ obtained at the reduction mod $p$ is a weak del Pezzo surface of degree $6$ with an $A_2$ and an $A_1$-configuration of $(-2)$-curves.
We repeat the above construction away from the prime $q$ to get a model $X''$ of $X_0$ over $\mathbb{Z}\big[\frac{1}{q}\big]$. Over $\mathbb{Z}\big[\frac{1}{pq}\big]$, we can glue $X'$ and $X''$ along the (resolution of the) standard quadratic Cremona transformation centered at $P_1,P_2,P_3$ to construct a scheme $Y$ with a smooth proper morphism $Y \to \Spec(\mathbb{Z})$.

We show that we cannot run a $K_Y$-MMP. As $\rho(Y)=3$, every extremal contraction must be birational by \autoref{thm: full_Pic} (as Mori fibre spaces of surfaces over an algebraically closed field have Picard rank at most 2). However no birational morphism contracting the model of a fixed $(-1)$-curve in $X'$ extends to a birational contraction of $X''$ as the quadratic transformation interchanges pairs of $(-1)$-curves, thus concluding.
\end{example}

\section{Smooth models of surfaces of negative Kodaira dimension} \label{sec: kod_-infty}
The goal of this section is to present the classification of smooth projective models of surfaces of negative Kodaira dimension over Dedekind schemes $S$ with trivial \'etale fundamental group. The following theorem gives the more refined statements we get under extra assumptions on $S$.

\begin{theorem} \label{thm: class_kod_negative}
Assume that $S$ satisfies \ref{item:Minkowski}.
    Let $X \to S$ be a projective smooth contraction of relative dimension 2 admitting a Mori fibre space structure $X \to B$. 
    Then, one of the following three cases occurs:
    \begin{enumerate}
        \item $X$ is a $\mathbb{P}^2$-bundle over $S$ in the \'etale topology. Moreover:
        \begin{enumerate}
        \item[(a)] If $S$ satisfies \Bref{3}, then $X \simeq \mathbb{P}_S(\mathcal{F})$ for a vector bundle of rank $3$ on $S$.
        \item[(b)] If $S$ satisfies \Bref{3} and \PIDref{3}, then $X \simeq \mathbb{P}^2_S$.
        \end{enumerate}
        \item \label{hirzebruch_case} $B$ is a smooth conic bundle over $S$ and $X$ is a smooth conic bundle over $B$. Moreover:
        \begin{enumerate}
        \item[(a)] If $S$ satisfies \Bref{2}, then both conic bundles are projectivizations of rank $2$ vector bundles.
        \item[(b)] If $S$ satisfies \Bref{2} and \PIDref{2}, then $B \simeq \mathbb{P}^1_S$ and $X \simeq \mathbb{P}_S(\mathcal{E})$, where $\mathcal{E}$ is a vector bundle of rank $2$ on $B$.
        \item[(c)] If $S$ is the spectrum of a Euclidean domain satisfying \Bref{2}, then (b) holds and $\mathcal{E}$ is an extension of two line bundles.
        \end{enumerate}
        \item $B$ is smooth over $S$, $p_a(B_K)>0$ and $X \to B$ is a smooth conic bundle.
    \end{enumerate}
\end{theorem}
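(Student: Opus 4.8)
The plan is to read off the structure of $X$ from the base $B$ of the Mori fibre space, organising the trichotomy by the relative dimension $\dim(B/S)\in\{0,1\}$ and, when it equals $1$, by the genus $p_a(B_K)$. Throughout I use that, by \autoref{thm: running_MMP}, $B$ is smooth and projective over $S$, that $B$ is integral (as $X$ is integral and $X\to B$ is a contraction), and that by \autoref{prop: easy_bpf}(ii) the contraction $X\to B$ restricts on each closed fibre to the contraction $X_s\to B_s$ of the corresponding $K_{X_s}$-negative extremal ray. Since $X_K\to B_K$ is a fibre-type extremal contraction of the smooth projective surface $X_K$, we have $\dim(B_K)\in\{0,1\}$, hence $\dim(B/S)\in\{0,1\}$. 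If $\dim(B/S)=0$, then $B\to S$ is finite étale with $B$ integral, so \ref{item:Minkowski} forces $B\cong S$.

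\emph{Case $B\cong S$.} Each closed fibre $X_s$ is a smooth del Pezzo surface over the perfect field $k(s)$, so $b_1(X_K)=0$ and $h^2(X_s,\mathcal{O}_{X_s})=0$, and \autoref{thm: full_Pic} applies: $\Pic_{X/S}\cong\underline{\Pic(X_{\overline K})}$ is constant. Because the fibres are geometrically integral (\autoref{lem: global_section}), the restriction $N^1(X/S)\to N^1(X_K)$ is an isomorphism, so $\rho(X_K)=\rho(X/S)=1$; and constancy of the Picard scheme makes the Galois action on $\Pic(X_{\overline K})$ trivial, whence $\rho(X_{\overline K})=1$. A smooth del Pezzo surface of geometric Picard rank $1$ is $\mathbb{P}^2$, so $X\to S$ is a form of $\mathbb{P}^2_S$. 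The generator $\sigma\in\Pic_{X/S}(S)\cong\mathbb{Z}$ restricting to $\mathcal{O}(1)$ on geometric fibres lifts to a line bundle on $X$ up to an obstruction $\partial(\sigma)\in\Br(S)$; as $3\sigma=-K_{X/S}$ is a line bundle, $\partial(\sigma)$ is $3$-torsion, so it dies on an étale cover and $X\to S$ is a $\mathbb{P}^2$-bundle in the étale topology. Under \Bref{3} we get $\partial(\sigma)=0$, hence $\sigma=[L]$ and $X\cong\mathbb{P}_S(\pi_*L)$ for the structure morphism $\pi\colon X\to S$, with $\pi_*L$ locally free of rank $3$ (giving (a)); adjoining \PIDref{3} trivialises $\pi_*L$ and yields $X\cong\mathbb{P}^2_S$ (giving (b)).

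\emph{Case $\dim(B/S)=1$.} Here $B\to S$ is a smooth family of curves and I claim $X\to B$ is a smooth conic bundle. On each closed fibre, $X_s\to B_s$ is a relatively minimal conic bundle on a smooth surface with $\rho(X_s/B_s)=1$; a degenerate fibre would furnish a fibral $(-1)$-curve contradicting minimality, so every geometric fibre is a smooth conic, i.e.\ $\mathbb{P}^1$. As $X$ and $B$ are regular with one-dimensional fibres, $X\to B$ is flat, and with smooth fibres it is a smooth conic bundle. By \autoref{cor: pa_constant}, $p_a(B_s)$ is constant, so $p_a(B_K)=0$ gives case (2) and $p_a(B_K)>0$ gives case (3), where nothing further is claimed. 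In case (2), $B\to S$ is a smooth conic bundle of relative genus $0$, so \autoref{lem: smooth_curves_g0} gives $B\cong\mathbb{P}_S(E)$ under \Bref{2} and $B\cong\mathbb{P}^1_S$ under \Bref{2} and \PIDref{2}. The second bundle $X\to B$ is a smooth conic bundle, so its class $\beta\in\Br(B)$ has period dividing $2$; since $B$ is a projective bundle over the regular scheme $S$ we have $\Br(B)\cong\Br(S)$, whence $\beta\in\Br(B)[2]\cong\Br(S)[2]=0$ under \Bref{2} and $X\cong\mathbb{P}_B(\mathcal{E})$ for a rank-$2$ bundle $\mathcal{E}$ on $B$; this proves (a) and (b). For (c), a Euclidean domain is a PID, so (b) applies with $B\cong\mathbb{P}^1_R$, and restricting $\mathcal{E}$ to $\mathbb{P}^1_K$ and applying Grothendieck's splitting, a line-bundle summand of $\mathcal{E}_K$ extends (using that $R$ is a PID) to a saturated sub-line-bundle of $\mathcal{E}$ with locally free quotient, exhibiting $\mathcal{E}$ as an extension of two line bundles.

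I expect the genuinely delicate points to be the Brauer-theoretic ones rather than the geometric skeleton. The main obstacle is the identification $\Br(\mathbb{P}_S(E))\cong\Br(S)$ (the Brauer group of a projective bundle over a regular base), which is what converts \Bref{2} on $S$ into the vanishing of the conic-bundle class $\beta\in\Br(B)$ and hence powers case (2)(a),(b); one must also check that $\beta$ indeed has period $2$ and lands in this group. A secondary technical point is case (2)(c), where one must verify that the sub-line-bundle of $\mathcal{E}$ extending the Grothendieck splitting can be chosen with locally free quotient on $\mathbb{P}^1_R$. By contrast, the fibrewise identifications (geometric fibres $\mathbb{P}^2$, respectively smooth conics) follow cleanly from \autoref{prop: easy_bpf}, relative minimality, and the constancy of the Picard scheme in \autoref{thm: full_Pic}.
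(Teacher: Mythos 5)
Your skeleton (trichotomy by $\dim(B/S)$ and $p_a(B_K)$, \autoref{thm: full_Pic} for the $B\simeq S$ case, Brauer-class vanishing under \Bref{2}/\Bref{3} via $\Br$ of a projective bundle equalling $\Br$ of the base) matches the paper's proof, but there are two genuine gaps. First, in the $\dim(B/S)=1$ case your exclusion of degenerate conic fibres rests on the assertion $\rho(X_s/B_s)=1$, justified only by ``relative minimality''. The Mori fibre space condition gives $\rho(X/B)=1$ for the \emph{total space}; it does not pass to closed fibres, since the Picard rank of $X_{\bar s}$ can a priori exceed that of $X_{\bar K}$, in which case $X_s\to B_s$ would contract a face rather than a ray and a degenerate fibre (two $(-1)$-curves, proportional in $N_1(X/B)$ but independent in $N_1(X_s/B_s)$) is not excluded. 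Note also that \autoref{prop: easy_bpf}(ii) only identifies $B_s$ with $\Proj_{k(s)} R(X_s,L_s)$; it does not say $X_s\to B_s$ is an extremal contraction. The paper closes exactly this hole by invoking \autoref{thm: full_Pic} again in this case: constancy of $\Pic_{X/S}$ forces every closed fibre $X_{\bar s}$ to have geometric Picard rank $2$, whence $X_s\to B_s$ is a $\mathbb{P}^1$-bundle. You use the constancy of the Picard scheme in the $B\simeq S$ case but drop it precisely where it is again needed.

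Second, your proof of (2)(c) does not work as stated. Extending a line-bundle summand of $\mathcal{E}_K$ to its saturation $\mathcal{L}\subseteq\mathcal{E}$ produces a sub-line-bundle whose quotient $\mathcal{E}/\mathcal{L}$ is torsion-free of rank $1$, but on the two-dimensional regular scheme $\mathbb{P}^1_R$ a torsion-free rank-$1$ sheaf is a twist of the ideal sheaf of a codimension-$2$ subscheme and need \emph{not} be locally free; saturation alone never rules out these embedded points. Arranging a sub-line-bundle with locally free quotient for \emph{every} rank-$2$ bundle is precisely the nontrivial content of the result the paper cites, namely Hanna's theorem \cite{Han78}, whose proof uses the Euclidean algorithm — which is why the hypothesis in (c) is ``Euclidean domain'' and not merely ``PID'', contrary to your parenthetical ``using that $R$ is a PID''. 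The remaining points of your proposal (the $3$-torsion Brauer obstruction for the $\mathbb{P}^2$-bundle case, flatness of $X\to B$ by miracle flatness, $\Br(\mathbb{P}_S(E))\simeq\Br(S)$ as in \cite[Corollary 6.1.4]{brauergroupbook}, and the trivialisation under \PIDref{2}, \PIDref{3}) agree with the paper's argument.
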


\begin{proof}
If $B \simeq S$, then by \autoref{thm: full_Pic} we have that $X$ is a $\mathbb{P}^2$-bundle over $S$ in the \'etale topology. If $S$ satisfies \Bref{3}, then the associated Brauer class is trivial and if, additionally, \PIDref{3}, the associated vector bundle is trivial. This yields Claims (1), (1) (a), and (1) (b).

    If the base $B$ has relative dimension 1 over $S$, the fibres $B_s$ are normal by \autoref{thm: smooth_after_MMP} and thus regular.
    As $S$ satisfies \ref{item:Minkowski}, \autoref{thm: full_Pic} implies that $\Pic(X)$ is constant and thus every fibre $X_s \to B_s$ over $s \in S$ has geometric Picard rank $2$ and thus it is a $\mathbb{P}^1$-bundle over $B_{k(s)}$. In particular, $X \to B$ is a smooth conic bundle and thus corresponds to a class $\alpha \in \Br(B)[2]$. In particular, this yields Claim (3).

    If $p_a(B_K) = 0$, then the properties of $B$ in Claim (2) follow from \autoref{lem: smooth_curves_g0}. For Claim (2) (a), we observe that if $S$ satisfies \Bref{2}, then ${\rm Br}(\mathbb{P}_S(\mathcal{F}))[2] = 0$ for every vector bundle $\mathcal{F}$ of rank $2$ on $S$ by (the same proof as) \cite[Corollary 6.1.4]{brauergroupbook}. Claim (2) (b) is then immediate. If $S$ is the spectrum of an Euclidean domain, the description in terms of extension of two line bundles follows from \cite[Corollary, page 326]{Han78}.
\end{proof}

\begin{remark}
    Apart from $\mathbb{Z}$, a typical geometric example when we can apply \autoref{thm: class_kod_negative} is $S= \mathbb{P}^1_k$ where $k$ is an algebraically closed field. 
   If the characteristic of $k$ is zero, then $S=\mathbb{A}^1_k$ is simply connected with trivial Brauer group and thus \autoref{thm: class_kod_negative} is applicable, unlike the characteristic $p>0$ case where the affine line is not simply connected.
\end{remark}

\subsection{Models of Hirzebruch surfaces over Dedekind domains}
In this section, we describe in more detail the surfaces of case (2) of \autoref{thm: class_kod_negative}. 
First, we observe that Hirzebruch surfaces that are fiberwise of the same type are actually constant.

\begin{lemma} \label{lem: constancy_Hirzebruch}
    Suppose that $S$ satisfies \PIDref{1}.
    Let $\mathcal{E}$ be a rank $2$ vector bundle on $\mathbb{P}^1_S$ and let $\pi \colon X=\mathbb{P}_{\mathbb{P}^1_S}(\mathcal{E}) \to \mathbb{P}^1_S$ be its projectivisation.
    Let $n \geq 0$ and suppose that for all points $s \in S$, the fibre $X_s$ satisfies $X_s \simeq \mathbb{F}_n$.
    Then $X=\mathbb{P}_{\mathbb{P}^1_S}(\mathcal{E}) \simeq \mathbb{P}_{\mathbb{P}^1_S}(\mathcal{O}_{\mathbb{P}^1_S} \oplus \mathcal{O}_{\mathbb{P}^1_S}(-n))$. 
\end{lemma}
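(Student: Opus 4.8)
The plan is to reduce the statement to showing that, after a suitable twist, $\mathcal E$ itself splits as $\mathcal O_{\mathbb P^1_S} \oplus \mathcal O_{\mathbb P^1_S}(-n)$. Recall that $\mathbb P_{\mathbb P^1_S}(\mathcal E) \cong \mathbb P_{\mathbb P^1_S}(\mathcal E \otimes L)$ for every line bundle $L$ on $\mathbb P^1_S$, and that under \PIDref{1} one has $\Pic(\mathbb P^1_S) = \Pic(S) \times \mathbb Z = \mathbb Z$, so every such $L$ is some $\mathcal O_{\mathbb P^1_S}(a)$. First I would compute the fibrewise splitting type: over each point $s$ Grothendieck's theorem gives $\mathcal E_s \cong \mathcal O(a_s) \oplus \mathcal O(a_s - n)$ (the gap is $n$ by the hypothesis $X_s \cong \mathbb F_n$), and since $\det \mathcal E$ is a line bundle on $\mathbb P^1_S$ its fibrewise degree $d = 2a_s - n$ is independent of $s$; hence $a_s = (d+n)/2 =: a$ is constant. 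Twisting $\mathcal E$ by $\mathcal O_{\mathbb P^1_S}(-a)$ I may therefore assume $\mathcal E_s \cong \mathcal O \oplus \mathcal O(-n)$ for all $s$, and it remains to prove $\mathcal E \cong \mathcal O \oplus \mathcal O(-n)$.

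Write $\pi \colon \mathbb P^1_S \to S$. If $n = 0$, then $h^0(X_s, \mathcal E_s) = 2$ is constant, so by cohomology and base change (Grauert) $\pi_* \mathcal E$ is locally free of rank $2$ with formation commuting with base change; by \PIDref{1} (equivalently, $R$ a PID, hence also \PIDref{2}) we get $\pi_*\mathcal E \cong \mathcal O_S^{\oplus 2}$, and the evaluation map $\pi^*\pi_*\mathcal E \to \mathcal E$ is an isomorphism on every fibre, hence an isomorphism, giving $\mathcal E \cong \mathcal O^{\oplus 2}$. If $n \geq 1$, then $h^0(X_s,\mathcal E_s) = 1$ is constant, so $\pi_*\mathcal E$ is a line bundle, trivial by \PIDref{1}, and the evaluation map produces a nonzero morphism $\mathcal O_{\mathbb P^1_S} \to \mathcal E$ which on each fibre is the inclusion of the $\mathcal O$-summand, i.e. a subbundle inclusion with locally free cokernel $\mathcal O(-n)$. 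Since a morphism of vector bundles that is fibrewise a subbundle inclusion with locally free cokernel is globally so, I obtain a short exact sequence $0 \to \mathcal O_{\mathbb P^1_S} \to \mathcal E \to \mathcal Q \to 0$ with $\mathcal Q$ a line bundle satisfying $\mathcal Q_s \cong \mathcal O(-n)$; as $\Pic(\mathbb P^1_S) = \mathbb Z$ this forces $\mathcal Q \cong \mathcal O(-n)$.

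The main point — and the only genuine obstacle — is then to show that this extension splits. Its class lies in $\Ext^1_{\mathbb P^1_S}(\mathcal O(-n), \mathcal O) \cong H^1(\mathbb P^1_S, \mathcal O(n))$. Since $n \geq 1$ one has $R^1\pi_* \mathcal O(n) = 0$ and $\pi_* \mathcal O(n) \cong \mathcal O_S^{\oplus (n+1)}$, so the Leray spectral sequence gives $H^1(\mathbb P^1_S, \mathcal O(n)) \cong H^1(S, \mathcal O_S)^{\oplus (n+1)}$. Because $S = \Spec R$ is affine this group vanishes, the sequence splits, and $\mathcal E \cong \mathcal O \oplus \mathcal O(-n)$, as desired. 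I expect this final vanishing to be where the hypotheses really bite: it is the affineness of $S$ (i.e. working over a Dedekind \emph{domain}) that guarantees $H^1(S, \mathcal O_S) = 0$. For a general excellent Dedekind scheme satisfying only \PIDref{1} this cohomology can be nonzero — for instance on the punctured spectrum of $\mathbb C[[x,y]]$ — and then indecomposable bundles $\mathcal E$ with all fibres of type $\mathbb F_n$ genuinely occur, so an input beyond \PIDref{1} (such as affineness) seems indispensable for the splitting.
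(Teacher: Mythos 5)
Your proof is correct (over an affine base) and ends at exactly the same place as the paper's, but it builds the key exact sequence by a different mechanism. Both arguments reduce, after a twist, to producing a short exact sequence $0 \to \mathcal{O}_{\mathbb{P}^1_S} \to \mathcal{E} \to \mathcal{O}_{\mathbb{P}^1_S}(-n) \to 0$ and then invoking the vanishing of $\Ext^1(\mathcal{O}_{\mathbb{P}^1_S}(-n), \mathcal{O}_{\mathbb{P}^1_S}) = H^1(\mathbb{P}^1_S,\mathcal{O}_{\mathbb{P}^1_S}(n))$. The paper obtains the sequence geometrically: it normalizes $\mathcal{E}$ so that $\mathcal{E}_K \simeq \mathcal{O} \oplus \mathcal{O}(-n)$, takes the closure $\Sigma$ of the negative section of the generic fibre, and uses constancy of intersection numbers ($\Sigma_s^2=-n$, $\Sigma_s\cdot F_s=1$) to show $\Sigma_s$ is the negative section on every closed fibre, so that $\Sigma$ is a genuine section yielding the quotient $\mathcal{E}\twoheadrightarrow \mathcal{O}(-n)$. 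You instead work cohomologically: you pin down the fibrewise splitting type via $\det\mathcal{E}$, normalize so $h^0(\mathcal{E}_s)$ is constant, and use Grauert plus the evaluation map to produce the sub-line-bundle $\mathcal{O}\hookrightarrow\mathcal{E}$ (treating $n=0$ separately via $\pi^*\pi_*\mathcal{E}\xrightarrow{\sim}\mathcal{E}$ and \PIDref{2}, which for affine $S$ does follow from \PIDref{1} by the PID structure theorem, as the paper notes). Your route avoids intersection theory on the closed fibres at the cost of a base-change argument; the paper's route avoids the case split at $n=0$. Both are complete.

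Your closing observation is also sharp and worth keeping: the final vanishing $H^1(\mathbb{P}^1_S,\mathcal{O}(n)) = H^1\big(S,\mathcal{O}_S^{\oplus(n+1)}\big) = 0$ for $n\geq 0$ really does use $H^1(S,\mathcal{O}_S)=0$, i.e.\ affineness of $S$, and not just \PIDref{1}. The paper's proof asserts this vanishing for all $n\geq -1$ without comment; for a non-affine Dedekind scheme such as the punctured spectrum of $\mathbb{C}[[x,y]]$ (which the paper itself exhibits as satisfying \PIDref{i} for all $i$, while $H^1(S,\mathcal{O}_S)=H^2_{\mathfrak{m}}(\mathbb{C}[[x,y]])\neq 0$), a nonzero class in $\Ext^1(\mathcal{O}(-n),\mathcal{O})$ gives, for $n\geq 1$, a bundle $\mathcal{E}$ with every fibre of type $\mathbb{F}_n$ but $\mathcal{E}$ not isomorphic to any twist of $\mathcal{O}\oplus\mathcal{O}(-n)$ (a splitting $\mathcal{O}\to\mathcal{E}\to\mathcal{O}$ would have to be multiplication by a unit since $\Hom(\mathcal{O},\mathcal{O}(-n))=0$ and the quotient must be torsion-free). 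So the lemma should be read, as the surrounding subsection's title indicates, over Dedekind domains; your identification of where affineness bites is exactly right.
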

\begin{proof}
    As $S$ satisfies \PIDref{1}, we have $\Pic(\mathbb{P}^1_{S}) = \mathbb{Z}[\mathcal{O}(1)]$.
    We can replace $\mathcal{E}$ by a twist with a certain $\mathcal{O}_{\mathbb{P}^1_S}(l)$ and suppose, without loss of generality, that $\mathcal{E}_K \simeq \mathcal{O}_{\mathbb{P}^1_K} \oplus \mathcal{O}_{\mathbb{P}^1_K}(-n)$. 
    Consider the section $\Sigma_K$ of $\mathbb{P}_{\mathbb{P}^1_K}(\mathcal{E}_K)$ of self-intersection $(-n)$.
    The closure $\Sigma$ of $\Sigma_K$ is a divisor and for $s \in S$ a closed point, the fiber $\Sigma_s$ of $\Sigma$ is an effective divisor of self-intersection $(-n)$. As $X_s \simeq \mathbb{F}_n$, we can write
    $
    \Sigma_s \sim  aF_s + b\Sigma_s',
    $
    with $a,b \in \mathbb{Z} \geq 0$, $F_s$ a fiber of the ruling of $X_s$, and $\Sigma_s'$ the section of square $(-n)$. Now, by the constancy of intersection numbers and since $F_s$ is the specialization of a fiber of the ruling of $X_K$, we have $\Sigma_s^2 = -n$ and $\Sigma_s \cdot F_s = 1$. This implies that $a = 0$ and $b = 1$, that is, $\Sigma_s$ is linearly equivalent and hence equal to $\Sigma_s'$.

    In particular, $\Sigma$ is a section of $\pi$ and thus we have a natural associated quotient $\mathcal{E} \to \mathcal{L} \to 0$.
    As $\Pic(\mathbb{P}^1_{S}) = \mathbb{Z}[\mathcal{O}(1)]$ and $\mathcal{L}_K \simeq \mathcal{O}_{\mathbb{P}^1_K}(-n)$, we deduce that $\mathcal{L} \simeq \mathcal{O}_{\mathbb{P}^1_S}(-n)$.
    Since the kernel of $\mathcal{E} \to \mathcal{L}$ is invertible and generically of degree $0$, it is isomorphic to $\mathcal{O}_{\mathbb{P}^1_S}$.
    As $\Ext^1(\mathcal{O}_{\mathbb{P}^1_S}(-n), \mathcal{O}_{\mathbb{P}^1_S})=H^1(\mathbb{P}^1_S, \mathcal{O}_{\mathbb{P}^1_S}(n))=0$ for $n \geq -1$, we conclude that $\mathcal{E} \simeq \mathcal{O}_{\mathbb{P}^1_S} \oplus \mathcal{O}_{\mathbb{P}^1_S}(-n)$.
\end{proof}

Next, we want to write down explicit equations for Hirzebruch surfaces over $S$ in the affine case. As a first step, we have the following application of the Quillen--Suslin theorem \cite{quillen,Suslin}:

\begin{proposition}
Assume that $R$ satisfies \PIDref{1}. Let $\mathcal{E}$ be a rank $2$ vector bundle on $\mathbb{P}^1_R$ and let $\pi: X = \mathbb{P}_{\mathbb{P}^1_R}(\mathcal{E}) \to \mathbb{P}^1_R$ be its projectivisation. Then, there exists a closed immersion $X \hookrightarrow \mathbb{P}^1_R \times \mathbb{P}^3_R$ such that $\pi$ is induced by the first projection.
\end{proposition}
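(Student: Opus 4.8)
The plan is to realise the embedding as the projectivisation of a surjection onto a twist of $\mathcal{E}$. In the convention $\mathbb{P}_{\mathbb{P}^1_R}(-)=\Proj_{\mathbb{P}^1_R}\Sym^\bullet(-)$, twisting by a line bundle leaves the projective bundle unchanged, so $X=\mathbb{P}_{\mathbb{P}^1_R}(\mathcal{E})\cong\mathbb{P}_{\mathbb{P}^1_R}(\mathcal{E}(a))$ for every $a\in\mathbb{Z}$, compatibly with $\pi$. Any surjection $\mathcal{O}_{\mathbb{P}^1_R}^{\oplus 4}\twoheadrightarrow\mathcal{E}(a)$ induces a surjection of symmetric algebras and hence a closed immersion
$$ X\cong\mathbb{P}_{\mathbb{P}^1_R}(\mathcal{E}(a))\hookrightarrow\mathbb{P}_{\mathbb{P}^1_R}\big(\mathcal{O}_{\mathbb{P}^1_R}^{\oplus 4}\big)=\mathbb{P}^1_R\times_R\mathbb{P}^3_R $$
over $\mathbb{P}^1_R$; as the structure morphism of $\mathbb{P}_{\mathbb{P}^1_R}(\mathcal{O}^{\oplus 4})$ is the first projection $\pr_1$, this exhibits $\pi$ as $\pr_1$ restricted to $X$. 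Thus it suffices to generate some twist $\mathcal{E}(a)$ by four global sections.

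Since $\mathcal{O}_{\mathbb{P}^1_R}(1)$ is ample (the base $\Spec R$ being affine), Serre's theorem provides $a$ with $\mathcal{E}(a)$ globally generated, i.e.\ a surjection $\mathcal{O}_{\mathbb{P}^1_R}^{\oplus N}\twoheadrightarrow\mathcal{E}(a)$ for some $N$. This is where the Quillen--Suslin theorem enters: on the two standard charts $U_0=\Spec R[x]$ and $U_\infty=\Spec R[y]$, $y=x^{-1}$, the restrictions $\mathcal{E}|_{U_0}$ and $\mathcal{E}|_{U_\infty}$ are finitely generated projective modules of rank $2$ over $R[x]$ and $R[y]$, hence free since $R$ is a principal ideal domain by \PIDref{1}. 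In particular $X|_{U_i}$ is a trivial $\mathbb{P}^1$-bundle, and over each of these two-dimensional affine charts a rank-$2$ bundle is generated by $2+2=4$ elements by the Forster--Swan bound.

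The remaining and decisive step is to cut $N$ down to exactly $4$ uniformly on $\mathbb{P}^1_R$. This is a basic-element count: $\mathbb{P}^1_R$ is Noetherian of Krull dimension $2$ (as $R$ is Dedekind) and $\mathcal{E}(a)$ is locally free of rank $2$, so the basic number $\rank\mathcal{E}(a)+\dim\overline{\{\eta\}}$ attains its maximum $2+2=4$ at the generic point $\eta$ and is strictly smaller at points of positive codimension. The Eisenbud--Evans form of the Forster--Swan theorem then yields four sections of $\mathcal{E}(a)$ generating it everywhere; this is precisely the surjection $\mathcal{O}_{\mathbb{P}^1_R}^{\oplus 4}\twoheadrightarrow\mathcal{E}(a)$ sought above, and the value $4$ is exactly what forces the factor $\mathbb{P}^3_R$.

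The main obstacle is this last reduction. One cannot simply take four ``general'' sections, because the residue fields of $R$ (for example $R=\mathbb{Z}$) may be finite, so there is no infinite field of scalars in which to run a genericity argument; one is forced to use the prime-avoidance version of Eisenbud--Evans, valid over an arbitrary Noetherian base, and to check that the four chosen sections generate simultaneously on $U_0$ and on $U_\infty$ rather than on each chart separately.
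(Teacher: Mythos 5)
Your reduction is the same as the paper's: a surjection $\mathcal{O}_{\mathbb{P}^1_R}^{\oplus 4}\twoheadrightarrow\mathcal{E}(a)$ induces a closed immersion $\mathbb{P}(\mathcal{E})\cong\mathbb{P}(\mathcal{E}(a))\hookrightarrow\mathbb{P}_{\mathbb{P}^1_R}\big(\mathcal{O}_{\mathbb{P}^1_R}^{\oplus 4}\big)=\mathbb{P}^1_R\times_R\mathbb{P}^3_R$ compatible with the projections, so everything comes down to generating some twist by four global sections. The gap sits exactly at the step you call decisive. The Forster--Swan/Eisenbud--Evans theorem, including its prime-avoidance (``basic element'') form, is a statement about finitely generated modules over a Noetherian \emph{ring}, i.e.\ about coherent sheaves on \emph{affine} schemes. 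But $\mathbb{P}^1_R$ is not affine, and there is no standard version of the theorem for coherent sheaves on non-affine Noetherian schemes that you can invoke here; your phrase ``valid over an arbitrary Noetherian base'' conflates the base ring (which is indeed arbitrary Noetherian in Eisenbud--Evans) with the total space $\mathbb{P}^1_R$, which is where you need the conclusion. The difficulty you yourself flag --- finite residue fields kill general-position arguments, and sections chosen chart by chart need not be compatible --- is precisely what an affine-type theorem does not resolve: applying Forster--Swan on $U_0$ and on $U_\infty$ separately produces two unrelated generating sets with no mechanism to merge them into four \emph{global} sections. (As an aside, your ``$2+2=4$ by Forster--Swan on each chart'' is a red herring: by your own Quillen--Suslin observation the restrictions are free of rank $2$, so two generators suffice on each chart.)

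The fix is elementary and is exactly what the paper does, using the ingredient you set up but never exploited. By Quillen's theorem, $\mathcal{E}|_{D_+(x_i)}$ is free of rank $2$, generated by two sections $s_{i1},s_{i2}$. Since $H^0\big(D_+(x_i),\mathcal{E}\big)=\operatorname{colim}_m H^0\big(\mathbb{P}^1_R,\mathcal{E}(m)\big)$ via multiplication by $x_i^m$, for $m\gg 0$ all four local sections lift to global sections $x_i^m s_{ij}\in H^0\big(\mathbb{P}^1_R,\mathcal{E}(m)\big)$. On $D_+(x_i)$ the factor $x_i^m$ trivializes $\mathcal{O}_{\mathbb{P}^1_R}(m)$, so the two sections lifted from that chart already generate $\mathcal{E}(m)|_{D_+(x_i)}$; hence the four global sections generate everywhere, yielding the surjection $\mathcal{O}_{\mathbb{P}^1_R}^{\oplus 4}\twoheadrightarrow\mathcal{E}(m)$ with no general-position or basic-element argument at all. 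Replacing your third and fourth paragraphs by this lifting argument makes your proof correct and essentially identical to the paper's.
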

\begin{proof}
It suffices to prove that some twist $\mathcal{E}(n)$ can be generated by $4$ global sections. By \cite[Theorem 4]{quillen}, the restriction of $\mathcal{E}$ to a standard affine chart $D_+(x_i)$ of $\mathbb{P}^1_R$ is free, hence generated by two sections $s_{i1},s_{i2}$. Lifting the resulting $4$ local sections to global sections of a twist of $\mathcal{E}$, we get the result.
\end{proof}

In the case where $\pi$ has a section, which always holds if $R$ is Euclidean by \autoref{thm: class_kod_negative} (2) (c), we can reduce the number of generators to $3$ and find an explicit normal form of the resulting hypersurface.

\begin{lemma} \label{eq:hirzebruch}
Assume that $R$ satisfies \PIDref{1} and let $\pi: X = \mathbb{P}_{\mathbb{P}^1_{R}}(\mathcal{E}) \to \mathbb{P}^1_{R}$ be the projectivisation of a rank $2$ vector bundle. Further assume that $\pi$ admits a section $\Sigma$. 
Then, there exists an $n \geq 0$ and a closed immersion $X \hookrightarrow \mathbb{P}^1_{R} \times \mathbb{P}^2_{R}$ with image given by an equation of the form
$$
x_0^ny_0 + x_1^ny_1 + f(x_0,x_1)y_2 = 0,
$$
where $f(x_0,x_1) \in R[x_0,x_1]$ is a homogeneous polynomial of degree $n$ and $\Sigma$ is given by the equation $y_2 = 0$.
\end{lemma}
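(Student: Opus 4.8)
The plan is to turn the geometry into an explicit presentation of $\mathcal{E}$ by three global sections. A section $\Sigma$ of $\pi$ is the same datum as a rank-$1$ sub-bundle $\mathcal{L} \hookrightarrow \mathcal{E}$ with locally free quotient, giving a short exact sequence $0 \to \mathcal{L} \to \mathcal{E} \to \mathcal{Q} \to 0$ with $\mathcal{Q}$ invertible. Since $R$ satisfies \PIDref{1} it is a PID, so $\Pic(\mathbb{P}^1_R) = \mathbb{Z}\cdot\mathcal{O}_{\mathbb{P}^1_R}(1)$ and hence $\mathcal{L} \cong \mathcal{O}(a)$, $\mathcal{Q} \cong \mathcal{O}(b)$ for integers $a,b$. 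As $\mathbb{P}_{\mathbb{P}^1_R}(\mathcal{E}) = \mathbb{P}_{\mathbb{P}^1_R}(\mathcal{E} \otimes \mathcal{O}(-a))$ canonically and this twist fixes $\Sigma$, I would replace $\mathcal{E}$ by $\mathcal{E}(-a)$ and so assume $\mathcal{L} = \mathcal{O}$; setting $n := b-a$, the sequence becomes
$$
0 \to \mathcal{O} \xrightarrow{\ \sigma\ } \mathcal{E} \to \mathcal{O}(n) \to 0 .
$$
Here $n$ equals the self-intersection of $\Sigma$ inside any fibre, and I will use that $n \geq 0$ (equivalently, that $\mathcal{O}(n)$ is globally generated); see the final paragraph.

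Next I would generate $\mathcal{E}$ by exactly three sections, which is the improvement over the four sections coming from Quillen--Suslin in the preceding proposition. The key point is that $\mathcal{O}_{\mathbb{P}^1_R}(n)$ is globally generated by the \emph{two} sections $x_0^n, x_1^n$, since these have no common zero. Because $H^1(\mathbb{P}^1_R, \mathcal{O}) = 0$, the map $H^0(\mathcal{E}) \to H^0(\mathcal{O}(n))$ is surjective, so I can lift $x_0^n, x_1^n$ to $s_0, s_1 \in H^0(\mathcal{E})$. A fibrewise check (Nakayama) shows that $s_0, s_1, \sigma(1)$ generate $\mathcal{E}$: over each point $\sigma(1)$ spans $\mathcal{L}$, while at least one of $s_0, s_1$ maps to a generator of the quotient fibre. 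This yields a surjection $\mathcal{O}_{\mathbb{P}^1_R}^{\oplus 3} \twoheadrightarrow \mathcal{E}$ and hence a closed immersion $X = \mathbb{P}_{\mathbb{P}^1_R}(\mathcal{E}) \hookrightarrow \mathbb{P}_{\mathbb{P}^1_R}(\mathcal{O}^{\oplus 3}) = \mathbb{P}^1_R \times \mathbb{P}^2_R$ compatible with the first projection, in coordinates $y_0, y_1, y_2$ dual to $s_0, s_1, \sigma(1)$.

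To read off the equation, note that the kernel of $\mathcal{O}^{\oplus 3} \twoheadrightarrow \mathcal{E}$ is a line bundle of degree $-n$, hence $\cong \mathcal{O}(-n)$, and its inclusion is a single syzygy $(g_0, g_1, g_2)$ with $g_i$ homogeneous of degree $n$; the image of $X$ is then $\{g_0 y_0 + g_1 y_1 + g_2 y_2 = 0\}$. Projecting the relation $g_0 s_0 + g_1 s_1 + g_2\,\sigma(1) = 0$ to the quotient $\mathcal{O}(n)$ (where $\sigma(1) \mapsto 0$ and $s_i \mapsto x_i^n$) gives $g_0 x_0^n + g_1 x_1^n = 0$; as $x_0^n, x_1^n$ are coprime this forces $(g_0, g_1) = (x_1^n, -x_0^n)$ up to a unit, and then $g_2 = -h$ where $x_1^n s_0 - x_0^n s_1 = h\,\sigma(1)$ with $h \in R[x_0,x_1]$ homogeneous of degree $n$. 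A linear change of the $\mathbb{P}^2$-coordinates fixing $y_2$ (swapping $y_0, y_1$ and rescaling) puts the equation in the stated form $x_0^n y_0 + x_1^n y_1 + f y_2 = 0$ with $f := h$ of degree $n$. Finally $\{y_2 = 0\}$ cuts out precisely the sub-bundle $\langle \sigma(1)\rangle = \mathcal{L}$, i.e.\ the section $\Sigma$.

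The main obstacle is the non-negativity $n \geq 0$. The normal form genuinely requires it: the coefficients must be honest degree-$n$ forms, and the three-section argument needs $\mathcal{O}(n)$ to be globally generated, which holds exactly when $n \geq 0$. Since $n$ is the fibrewise self-intersection $\Sigma^2$, this is the statement that $\Sigma$ has non-negative self-intersection; it is automatic once $\Sigma$ is taken to be the section produced by the construction (the complementary, positive section), and it is the point one must be careful about if $\Sigma$ is prescribed in advance. The remaining verifications---global (not merely fibrewise) generation, and that the syzygy module is free of rank one---are routine consequences of Nakayama's lemma and $H^1(\mathbb{P}^1_R,\mathcal{O}) = 0$.
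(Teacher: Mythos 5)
Your treatment of the main case $n \geq 0$ is essentially the paper's proof: twist $\mathcal{E}$ so that the sub-line-bundle attached to $\Sigma$ is $\mathcal{O}_{\mathbb{P}^1_R}$, lift the two generating sections $x_0^n, x_1^n$ of $\mathcal{O}_{\mathbb{P}^1_R}(n)$ through $H^0(\mathcal{E}) \to H^0(\mathcal{O}_{\mathbb{P}^1_R}(n))$ using $H^1(\mathbb{P}^1_R,\mathcal{O}_{\mathbb{P}^1_R}) = 0$, obtain $\mathcal{O}_{\mathbb{P}^1_R}^{\oplus 3} \twoheadrightarrow \mathcal{E}$ with kernel $\mathcal{O}_{\mathbb{P}^1_R}(-n)$ by taking determinants, and read the equation off the degree-$n$ syzygy. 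Your identification of the syzygy (project the relation to $\mathcal{O}(n)$ and use coprimality of $x_0^n, x_1^n$) is, if anything, more transparent than the paper's, which writes the section as $(x_0^n+\lambda f)y_0+(x_1^n+\mu f)y_1+fy_2$ and removes $\lambda,\mu$ by the substitution $y_2 \mapsto y_2 - \lambda y_0 - \mu y_1$. One sub-point you dismiss as ``routine'' deserves a sentence: coprimality only yields $(g_0,g_1) = c\,(x_1^n,-x_0^n)$ for a scalar $c \in R$, and to see $c \in R^{\times}$ you need that the inclusion $\mathcal{O}(-n) \hookrightarrow \mathcal{O}^{\oplus 3}$ is nowhere vanishing (it is locally split since its cokernel $\mathcal{E}$ is locally free); if a prime divided $c$, the reduced syzygy would be a nonzero degree-$n$ form occupying only the $y_2$-slot, which for $n>0$ vanishes at some closed point of the fibre. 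This is not pure Nakayama, but it is easily supplied.

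The genuine gap is the case $n<0$, which your write-up never actually handles. You assert that $n \geq 0$ ``is automatic once $\Sigma$ is taken to be the section produced by the construction (the complementary, positive section)'', but your construction produces no complementary section: the only input is the prescribed $\Sigma$, and when $\deg\mathcal{Q} - \deg\mathcal{L} = n < 0$ your argument stops, since $\mathcal{O}(n)$ is not globally generated and nothing in the sub/quotient structure hands you a second section. The missing one-line idea, which is exactly how the paper disposes of this case, is the vanishing $\Ext^1\bigl(\mathcal{O}_{\mathbb{P}^1_R}(n),\mathcal{O}_{\mathbb{P}^1_R}\bigr) \simeq H^1\bigl(\mathbb{P}^1_R,\mathcal{O}_{\mathbb{P}^1_R}(-n)\bigr) = 0$ for $n \leq 0$: the extension splits, $\mathcal{E} \simeq \mathcal{O} \oplus \mathcal{O}(n)$, and swapping the two summands and twisting reduces to the case $n \geq 0$. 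Your caution about a prescribed $\Sigma$ is nonetheless well taken: after this swap it is the complementary section, not the original $\Sigma$, that becomes $\{y_2 = 0\}$ (in the split normal form, where one may take $f = 0$, the original $\Sigma$ is instead the locus $\{y_0 = y_1 = 0\}$), so the paper's proof also secures the final clause of the statement only after silently replacing $\Sigma$ by the complementary section. But unlike your proposal, the paper does prove the existence of the normal form for every $\mathcal{E}$ admitting a section; as written, your argument covers only those sections with non-negative fibrewise self-intersection.
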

\begin{proof}
Since $\pi$ admits a section, there exists a short exact sequence
$$
0 \to \mathcal{L} \to \mathcal{E} \to \mathcal{N} \to 0,
$$
where $\mathcal{L}$ and $\mathcal{N}$ are line bundles.
Since $\Pic(\mathbb{P}^1_{R}) = \mathbb{Z}[\mathcal{O}(1)]$, we may assume, after replacing $\mathcal{E}$ by a suitable $\mathcal{E}(n)$, that $\mathcal{L} = \mathcal{O}_{\mathbb{P}^1_{R}}$ and $\mathcal{N} = \mathcal{O}_{\mathbb{P}^1_{R}}(n)$ for some $n \in \mathbb{Z}$.

If $n \leq 0$, then ${\rm Ext}^1(\mathcal{O}_{\mathbb{P}^1_R}(n),\mathcal{O}_{\mathbb{P}^1_R}) \simeq H^1(\mathbb{P}^1_R, \mathcal{O}_{\mathbb{P}^1_R}(-n))=0$, so $\mathcal{E} \simeq \mathcal{O}_{\mathbb{P}^1_R} \oplus \mathcal{O}_{\mathbb{P}^1_R}(n)$. Replacing $\mathcal{E}$ by a suitable twist and swapping the two summands, we get a representation of $\mathcal{E}$ as an extension as above but with $n \geq 0$.

If $n = 0$, then $\mathcal{E} \simeq \mathcal{O}_{\mathbb{P}^1_{R}} \oplus \mathcal{O}_{\mathbb{P}^1_{R}}$ and the claim is clear.

For the case $n > 0$, choose coordinates $x_0,x_1$ on $\mathbb{P}^1_{R}$. 
Then, the global sections $x_0^n,x_1^ n \in H^0(\mathbb{P}^1_{R},\mathcal{O}_{\mathbb{P}^1_{R}}(n))$ generate $\mathcal{O}_{\mathbb{P}^1_{R}}(n)$. Since $H^1(\mathbb{P}^1_{R},\mathcal{O}_{\mathbb{P}^1_{R}}) = 0$, these two global sections lift to global sections $s_0,s_1 \in H^0(\mathbb{P}^1_{R},\mathcal{E})$. Let $s_2 \in H^0(\mathbb{P}^1_{R},\mathcal{O}_{\mathbb{P}^1_{R}}) \hookrightarrow H^0(\mathbb{P}^1_{R},\mathcal{E})$ be a non-zero global section. Then, we have a short exact sequence
$$
0 \to \mathcal{K} \to \mathcal{O}_{\mathbb{P}^1_R}^{\oplus 3} \overset{(s_0,s_1,s_2)}{\longrightarrow} \mathcal{E} \to 0
$$
for a line bundle $\mathcal{K}$ and we see that $\mathcal{K} \simeq \mathcal{O}_{\mathbb{P}^1_R}(-n)$ by taking determinants. This realizes $X$ as the zero locus of the section $s$ of $\mathcal{O}_{\mathbb{P}^1_R}(n) \boxtimes \mathcal{O}_{\mathbb{P}^2_R}(1)$ corresponding to the inclusion $\mathcal{K} \to \mathcal{O}_{\mathbb{P}^1_R}^{\oplus 3}$. Choosing projective coordinates $y_0,y_1,y_2$ on $\mathbb{P}^2_R$, we can write the section $s$ for our choice of generators as
$$
s = (x_0^n + \lambda f(x_0,x_1)) y_0 + (x_1^n + \mu f(x_0,x_1)) y_1 + f(x_0,x_1)y_2,
$$
where $f(x_0,x_1)$ is a homogeneous polynomial of degree $n$ coming from the section $s_2$ and $\lambda,\mu \in R$ are scalars. Then, the substitution $y_2 \mapsto y_2 - \lambda y_0 - \mu y_1$ yields the normal form of the statement.
\end{proof}

In particular, we can find equations for all Hirzebruch surfaces over $\mathbb{Z}$ in $\mathbb{P}^1_\mathbb{Z} \times \mathbb{P}^2_{\mathbb{Z}}$. Note that a single Hirzebruch surface can have equations of various bidegrees $(1,n)$. It is an interesting and difficult problem to determine the minimal such $n$ from the vector bundle $\mathcal{E}$ (see e.g. \cite{Smirnov15}).

\begin{example}
Let $\mathcal{E}$ be a vector bundle of rank $2$ on $\mathbb{P}^1_{\mathbb{Z}}$. If $\mathcal{E}_0 \simeq \mathcal{O}_{\mathbb{P}^1_{\mathbb{F}_\mathbb{Q}}}^{\oplus 2}$ and $\mathcal{E}_p \simeq \mathcal{O}_{\mathbb{P}^1_{\mathbb{F}_p}}(-i) \oplus \mathcal{O}_{\mathbb{P}^1_{\mathbb{F}_p}}(i)$ for $i \in \{0,1\}$ for all $p$, then by \cite{Smi16} there is a short exact sequence
$$
0 \to \mathcal{O}_{\mathbb{P}^1_\mathbb{Z}}(-2) \to \mathcal{E} \to \mathcal{O}_{\mathbb{P}^1_\mathbb{Z}}(2) \to 0
$$
and hence an equation of $\mathbb{P}(\mathcal{E})$ of bidegree $(1,4)$ in $\mathbb{P}^1_{\mathbb{Z}} \times \mathbb{P}^2_{\mathbb{Z}}$.
If $\mathcal{E}_0 \simeq \mathcal{O}_{\mathbb{P}^1_\mathbb{Q}} \oplus \mathcal{O}_{\mathbb{P}^1_\mathbb{Q}}(1)$ an analogous result could in principle be extracted from \cite{Iakovenko}.
\end{example}
\begin{example} \label{ex:equations} 
Let us spell out the equations of smooth models of Hirzebruch surfaces over the integers for small values of $n$ in the equation of \autoref{eq:hirzebruch}. 
Note that we can always use substitutions of the form $(y_0,y_1,y_2) \mapsto (y_0 + my_2, y_1 + m'y_2, y_2)$ for some integers $m,m'$ to kill the $x_0^n$ and $x_1^n$ terms in $f$.

\begin{itemize}
\item $n = 0$: In this case, $X \simeq \mathbb{P}^1_{\mathbb{Z}} \times \mathbb{P}^1_{\mathbb{Z}}$, embedded linearly in $\mathbb{P}^1_{\mathbb{Z}} \times \mathbb{P}^2_{\mathbb{Z}}$.
\item $n = 1$: Applying the above substitutions, we get the normal form
$$
x_0y_0 + x_1y_1 = 0.
$$
\item $n = 2$: In this case, the above substitutions lead to the normal forms
$$
x_0^2y_0 + x_1^2y_1 + m x_0x_1y_2 = 0
$$
for an integer $m$. 
\item $n \geq 2$: The normal form is 
$$ x_0^ny_0+x_1^ny_1+\big(\sum_{j=1}^{n-1} m_j x_0^jx_1^{n-j} \big)y_2 =0
$$
where $m_j$ are integer numbers.
\end{itemize}

\subsection{Elementary transformation of vector bundles on regular surfaces}

In this section we show \autoref{prop:Hirzebruch_surface_intro}. We show point \ref{item:parity} in \autoref{prop:types_of_Hirzebruch_surfaces}, point \ref{item:all_appear} in \autoref{ex:Hirzebruch_all_possible}, and point \ref{item:elementary_transf} in \autoref{prop:dec_elem_transf} and \autoref{prop:elementary_transf_to_blow_up}. 

\begin{notation}
\label{notation:space_elementary_transformation}
Consider the following situation.
\begin{itemize}
    \item $f : X= \bP^1_R \to R$ for a  Dedekind ring $R$,
    \item $\sE$ and $\sF$ are locally free sheaves of the same rank on $X$, 
    \item $Y= \bP_X(\sE)$ and $Z = \bP_X(\sF)$ are the associated projective bundles,
    \item $\pi: Y \to X$ and $\tau : Z \to X$ are the structure morphisms,  and
    \item $\eta$ is the generic point of $\Spec R$.
\end{itemize}
\end{notation}

\begin{definition}
\label{def:elementary_transformation}
Using \autoref{notation:space_elementary_transformation},  an \emph{elementary transformation} from $\sF$ to $\sE$ is a short exact sequence of the form
\begin{equation*}
\xymatrix{
0 \ar[r] & \sF \ar[r] & \sE \ar[r] & \iota_* \sL \ar[r] &  0
}
\end{equation*}
where 
\begin{itemize}
    \item $\iota : C \hookrightarrow X$ is an irreducible regular closed curve on $X$, 
    \item $\sL$ is a line bundle on $C$. 
\end{itemize}
We call $C$ the \emph{center} of the elementary transformation. 

We say that the elementary transformation is of \emph{fiber type} if $C= \bP^1_{x}$ for some closed point $x \in \Spec R$, and it is \emph{horizontal} otherwise. 
\end{definition}

\begin{remark}
Using \autoref{notation:space_elementary_transformation}, if $\sE$ is of rank $2$, then for every $x \in \Spec R$ we have 
\begin{equation*}
\sE|_{\bP^1_x}\simeq \sO_{\bP^1_x}(i) \bigoplus \sO_{\bP^1_x}(j),
\end{equation*}
where $\{i,j\}$ is uniquely determined. Hence, if we assume that $i\leq j$, then $j-i \geq 0$ is an invariant of $\sE$ at $x$. Let us say in this case that \emph{$\sE$ is of type $j-i$ at $x$} denoted $\type_x(\mathcal{E})$.

Similarly, we say that $Y$ has type $n$ at $x$ if $\sE$ has type $n$ at $x$. This is well-defined as $n$ is the degree of the Hirzebruch surface $Y_x \to \mathbb{P}^1_x$.
\end{remark}

\begin{remark}
For the analysis of the Hirzebruch surface $\mathbb{P}_R(\mathcal{E})$, we may replace $\mathcal{E}$ by a twist $\mathcal{E}(n)$. We call $\mathcal{E}$ \emph{normalized} if $\mathcal{E}_{\eta} \simeq \mathcal{O}_{\mathbb{P}^1_{\eta}}(-1) \oplus \mathcal{O}_{\mathbb{P}^1_{\eta}}(-n-1)$. Then, every $\mathcal{E}$ has a unique normalized twist.
\end{remark}

\begin{proposition} \label{prop:types_of_Hirzebruch_surfaces}
Using \autoref{notation:space_elementary_transformation}, assume that $\rk \sE=2$. Let $x \in \Spec(R)$. Then,
\begin{enumerate}
    \item  the difference ${\rm type}_x(\mathcal{E}) - {\rm type}_{\eta}(\mathcal{E})$ is an even non-negative integer, and
     \item if $\mathcal{E}$ is normalized, then ${\rm type}_x(\mathcal{E}) -  {\rm type}_{\eta}(\mathcal{E})= 2h^0(\mathbb{P}^1_x,\mathcal{E}|_{\mathbb{P}^1_x})$.
\end{enumerate}
\end{proposition}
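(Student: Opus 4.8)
The plan is to reduce everything to the explicit behaviour of splitting types on $\mathbb{P}^1$ over a field, together with a single semicontinuity input. Since $\type_x(\mathcal{E})$ is unchanged when $\mathcal{E}$ is twisted by $\mathcal{O}_{\mathbb{P}^1_R}(l)$, I would first normalize $\mathcal{E}$ so that $\mathcal{E}_\eta \simeq \mathcal{O}_{\mathbb{P}^1_\eta}(-1) \oplus \mathcal{O}_{\mathbb{P}^1_\eta}(-n-1)$, hence $\type_\eta(\mathcal{E}) = n$. Writing $\mathcal{E}|_{\mathbb{P}^1_x} \simeq \mathcal{O}(i_x) \oplus \mathcal{O}(j_x)$ with $i_x \le j_x$ (Grothendieck's splitting theorem holds over the possibly imperfect residue field $k(x)$), the first observation is that the fibrewise degree $d := i_x + j_x$ is independent of $x$: indeed $\chi(\mathbb{P}^1_x, \mathcal{E}|_{\mathbb{P}^1_x})$ is locally constant by flatness of $\mathcal{E}$ over $\Spec R$, and Riemann--Roch on $\mathbb{P}^1$ gives $\chi = d + 2$. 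Since $\type_x = j_x - i_x$ and $i_x + j_x = d$, the type has the same parity as $d$ for every $x$; in particular $\type_x(\mathcal{E}) - \type_\eta(\mathcal{E})$ is even, which settles the parity half of (1).

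For the non-negativity in (1) --- the one genuinely nontrivial point --- I would argue by upper semicontinuity of cohomology. For each integer $m$ the twist $\mathcal{E}(m)$ is coherent and flat over the Noetherian base $\Spec R$, so $x \mapsto h^0(\mathbb{P}^1_x, \mathcal{E}|_{\mathbb{P}^1_x}(m))$ is upper semicontinuous; as $\eta$ is the generic point, this gives $h^0(\mathbb{P}^1_x, \mathcal{E}|_{\mathbb{P}^1_x}(m)) \ge h^0(\mathbb{P}^1_\eta, \mathcal{E}_\eta(m))$ for every closed $x$ and every $m$. Setting $t := \type_x$, the identity $i_x + j_x = d = -n-2$ forces $j_x = (t-n-2)/2$ and $i_x = -(t+n+2)/2$, and the formula $h^0(\mathbb{P}^1, \mathcal{O}(i)\oplus\mathcal{O}(j)) = \max(0,i+1) + \max(0,j+1)$ makes both sides explicit. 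Suppose, for contradiction, that some closed point had $t < n$; then $b := (n-t)/2$ is a positive integer, and I would test semicontinuity with the twist $m = b$. A direct check shows $\mathcal{E}|_{\mathbb{P}^1_x}(b) \simeq \mathcal{O}(-t-1)\oplus\mathcal{O}(-1)$, whence $h^0(\mathbb{P}^1_x, \mathcal{E}|_{\mathbb{P}^1_x}(b)) = 0$, whereas $\mathcal{E}_\eta(b) \simeq \mathcal{O}(b-1)\oplus\mathcal{O}(b-n-1)$ gives $h^0(\mathbb{P}^1_\eta, \mathcal{E}_\eta(b)) = b > 0$, contradicting semicontinuity. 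Hence $\type_x \ge \type_\eta$ for all $x$.

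Finally, part (2) follows by specializing the same computation to $m = 0$: with $\mathcal{E}$ normalized and $t := \type_x \ge n$ (by the non-negativity just established), one has $i_x + 1 = -(t+n)/2 \le 0$ and $j_x + 1 = (t-n)/2 \ge 0$, so $h^0(\mathbb{P}^1_x, \mathcal{E}|_{\mathbb{P}^1_x}) = \max(0,i_x+1) + \max(0,j_x+1) = \tfrac{1}{2}(\type_x - \type_\eta)$, which is exactly the asserted identity. I expect the main obstacle to be isolating the correct twist $m = (n - \type_x)/2$ in the semicontinuity step and checking the vanishing $h^0(\mathbb{P}^1_x, \mathcal{E}|_{\mathbb{P}^1_x}(b)) = 0$; the remaining steps are routine bookkeeping with line-bundle cohomology on $\mathbb{P}^1$. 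A twist-free alternative I would keep in reserve is to cite the general upper semicontinuity of the splitting type in families of bundles on $\mathbb{P}^1$, but the concrete rank-$2$ computation above is self-contained and avoids that appeal.
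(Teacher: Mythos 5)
Your proof is correct and follows essentially the same route as the paper's: parity from constancy of the fibrewise degree in the flat family, non-negativity from upper semicontinuity of $h^0$, and part (2) from the explicit formula for $h^0$ of a split rank-$2$ bundle on $\mathbb{P}^1$. The only point where you go beyond the paper is in making explicit the twist $m=(n-\type_x(\sE))/2$ at which the semicontinuity argument actually bites --- a detail worth having, since the paper's bare appeal to semicontinuity of $x \mapsto h^0(X_x,\sE_x)$ would be vacuous for the normalized bundle itself, whose $h^0$ vanishes at $\eta$.
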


\begin{proof}
For (1), note that the function $x \mapsto \deg \sE_x$ is constant, which implies that $\type_x (\sE) - \type_{\eta}(\sE)$ is even. For the non-negativity of this quantity, apply the semi-continuity of $x \mapsto h^0(X_x, \sE_x)$.

For (2), we may assume that $\mathcal{E}$ is normalized with $\mathcal{E}_{\eta} \simeq \mathcal{O}_{\mathbb{P}^1_{\eta}}(-1) \oplus \mathcal{O}_{\mathbb{P}^1_{\eta}}(-n-1)$. Then, the result is just a rephrasing of $h^0\left(\mathbb{P}^1_x,\sO_{\bP^1_x}(-1+j) \bigoplus \sO_{\bP^1_x}(-n-1-j)\right) = j$ for integer $j \geq 0$.
\end{proof}



In \autoref{ex:Hirzebruch_all_possible} we show that all possibilities that are left open by \autoref{prop:types_of_Hirzebruch_surfaces} for the type of $\sE$ at different primes   do appear.

\begin{proposition}
\label{prop:generic_injection_elementary_transformation}
Using \autoref{notation:space_elementary_transformation}, if $\sF \hookrightarrow \sE$ is an injection such that every irreducible components of $\Supp \left( \factor{\sE}{\sF} \right)$ is regular, then there is an intermediate vector bundle $\sF \subseteq \sG \subseteq \sE$ such that $\sF \subseteq \sG$ is an elementary transformation.
\end{proposition}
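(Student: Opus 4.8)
The plan is to peel off a single elementary transformation along one irreducible component of the support of the torsion quotient $Q \coloneqq \faktor{\sE}{\sF}$, by extracting a line subbundle from the ``socle'' of $Q$ along that component and checking that the resulting extension of $\sF$ remains locally free. The first step is homological bookkeeping. Since $0 \to \sF \to \sE \to Q \to 0$ is a two-term resolution by locally free sheaves, $Q$ has projective dimension $\le 1$ at every point; as $X = \bP^1_R$ is regular of dimension $2$, the Auslander--Buchsbaum formula gives $\depth Q_p \ge 1$ at every point $p$ of $\Supp Q$, so $Q$ has no embedded points and is pure of dimension $1$. Its associated points are exactly the generic points of the components $C_1,\dots,C_m$ of $\Supp Q$, each regular by hypothesis. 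Fix $C \coloneqq C_1$ with inclusion $\iota\colon C \hookrightarrow X$; since $X$ is regular (hence locally factorial) and $C$ has codimension $1$, the ideal $\mathcal I_C$ is invertible and $C$ is a regular Cartier divisor.

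Next I would study $\sN \coloneqq \sHom_{\mathcal O_X}(\mathcal O_C, Q) = Q[\mathcal I_C] \subseteq Q$, the subsheaf of sections annihilated by $\mathcal I_C$. It is an $\mathcal O_C$-module, and it is nonzero because the generic point of $C$ lies in $\Ass Q$. Purity forces $\sN$ to be torsion-free over $\mathcal O_C$: an $\mathcal O_C$-torsion section of $\sN$ is killed both by $\mathcal I_C$ and by a nonzerodivisor of $\mathcal O_C$, hence is supported on a finite set and generates a finite-length subsheaf of $Q$, which must vanish. As $C$ is a regular integral curve, $\sN$ is therefore locally free. The key observation is that the connecting map obtained by applying $\sHom_{\mathcal O_X}(\mathcal O_C,-)$ to $0 \to \sF \to \sE \to Q \to 0$,
$$
\delta\colon \sN = \sHom_{\mathcal O_X}(\mathcal O_C, Q) \longrightarrow \sExt^1_{\mathcal O_X}(\mathcal O_C, \sF),
$$
is a subbundle inclusion on $C$. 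Indeed $\sHom_{\mathcal O_X}(\mathcal O_C,\sF)$ and $\sHom_{\mathcal O_X}(\mathcal O_C,\sE)$ vanish because $\sF,\sE$ are torsion-free, so $\delta$ is injective with cokernel isomorphic to the image of $\sExt^1_{\mathcal O_X}(\mathcal O_C,\sF) \to \sExt^1_{\mathcal O_X}(\mathcal O_C,\sE)$. The latter is a subsheaf of the locally free $\mathcal O_C$-module $\sExt^1_{\mathcal O_X}(\mathcal O_C,\sE) \cong \sE|_C \otimes \mathcal N_{C/X}$, hence torsion-free and, on the regular curve $C$, locally free. Thus $\delta$ has locally free cokernel, i.e.\ it is fiberwise injective.

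With this in hand I would choose any line subbundle $\sL \hookrightarrow \sN$ (one exists: take the saturation of a nonzero rank-one subsheaf of the locally free $\sN$ on the regular curve $C$), set $\sK \coloneqq \iota_*\sL \subseteq Q$, and let $\sG \subseteq \sE$ be its preimage. By construction $\sF \subseteq \sG \subseteq \sE$ and $0 \to \sF \to \sG \to \iota_*\sL \to 0$ has exactly the shape of \autoref{def:elementary_transformation}, with center $C$ and line bundle $\sL$. It remains to show that $\sG$ is locally free, which is the heart of the argument. Away from $C$ this is clear, since there $\sG_p = \sF_p$. At a point $p \in C$ one has $(\iota_*\sL)_p \cong \mathcal O_{X,p}/(t)$ for a local equation $t$ of $C$, and the extension $0 \to \sF_p \to \sG_p \to \mathcal O_{X,p}/(t) \to 0$ has class in $\Ext^1_{\mathcal O_{X,p}}(\mathcal O_{X,p}/(t),\sF_p) = \faktor{\sF_p}{t\sF_p}$; solving the single relation $tg = \phi$ shows that $\sG_p$ is free precisely when this class is unimodular (otherwise $\sG_p \otimes \kappa(p)$ has dimension $\rk \sE + 1$). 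But this class is the value at $p$ of the composite $\sL \hookrightarrow \sN \xrightarrow{\delta} \sExt^1_{\mathcal O_X}(\mathcal O_C,\sF)$, which is fiberwise injective because $\sL$ is a subbundle of $\sN$ and $\delta$ is a subbundle inclusion; hence the class is unimodular and $\sG_p$ is free.

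The main obstacle is exactly this local freeness: a careless choice of rank-one subsheaf of $Q$ could make the extension degenerate, producing torsion in $\sG$, precisely at the finitely many points where $\sF \to \sE$ is most degenerate or where $C$ meets the other components. The device that removes this difficulty is the identification of the extension class of $\sG$ with the connecting map $\delta$, combined with the purity of $Q$, which together force $\delta$ to be a genuine subbundle inclusion and thereby keep the extension non-degenerate along all of $C$ at once.
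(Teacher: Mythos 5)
Your proposal is correct, and it reaches the crucial local freeness of $\sG$ by a genuinely different mechanism than the paper. Both arguments begin the same way: establish that $Q = \sE/\sF$ is pure of dimension one (you via projective dimension $\leq 1$ plus Auslander--Buchsbaum, the paper via the equivalent observation that a quotient of two $S_2$ sheaves is $S_1$), isolate an $\sO_C$-submodule of $Q$ along one regular component $C$ (your $\sHom(\sO_C,Q)$ versus the paper's twisted-down piece $\sQ_{\pre} = \sO_X\big(-(n_1-1)C_1\big)\cdot\prod_{i\geq 2}\sO_X(-n_iC_i)\cdot Q$, which sits inside your $\sHom(\sO_C,Q)$), and pass to a rank-one subsheaf. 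They then diverge. The paper takes an \emph{arbitrary} rank-one coherent subsheaf $\sQ \subseteq \sQ_{\pre}$, pulls it back to $\sE$, and repairs local freeness after the fact: it reflexivizes the preimage and invokes Hartshorne's theorem that reflexive sheaves on a regular two-dimensional scheme are locally free; the quotient $\sG/\sF$ is then only identified a posteriori (it may be strictly larger than the chosen $\sQ$) as a line bundle on $C$, using purity of $\sG/\sF$ and regularity of $C$. You instead choose the rank-one piece carefully --- a saturated line subbundle $\sL \subseteq \sHom(\sO_C,Q)$ --- and verify local freeness of the preimage directly, by identifying the local extension class of $0 \to \sF \to \sG \to \iota_*\sL \to 0$ with the value of the connecting map $\delta \colon \sHom(\sO_C,Q) \to \sExt^1(\sO_C,\sF)$ and showing $\delta$ is fiberwise injective, since its cokernel embeds in the locally free sheaf $\sE|_C \otimes \sO_X(C)|_C$ and hence is itself locally free on the regular curve $C$. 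The trade-off: the paper's route is shorter and more forgiving, since reflexivization silently mops up whatever goes wrong at the finitely many codimension-two points, at the cost of citing the theory of reflexive sheaves and losing control of the quotient; your route is more constructive, producing $\sG$ with quotient \emph{exactly} the chosen $\iota_*\sL$, characterizing precisely which rank-one subsheaves of $Q$ yield locally free preimages (those whose class under $\delta$ is fiberwise nonvanishing), and replacing the reflexive-sheaf machinery with an elementary minimal-generator count over the local ring ($\sG_p \subseteq \sE_p$ is torsion-free of rank $r$ and generated by $r$ elements, hence free). All the supporting steps in your write-up check out, including the vanishing $\sHom(\sO_C,\sF) = \sHom(\sO_C,\sE) = 0$, the purity argument killing $\sO_C$-torsion in $\sHom(\sO_C,Q)$, and the identification of the local class in $\sF_p/t\sF_p$ with $\delta(\ell)$ via pullback of extensions.
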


\begin{proof}
Set $\sH:=\factor{\sE}{\sF}$ and let $\pi: \mathcal{E} \to \mathcal{H}$ be the projection map. As both $\sF$ and $\sE$ are $S_2$, $\sH$ is $S_1$. Hence $\Supp \sH$ is pure of codimension $1$. For $i=1, \dots, s$, let $C_1,\dots,C_s$ be the irreducible components of $\Supp \sH$, and let $n_i$ be the smallest integer such that $\sO_X(-n_iC_i) \cdot \sH$ is zero at the generic point of $C_i$. Consider then 
$$
\sQ_{\pre}:=  \sO_X\big(-(n_1-1)C_1 \big) \cdot \prod_{i=2}^n \sO_X(-n_i C_i) \cdot \sF \subseteq \sH.
$$ Note that $\sQ_{\pre}$ is a coherent sheaf on $C_1$. Let $\sQ$ be a rank $1$ coherent subsheaf of $\sQ_{\pre}$ on $C_1$. Let $\sG$ be the reflexivization of $\pi^{-1} ( \sQ)$. Note that as $\pi^{-1} ( \sQ)$ is a subsheaf of $\sE$, it is torsion-free. Hence, the natural homomorphism $\pi^{-1} ( \sQ) \to \sG$ is an isomorphism in codimension $1$. Therefore, $\sG \to \sE$ is injective in codimension $1$ and then, as sections of reflexive sheaves are determined in codimension $1$, it is injective globally. This justifies that we can think about $\sG$ as a subsheaf of $\sE$. Additionally, as $X$ is regular of dimension $2$, $\sG$ is locally free \cite[Cor 1.4]{Hartshorne_Stable_reflexive_sheaves}. Finally, the quotient $\factor{\sG}{\sF}$ is $S_1$, by arguing as above. Hence, combining this with the choice of $\sQ$, $\factor{\sG}{\sF}$ is a torsion-free sheaf of rank $1$ on $C_1$. Now, using that $C_1$ is regular, by the classification of finitely generated modules over a PID, we obtain that  $\factor{\sG}{\sF}$ is a line bundle on $C_1$.
\end{proof}

\begin{proposition} \label{prop:dec_elem_transf}
Using the \autoref{notation:space_elementary_transformation}, every two vector bundles of rank $4$ on $X$ can be connected by a sequence of elementary transformations.
\end{proposition}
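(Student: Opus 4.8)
The plan is to exhibit ``being connected by a sequence of elementary transformations'' as an equivalence relation on rank-$r$ bundles on $X=\bP^1_R$ (we are free to traverse each short exact sequence in either direction), and then to connect every bundle to a split bundle and, finally, to connect any two split bundles. The engine is the following consequence of \autoref{prop:generic_injection_elementary_transformation}, which I would record first: \emph{if $\sF\subseteq\sE$ are bundles of the same rank on $X$ and every irreducible component of $\Supp\!\left(\factor{\sE}{\sF}\right)$ is regular, then $\sF$ and $\sE$ are connected by a finite chain of elementary transformations.} Indeed, \autoref{prop:generic_injection_elementary_transformation} produces an intermediate $\sG$ with $\sF\subseteq\sG$ an elementary transformation and $\sG\subseteq\sE$ an inclusion whose quotient has the same (regular) support components, so the only thing to verify is termination. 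For this I would induct on the total length of $\factor{\sE}{\sF}$ at the generic points of its support components (equivalently, on the sum of the integers $n_i$ from the proof of \autoref{prop:generic_injection_elementary_transformation}): passing from $\sF$ to $\sG$ strictly decreases this finite quantity, and it vanishes precisely when $\sF=\sE$.

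Next I would connect an arbitrary rank-$r$ bundle $\sE$ to a split bundle. Over the generic point, Grothendieck's splitting theorem gives $\sE_K\simeq\bigoplus_{i=1}^r \sO_{\bP^1_K}(a_i)$. Writing $\sA\coloneqq\bigoplus_{i=1}^r\sO_X(a_i)$, the isomorphism $\sA_K\xrightarrow{\ \sim\ }\sE_K$ is an element of $\bigoplus_i H^0(\bP^1_K,\sE_K(-a_i))$, and since $H^0(X,\sE(-a_i))\otimes_R K = H^0(\bP^1_K,\sE_K(-a_i))$ by flat base change, multiplying by a common nonzero $c\in R$ clears denominators and extends it to a morphism $\sA\to\sE$ over $X$ that is still an isomorphism on the generic fibre. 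This morphism is injective because $\sA$ is torsion free, and its cokernel, being trivial over $\eta$, is supported on finitely many closed fibres $\bP^1_x$. As each such fibre is regular, the recorded reduction applies and connects $\sE$ to the split bundle $\sA$.

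It then remains to connect two split bundles $\sA=\bigoplus_i\sO_X(a_i)$ and $\sB=\bigoplus_i\sO_X(b_i)$ (after reordering the summands). Set $c_i\coloneqq\min(a_i,b_i)$ and fix a fibre coordinate $x_0$, so that the section $x_0$ cuts out the horizontal section $\Sigma_0\coloneqq(x_0=0)\cong\Spec R$, which is regular. Multiplication by $x_0^{a_i-c_i}$ defines an injection $\bigoplus_i\sO_X(c_i)\hookrightarrow\sA$ whose cokernel is supported on $\Sigma_0$, and likewise $\bigoplus_i\sO_X(c_i)\hookrightarrow\sB$ has cokernel supported on $\Sigma_0$. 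Since $\Sigma_0$ is regular, the recorded reduction connects $\bigoplus_i\sO_X(c_i)$ to both $\sA$ and $\sB$. Chaining the three steps $\sE\leftrightarrow\sA\leftrightarrow\bigoplus_i\sO_X(c_i)\leftrightarrow\sB\leftrightarrow\sF$ finishes the proof; note that nothing in the argument uses the rank, so it applies verbatim to rank $4$ (and, in fact, to any rank, in particular the rank $2$ Hirzebruch case relevant to \autoref{prop:Hirzebruch_surface_intro}).

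The main obstacle, and the reason for the two specific constructions above, is controlling the regularity of the supports of the quotient sheaves, since \autoref{prop:generic_injection_elementary_transformation} applies only when every component of that support is regular. A naive attempt to inject a fixed bundle into $\sE$ via several general global sections would produce a cokernel supported on a degeneracy divisor whose regularity one cannot guarantee over a Dedekind base, where Bertini-type theorems are delicate because of finite residue fields. I would sidestep this entirely by engineering the cokernels to lie either on whole closed fibres $\bP^1_x$ (in the reduction to split bundles) or on the coordinate section $\Sigma_0$ (in the comparison of split bundles), both regular for tautological reasons. The only remaining technical point is the termination of the iteration of \autoref{prop:generic_injection_elementary_transformation}, which the length bookkeeping at the generic points of the support components handles.
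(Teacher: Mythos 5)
Your proof is correct, and it rests on the same key lemma as the paper's --- \autoref{prop:generic_injection_elementary_transformation} plus a termination argument --- but the intermediate decomposition is genuinely different. The paper first twists $\sE$ so that its generic splitting degrees $j_i$ are non-negative, then chooses a global section of each generic summand vanishing at distinct $K$-rational points of $\bP^1_K$ (using that $K$ is infinite) and clears denominators; this produces a generically isomorphic inclusion $\sO_X^{\oplus r}\hookrightarrow \sE$ whose quotient is supported on the closures of those rational points (sections of $\bP^1_R \to \Spec R$, hence regular) together with some fibres, and it concludes by Noetherian induction, connecting everything to the trivial bundle in a single pass. You instead extend the full Grothendieck splitting isomorphism over $K$ after clearing denominators, which makes the cokernel purely \emph{vertical} (whole fibres $\bP^1_x$), and you then compare two split bundles along the coordinate section $\Sigma_0=(x_0=0)$ via multiplication by powers of $x_0$, with cokernel supported on the regular section $\Sigma_0$. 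Your route buys three things: you never need to twist, whereas the paper leaves implicit that $\sE$ and its twist are themselves connected by elementary transformations (an easy but unstated step, e.g.\ via $\sE(-C)\subseteq\sE$ for $C$ a fibre or section); you avoid the choice of sections vanishing at distinct rational points and hence any appeal to the infinitude of $k(\eta)$; and you make the termination of the iteration of \autoref{prop:generic_injection_elementary_transformation} explicit through the length bookkeeping at generic points of the support (noting that the length drops by exactly one at each step since $\factor{\sG}{\sF}$ is a line bundle on $C_1$, and that purity from the $S_1$ property forces the quotient to vanish once the generic lengths do), where the paper only says ``Noetherian induction''. The paper's route is shorter in that it reaches $\sO_X^{\oplus r}$ directly, at the cost of handling the more delicate horizontal supports; both arguments are rank-independent, as you observe.
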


\begin{proof}
We know that $\sE_\eta \cong \bigoplus_{i=1}^4 \sO_{\bP^1_{\eta}}(j_i)$. By twisting $\sE$ we may assume that $j_i\geq 0$ for all $i$. As $k(\eta)$ is infinite, we may choose global section $s_{i, \eta}$ of the summand $\sO_{\bP^1_{\eta}}(j_i)$ such that $\factor{\sO_{\bP^1_{\eta}}(j_i)}{\sO_{\bP^1_{\eta}}\cdot s_i}$ is a direct sum of skyscraper sheaves supported at distinct $k(\eta)$-rational points (and these points are even distinct for separate values of $i$). As we are working over an affine base $R$, after rescaling by elements of $R$, the above sections $s_{i, \eta}$ extend to global section $s_i \in H^0(X, \sE)$. This gives a generically isomorphic embedding $\sO_X^{\oplus r} \hookrightarrow \sE$ such that $\factor{\sE}{\sO_X^{\oplus r}}$ is supported on the union of some sections (not necessarily disjoint) and fibers (possibly with multiplicity) of $\bP^1_R \to \Spec R$. Hence, \autoref{prop:generic_injection_elementary_transformation} applies, and by Noetherian induction it shows that $\sE$ can be connected via elementary transformations to $\sO_X^{\oplus r}$.
\end{proof}

\begin{definition}
\label{def:birat_map_from_generically_isom_injection}
In the situation of \autoref{notation:space_elementary_transformation}, assume that there is an injection $\iota : \sF \hookrightarrow \sE$. Let $U$ be any open set over which $\iota$ is an isomorphism. Then, we define $\phi_\iota : \bP_X(\sE) \dashrightarrow \bP_X(\sF)$ be the rational map for which $\phi_\iota|_{\pi^{-1}U}$ is the morphism induced by $\iota|_{\pi^{-1}U}$. Note this is well-defined, that is, it is independent of $U$. 
\end{definition}

Note that if $S$ is an $\bN$-graded ring, then for every invertible $f \in S_0$, we obtain a ring automorphism $\xi_f : S \to S$ that sends $g \in S_i$ to $gf^i$. Note that $\xi_f$ induces on $\Proj S$ the identity morphism. In particular, using \autoref{notation:space_elementary_transformation}, if $\sL$ is a line bundle on $X$, then there is a canonical isomorphism $\xi_{\sL} :    \bP(\sE) \to \bP(\sE \otimes \sL)$. 

\begin{lemma}
\label{lem:projectivization_and_twist}
In the situation of \autoref{notation:space_elementary_transformation}, Let $C \subseteq X$ be a curve, and let $\iota: \sE(-C) \to \sE$ the natural injection. Then $\phi_\iota=\xi_{\sO_X(-C)}$ as rational maps. 
\end{lemma}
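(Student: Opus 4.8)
The plan is to verify that the two maps agree on a dense open subset and then conclude by separatedness. Both $\phi_\iota$ and $\xi_{\sO_X(-C)}$ are (rational) maps from $\bP_X(\sE)$ to $\bP_X(\sE\otimes\sO_X(-C))=\bP_X(\sE(-C))$, and since $X=\bP^1_R$ is regular the curve $C$ is a Cartier divisor, so $\sO_X(-C)=\sI_C$ is a genuine line bundle and the whole statement makes sense. Set $U:=X\setminus C$; this is dense open in $X$, and as $\pi$ is a projective bundle over the integral scheme $X$, the preimage $\pi^{-1}(U)$ is dense open in the irreducible scheme $\bP_X(\sE)$. The natural injection $\iota$ is an isomorphism precisely over $U$ (its cokernel is $\sE|_C$, supported on $C$), so by \autoref{def:birat_map_from_generically_isom_injection} the rational map $\phi_\iota$ is represented by the morphism on $\pi^{-1}(U)$ induced by $\iota|_U$. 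Since $\bP_X(\sE(-C))$ is separated, it then suffices to prove that $\xi_{\sO_X(-C)}$ restricts to the very same morphism over $\pi^{-1}(U)$.

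Next I would make $\iota|_U$ explicit. Over $U$ the ideal sheaf $\sO_X(-C)=\sI_C$ equals $\sO_U$ as a subsheaf of $\sO_X|_U=\sO_U$, with canonical generator the unit section $1$. Under this identification the injection $\iota=\id_{\sE}\otimes(\sI_C\hookrightarrow\sO_X)$ restricts over $U$ to the canonical identification $\sE(-C)|_U=\sE|_U\otimes\sI_C|_U\xrightarrow{\ \sim\ }\sE|_U$, $e\otimes 1\mapsto e$. In other words, $\iota|_U$ is exactly the isomorphism coming from the canonical trivialization of $\sO_X(-C)$ over $U$ determined by $1$.

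Finally I would unwind the definition of $\xi_{\sO_X(-C)}$. Recall that $\xi_{\sL}$ is built by choosing, over each open $V$ on which $\sL$ is trivial, a generator $t$ of $\sL|_V$; the resulting isomorphism $\sE|_V\xrightarrow{\ \sim\ }(\sE\otimes\sL)|_V$, $e\mapsto e\otimes t$, induces an isomorphism of $\bP_X(\sE)|_V$ with $\bP_X(\sE\otimes\sL)|_V$, and these glue because replacing $t$ by $ut$ rescales the degree-$i$ graded piece by $u^i$, i.e.\ by the automorphism $\xi_u$ which is the identity on $\Proj$. Applying this with $\sL=\sO_X(-C)$ and the open set $V=U$, the canonical generator $t=1$ produces precisely the trivialization isomorphism whose inverse is the map $\iota|_U$ from the previous step. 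Hence $\xi_{\sO_X(-C)}$ and $\phi_\iota$ are induced by one and the same isomorphism of bundles over $\pi^{-1}(U)$, so they coincide there, and therefore as rational maps.

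The only genuinely delicate point is the bookkeeping of conventions: one must check that the Grothendieck $\Proj$ convention reverses the direction of a bundle map consistently in both constructions, so that the single isomorphism $\iota|_U$ induces maps in the same direction $\bP_X(\sE)\to\bP_X(\sE(-C))$ in each case. Everything else—the density of $\pi^{-1}(U)$, the separatedness of the target, and the Cartier property of $C$—is routine once the regularity of $X$ has been used.
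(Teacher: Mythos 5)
Your proof is correct and takes essentially the same approach as the paper's: the paper likewise restricts to $U = X \setminus C$, noting that rational maps are determined on a non-empty open set, and then declares the identification of $\phi_\iota|_{\pi^{-1}U}$ with $\xi_{\sO_X(-C)}$ ``immediate from the definitions.'' Your explicit unwinding of the unit-section trivialization of $\sO_X(-C)|_U$ and the check on $\Proj$-direction conventions simply spells out what the paper leaves to the reader.
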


\begin{proof}
As rational maps are determined on non-empty open set, we may work over $U= X \setminus C$. Then, one just has to verify that over $\pi^{-1}U$, we have $\xi_{\sO_X(-C)}^{-1} \circ \phi_\iota= \id_{\sE(-C)}$, which is immediate from the definitions.
\end{proof}

\begin{lemma}
\label{lem:blow_up_diagonal}
Let $Y= \bP^1_{A,x,y}$ for some Noetherian ring $A$, and let $\alpha : \sO_Y u \oplus \sO_Y v \to \sO_Y(1)$ be the tautological evaluation homomorphism given by $\alpha(u)=x$ and $\alpha(v)=y$. Consider $Y= \bP_Y\big(\sO_Y(1)\big)$ as a closed subscheme of $\bP_Y(\sO_Y \oplus \sO_Y)= Y \times_A Y$ via the homomorphism $\alpha$. Then, $Y$ is the diagonal subscheme of $Y \times_A Y$.
\end{lemma}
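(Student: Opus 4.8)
The plan is to identify two sections of the first projection $\pr_1\colon Y\times_A Y\to Y$ by computing their compositions with \emph{both} projections and then invoking the universal property of the fibre product. To keep the several roles played by $Y$ apart, I would write $P\coloneqq \bP^1_A=\Proj A[x,y]$ for the base and recall that, since $\sO_P(1)$ is a line bundle, its projectivisation $\bP_P(\sO_P(1))$ is canonically $P$ itself, with tautological quotient the identity $\sO_P(1)\to\sO_P(1)$. Under the convention $\bP_P(-)=\Proj_P\Sym^\bullet(-)$ used throughout, a surjection of vector bundles induces a closed immersion of the associated bundles compatibly with the tautological quotients; thus the surjection $\alpha\colon \sO_P^{\oplus 2}\twoheadrightarrow\sO_P(1)$, $u\mapsto x$, $v\mapsto y$, yields the closed immersion $s\colon P=\bP_P(\sO_P(1))\hookrightarrow\bP_P(\sO_P^{\oplus 2})$ of the statement, which is automatically a section of the structure morphism.

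First I would record the identification $\bP_P(\sO_P\oplus\sO_P)=\Proj_P\sO_P[u,v]\cong P\times_A P$, under which the first projection is the bundle structure morphism and the second projection $\pr_2$ onto $\bP^1_A=\Proj A[u,v]$ is the one induced by $u,v$. On the functor of points this reads: a $T$-point over $t\colon T\to P$ is a line-bundle quotient $q\colon t^*\sO_P^{\oplus 2}=\sO_T^{\oplus 2}\twoheadrightarrow\sM$, with $\pr_1$ recording $t$ and $\pr_2$ recording the $T$-point $[q]$ of $\bP^1_A$.

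Next I would compute the two projections of $s$. Tautologically $\pr_1\circ s=\id_P$, as $s$ is a section of the structure morphism. For the second, a $T$-point of $P=\bP_P(\sO_P(1))$ over $t$ is just $t$ (the universal quotient being the identity on $t^*\sO_P(1)$), and the induced $T$-point of $\bP_P(\sO_P^{\oplus 2})$ is the composite quotient $\sO_T^{\oplus 2}\xrightarrow{t^*\alpha}t^*\sO_P(1)$, that is, the quotient sending $u\mapsto t^*x$ and $v\mapsto t^*y$. The key point is that the morphism $P\to\bP^1_A$ classified by $\alpha$ — equivalently, by the two generating global sections $x,y$ of $\sO_P(1)$ — is precisely the identity of $P=\bP^1_A$; this is the standard description of $\id_{\bP^1_A}$ by its own homogeneous coordinates. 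Pulling back along $t$ gives $\pr_2\circ s\circ t=t$ for every $t$, hence $\pr_2\circ s=\id_P$.

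Having shown $\pr_1\circ s=\pr_2\circ s=\id_P$, the universal property of $P\times_A P$ forces $s=\Delta$, which is the claim. I do not expect a genuine difficulty: the whole content is careful bookkeeping of the $\Proj\Sym$ convention together with the identification of the morphism defined by the sections $x,y$ with $\id_{\bP^1_A}$, and the only thing to watch is the quotient-versus-sub convention for $\bP(-)$, which fixes the direction of the induced closed immersion. As a sanity check (or an alternative argument) one can work in the charts $\{x\neq 0\}$ and $\{y\neq 0\}$ of the base against $\{u\neq 0\}$, $\{v\neq 0\}$ of the fibre, where $s$ is cut out by $u/v=x/y$, exhibiting the diagonal directly; but the functor-of-points computation is cleaner and avoids gluing across charts.
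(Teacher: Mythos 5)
Your argument is correct, but it runs along a genuinely different track from the paper's. The paper's proof is a one-line graded-algebra computation: regarding $\alpha$ as a map of graded $A[x,y]$-modules, its kernel is the syzygy module of the regular sequence $x,y$, hence generated by $yu-xv$; since the kernel of $\Sym\alpha$ is generated in degree one, the image of $\bP_Y(\sO_Y(1))$ in $\bP^1_{A,x,y}\times_A\bP^1_{A,u,v}$ is cut out by the single bihomogeneous equation $yu-xv=0$, which one recognizes as the diagonal. You instead avoid equations entirely: you observe that the closed immersion $s$ is a section of $\pr_1$, compute via the functor of points that $\pr_2\circ s$ is the morphism classified by the quotient $(x,y)\colon\sO_P^{\oplus 2}\twoheadrightarrow\sO_P(1)$ --- which is $\id_{\bP^1_A}$ by the standard description of a morphism to projective space by generating sections --- and conclude $s=\Delta$ from the universal property of the fibre product. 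Each approach has a virtue: the paper's gives the explicit defining equation with essentially no formalism (at the cost of tacitly using that $\ker(\Sym\alpha)$ is generated by $\ker\alpha$ in degree one), while yours is coordinate-free, makes the identification of the second projection transparent, and sidesteps that degree-one generation point; your chart computation $u/v=x/y$ at the end is in effect the paper's proof in disguise. Both establish equality of closed subschemes correctly, since $s$ and $\Delta$ are closed immersions agreeing as morphisms from $P$.
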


\begin{proof}
This is a local computation. The key is that if we regard $\alpha$ as a homomorphism of graded modules over $A[x,y]$, then $\ker \alpha= (yu-xv)$. Hence, the equation of $Y$ in  $Y \times_A Y = \bP^1_{A,x,y} \times_A \bP^1_{A,u,v}$ is $(yu-xv)$, which is the defining equation of the diagonal. 
\end{proof}

\begin{proposition}
\label{prop:elementary_transf_to_blow_up}
Using the \autoref{notation:space_elementary_transformation}, let 
\begin{equation*}
\xymatrix{
0 \ar[r] & \sF \ar[r] & \sE \ar[r] & \iota_* \sL \ar[r] & 0 
}
\end{equation*}
be an elementary transformation on $X$ of rank $2$ vector bundles. Let $V$ be the section of $\bP_X(\sE)|_C \to C$  corresponding to the surjection $\sE|_C \twoheadrightarrow \sL$. We think about $V$ as a closed subscheme of $\bP(\sE)$. 
Then, there is a section $U$ of $Z:=\bP_X(\sF)$ over $C$ such that $\Bl_V Y \simeq \Bl_U Z $.
\end{proposition}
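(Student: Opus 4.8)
\emph{Proof strategy.} The plan is to first pin down the section $U$ intrinsically, then to construct a morphism $\Bl_V Y \to Z$ resolving the birational map $\phi_\iota$ of \autoref{def:birat_map_from_generically_isom_injection}, and finally to recognise this morphism as the blow-up along $U$, the last verification being étale-local over $\Spec R$ near $C$.

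First I would identify $U$. Write $\sM \coloneqq \ker(\sE|_C \twoheadrightarrow \sL)$ for the line bundle on $C$ whose associated quotient defines the section $V$. Restricting the elementary transformation to $C$ and using that $C$ is an effective Cartier divisor on the regular surface $X$ (so $\sO_C$ has the Koszul resolution $0 \to \sO_X(-C) \to \sO_X \to \sO_C \to 0$, and the multiplication map $\iota_*\sL(-C) \to \iota_*\sL$ vanishes), the local Tor sequence reads
\[
0 \to \sL \otimes \sN^\vee \to \sF|_C \xrightarrow{\ \alpha|_C\ } \sE|_C \to \sL \to 0, \qquad \sN \coloneqq \sN_{C/X},
\]
where $\alpha\colon \sF\hookrightarrow\sE$ is the given injection. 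Since $\im(\alpha|_C)=\sM$, this breaks off into $0 \to \sL\otimes\sN^\vee \to \sF|_C \to \sM \to 0$, and I define $U$ to be the section of $\tau$ over $C$ corresponding to the quotient $\sF|_C \twoheadrightarrow \sM$. This is pleasingly symmetric to $V$ (quotient $\sL$, kernel $\sM$) and reflects the existence of a dual elementary transformation. Like $V\cong C\hookrightarrow Y$, the subscheme $U\cong C\hookrightarrow Z$ is a section of a smooth $\bP^1$-bundle, hence regularly embedded of codimension $2$, so both blow-ups are well behaved.

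Next I would build the morphism. Let $b : \wt Y \coloneqq \Bl_V Y \to Y$ with exceptional divisor $E$, and consider the composite
\[
\theta : (\pi b)^* \sF \xrightarrow{(\pi b)^*\alpha} (\pi b)^* \sE \xrightarrow{b^*\mathrm{ev}} b^* \sO_Y(1),
\]
where $\mathrm{ev}\colon \pi^*\sE \twoheadrightarrow \sO_Y(1)$ is the tautological quotient. Away from $V$ the map $\phi_\iota$ is a morphism, so $\theta$ is surjective on $\wt Y \setminus E$. The key local claim is that $\theta$ vanishes to order exactly $1$ along $E$: after a harmless twist (\autoref{lem:projectivization_and_twist}), one trivialises $\sE=\sO_X e_1\oplus\sO_X e_2$ étale-locally near $C$ with $e_1|_C$ spanning $\sM$ and $e_2$ mapping to a generator of $\sL$, so that $\alpha=\mathrm{diag}(1,t)$ for a local equation $t$ of $C$ and $V=\{t=0\}\cap\{Z_1=0\}$ in tautological coordinates $[Z_1:Z_2]$. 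The two standard charts of $\wt Y$ then exhibit $\theta = t_E\cdot\theta'$ with $t_E$ a local equation of $E$ and $\theta'$ an everywhere-surjective quotient $(\pi b)^*\sF\twoheadrightarrow b^*\sO_Y(1)\otimes\sO_{\wt Y}(-E)$. By the universal property of $Z=\bP_X(\sF)=\Proj_X\Sym^\bullet\sF$, the quotient $\theta'$ defines a morphism $\psi:\wt Y\to Z$ over $X$ resolving $\phi_\iota$, with $\psi^*\sO_Z(1)\simeq b^*\sO_Y(1)\otimes\sO_{\wt Y}(-E)$.

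Finally I would check that $\psi$ is the blow-up along $U$. The same local model shows that the inverse image ideal $\psi^{-1}\sI_U\cdot\sO_{\wt Y}$ is invertible, equal to $\sO_{\wt Y}(-E')$, where $E'$ is the strict transform of $\pi^{-1}(C)$ (the divisor $\psi$ contracts onto $U$); hence $\psi$ factors through $\Bl_U Z$, and the chart computation identifies $\wt Y\to\Bl_U Z$ with the identity on the two standard affine charts, so it is an isomorphism. Equivalently and more symmetrically, running the identical construction with $\sF|_C\twoheadrightarrow\sM$ in place of $\sE|_C\twoheadrightarrow\sL$ produces $\Bl_U Z\to Y$, and the two constructions realise $\Bl_V Y$ and $\Bl_U Z$ as the same scheme over $Y\times_X Z$; \autoref{lem:blow_up_diagonal} supplies the diagonal model that makes the exceptional behaviour over $C$ transparent. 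I expect the main obstacle to be precisely this last step: reconciling the intrinsic definition of $U$ (via the Tor sequence, independent of trivialisation) with the order-of-vanishing computation along $E$, and then upgrading ``birational projective morphism that is an isomorphism in codimension one'' to a genuine blow-up. The cleanest route is to carry out the two-chart computation in the standard model $\alpha=\mathrm{diag}(1,t)$, in which $\Bl_V Y$ and $\Bl_U Z$ are visibly the single blow-up of the codimension-$2$ locus $\{t=0\}\cap\{Z_1=0\}$, the two $\bP^1$-bundle structures recording $Y$ and $Z$ respectively.
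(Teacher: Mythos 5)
Your proposal is correct, and your section $U$ coincides with the paper's: the paper defines $U$ via the dual elementary transformation $0 \to \sE(-C) \to \sF \to \factor{\sF}{\sE(-C)} \to 0$ (using that $\iota_*\sL$ is killed by $\sO_X(-C)$, so $\sE(-C) \subseteq \sF$), and since $\im(\alpha|_C) = \factor{\sF}{\sE(-C)} = \sM \subseteq \sE|_C$, your Tor-sequence description of the quotient $\sF|_C \twoheadrightarrow \sM$ is literally the same surjection. Where you genuinely diverge is in the architecture of the identification. The paper argues globally and symmetrically: from $\im\big(\pi^*\sF \to \sO_Y(1)\big) = \sO_Y(1)\otimes\sI_{V/Y}$ --- the same key fact underlying your order-of-vanishing claim along $E$ --- it embeds $\Bl_V Y = \Proj\big(\bigoplus_n \sO_Y(n)\otimes\sI_{V/Y}^n\big)$ into $\bP_Y(\pi^*\sF)\cong Y\times_X Z$, identifies it with the closure $W$ of the graph of $\phi_\alpha$ by comparing both with the diagonal over $X\setminus C$ (\autoref{lem:blow_up_diagonal}) and using integrality, then runs the identical argument for $\beta:\sE(-C)\hookrightarrow\sF$ on the $Z$-side, reconciling $\phi_\beta$ with $\phi_\alpha$ via \autoref{lem:projectivization_and_twist}; no coordinate computation appears outside the quoted lemmas. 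You instead proceed asymmetrically: you build the resolving morphism $\psi:\Bl_V Y\to Z$ by dividing the pulled-back evaluation map by the exceptional divisor and invoking the universal property of $\bP_X(\sF)$, and then certify that $\psi$ is the blow-up of $U$ in the (Zariski-suffices, in fact) local normal form $\alpha=\mathrm{diag}(1,t)$, where $\sF=\sO e_1\oplus\sO(-C)e_2$ and $V=\{t=Z_1=0\}$; this model is correct and your two-chart verification closes the argument. Your route buys explicit data the paper leaves implicit --- the formula $\psi^*\sO_Z(1)\simeq b^*\sO_Y(1)\otimes\sO_{\wt{Y}}(-E)$ and the identification of the $\psi$-contracted divisor with the strict transform of $\pi^{-1}(C)$, which is exactly the geometric picture used later in \autoref{ex:Hirzebruch_all_possible} --- at the cost of a chart computation for the final isomorphism; the paper's graph-closure argument buys coordinate-freeness and the pleasant feature that both blow-up identifications follow from the single diagonal lemma.
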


\begin{proof}
 Let $\alpha : \sF \hookrightarrow \sE$ be the injection given by the considered elementary transformation.  Let $W$ be the closure of the graph of $\phi_\alpha$ in $Y \times_X Z$.

In the rest of the proof our goal is to show that $W$ has the description as blow-ups of sections of $\pi|_{\pi^{-1}(C)}$ and of $\tau|_{\tau^{-1}(C)}$, respectively. We start with the first one. Consider the following composition:
\begin{equation*}
\xymatrix{
\pi^* \sF \ar@{^(->}[r] \ar@/^2pc/[rr]^{\xi} & \pi^* \sE \ar[r] & \sO_Y(1)
}
\end{equation*}
where $\pi^*\mathcal{E} \to \mathcal{O}_Y(1)$ is the natural homomorphism associated to $Y = \mathbb{P}(\mathcal{E})$.
We have $\im \xi =  \sO_Y(1) \otimes \sI_{V/Y} $.
Hence, if $\sI:=\sI_{V/Y}$, then 
\begin{equation*}
\Bl_V Y = \Proj \left( \bigoplus_n \sI^n \right) = \Proj \left( \bigoplus_n \sO_Y(n) \otimes \sI^n \right)
\end{equation*}
is a closed subscheme of 
\begin{equation*}
\bP_Y(\pi^* \sF) \cong Y \times_X Z
\end{equation*}
Note that over $X \setminus C$, $\xi$ is surjective and so on this open set we can identify $\sF$ and $\sE$. Then we can apply \autoref{lem:blow_up_diagonal}, which gives that via this identification $\Bl_V Y$ is the diagonal over $X \setminus C$. However, by \autoref{def:birat_map_from_generically_isom_injection}, the same holds for $W$. As $W$ and $\Bl_V Y$ are both integral closed subschemes of $Y \times_X Z$ and they agree on a non-empty open set, they have to be equal. This concludes the first description of $W$ as a blow-up of a section of $\pi|_{\pi^{-1}(C)}$

Now, we work on the second description of $W$ as a blow-up of a section of $\tau|_{\tau^{-1}(C)}$ on $Z$. First, we have to define the corresponding section of $\tau|_{\tau^-1(C)}$. Note that as $\sL$ is locally free on $C$, $\iota_* \sL$ is annihilated by $\sO_X(-C) \subseteq \sO_X$. Hence, $\sE(-C) \subseteq \sF$. Let us call $\beta$ this embedding. Additionally, as $\factor{\sE}{\sE(-C)}$ is a rank $2$ vector bundle on $C$, $\factor{\sF}{\sE(-C)}$ is a line bundle on $C$. Hence, we obtain an elementary transformation from $\sE(-C)$ to  $\sF$. Let $U$ be the section of $\tau|_{\tau^{-1}C}$ corresponding to this elementary transformation.   As in the previous case, using \autoref{lem:blow_up_diagonal},  we get that $\Bl_U Z$ is the closure of the graph of $\phi_\beta$ in $\bP(\sE(-C)) \times_X Y$. However, $W$ is defined as the closure of the graph of $\phi_\alpha$, not of $\phi_\beta$. Hence,
we are left to identify $\phi_\alpha$ with $\phi_\beta$ using the identification $\bP(\sE(-C)) \cong \bP(\sE) = Y$. By \autoref{lem:projectivization_and_twist}, this latter identification is $\phi_\gamma$, where $\gamma$ is the embedding $\gamma : \sE(-C) \hookrightarrow \sE$. Hence,  this identification indeed equates $\phi_\alpha$ and $\phi_\beta$, as we have $\gamma= \beta \circ \alpha$.
\end{proof}

\begin{example} 
\label{ex:Hirzebruch_all_possible}
Here we show that all types of Hirzebruch surfaces described by \autoref{prop:types_of_Hirzebruch_surfaces} exist. Fix a generic type $n$.
Find prime numbers $p_1,\dots, p_r$ and integers $n_i \geq 0$ for $i=1,\dots,r$. For each $i=1,\dots,r$, fix also surjections $\alpha_i : \sO_{\bP_{\bF_{p_i}}^1} (-1) \oplus \sO_{\bP_{\bF_{p_i}}^1} (-n -1) \twoheadrightarrow \sO_{\bP_{\bF_{p_i}}^1}(n_i-1)$. Compose then these surjections with the restriction homomorphisms
\begin{equation*}
\sF:=\sO_{\bP_{\bZ}^1} (-1) \oplus \sO_{\bP_{\bZ}^1}  (-n-1)\twoheadrightarrow \bigoplus_{i=1}^r \sO_{\bP_{\bF_{p_i}}^1} (-1) \oplus \sO_{\bP_{\bF_{p_i}}^1} (-n-1)
\end{equation*}
Call $\xi$ the obtained homomorphism. We claim that $\sE:=\ker \xi$ is a rank $2$ vector bundle on $\bP_{\bZ}^1$ such that 
\begin{equation*}
\sE|_{\bP_{\bF_p}^1} \cong
\left\{
\begin{matrix}
\sO_{\bP_{\bF_p}^1} (-1) \oplus \sO_{\bP_{\bF_p}^1}(-n-1) & \textrm{if }p \not\in \{p_1,\dots, p_r\} \\
\sO_{\bP_{\bF_p}^1}(n_i-1) \oplus \sO_{\bP_{\bF_p}^1}(-1-n-n_i)  & \textrm{if }p=p_i \\
\end{matrix}
\right.
\end{equation*}
As a consequence, $\bP(\sE|_{\mathbb{P}^1_{\mathbb{F}_p}})$ is a Hirzebruch surface of type $n$ at $p \not\in \{p_1,\dots,p_r\}$, and of type $n+2n_i$ at $p=p_i$.

To show our claim, by \autoref{prop:types_of_Hirzebruch_surfaces} it is enough to claim that for every $i=1,\dots,r$ we have
\begin{equation*}
h^0\left(\bP^1_{\bF_p}, \sE|_{\bP^1_{\bF_{p_i}}} \right) = n_i.
\end{equation*}\
Fix such an $i$, and write $p=p_i$ and $d=n_i$ and we work locally over $p$. 
Note that  $\sE|_{\bP^1_{\bF_p}} \cong \factor{\sE}{\sE\left(-\mathbb{P}^1_{\mathbb{F}_p}\right)}$ and that we have inclusions $\sE\left(-\mathbb{P}^1_{\mathbb{F}_p}\right) \subseteq \sF\left(-\mathbb{P}^1_{\mathbb{F}_p}\right) \subseteq \sE \subseteq \sF$.  Hence, we have an exact sequence
\begin{equation*}
\xymatrix{
0 \ar[r] & \factor{\sF\left(-\bP^1_{\bF_p}\right)}{\sE\left(-\bP^1_{\bF_p}\right)}  \ar[r] &  \sE|_{\bP^1_{\bF_p}} \ar[r] & \factor{\sE}{\sF\left(-\bP^1_{\bF_p}\right)} \ar[r] & 0
}
\end{equation*}
We have 
\begin{equation*}
\factor{\sF\left(-\bP^1_{\bF_p}\right)}{\sE\left(-\bP^1_{\bF_p}\right)} \cong \factor{\sF}{\sE}|_{\mathbb{P}^1_{\mathbb{F}_p}} \cong \sO_{\bP_{\bF_p}^1}(d-1).
\end{equation*}
As $\deg \sE|_{\bP^1_{\bF_p}}=-2-n$, this then also implies that \begin{equation*}
\factor{\sE}{\sF\left(-\bP^1_{\bF_p}\right)}\cong \sO_{\bP_{\bF_p}^1}(-n-1-d).
\end{equation*}
This concludes that $h^0\left(\sE|_{\bP^1_{\bF_p}} \right)=d $.

In particular, by \autoref{prop:elementary_transf_to_blow_up}, $\bP_{\mathbb{P}^1_{\mathbb{Z}}}(\sE)$ can be obtained from $\bP_{\mathbb{P}^1_{\mathbb{Z}}}(\sF)$ by blowing up the degree $n_i$ sections over $\bP^1_{\bF_{p_i}}$ corresponding to the surjections $\alpha_i$, and then contracting the strict transform of $\mathbb{P}_{\mathbb{P}^1_{\mathbb{F}_p}}(\mathcal{F}_p)$. 
\end{example}

\section{On the classification of smooth projective models of surfaces over $\mathbb{Z}$} \label{sec: smooth_Z}

In this section, we apply the previous results together with some recents results in the literature to study smooth projective surfaces over $\Z$. 
We leave it to the reader to spell out to which extent the results of this section generalize to more general bases satisfying the various conditions of \autoref{sec: integers}.

We recall that the Kodaira dimension is invariant in smooth families of surfaces in positive or mixed characteristic by \cite[Theorem 9.1]{KU-85}. 
Because of this, it is natural to subdivide the classification according to the Kodaira dimension of a fibre.
We now give a first partial classification of minimal models over $\mathbb{Z}$.

\begin{theorem} \label{thm: rough_classification_Z}
    Let $X \to \Spec(\mathbb{Z})$ be a projective smooth morphism, where $X$ is integral and $\dim(X)=3$. Then, $X$ is the successive blow-up at $\mathbb{Z}$-points of one of the following:
    \begin{enumerate}
        \item $\mathbb{P}^2_{\mathbb{Z}}$ or $\mathbb{P}_{\mathbb{P}^1_{\mathbb{Z}}}(\mathcal{E})$ where $\mathcal{E}$ is rank 2 vector bundle corresponding to an extension class in $\Ext^1(\mathcal{O}_{\mathbb{P}^1_{\mathbb{Z}}}, \mathcal{O}_{\mathbb{P}^1_{\mathbb{Z}}}(n))$ for some $n$;
        \item a $K_Y$-trivial fibration $Y \to \mathbb{P}^1_\mathbb{Z}$ such that $Y_{0}$ has Kodaira dimension 1;
        \item a smooth minimal surface $Y \to \Spec(\mathbb{Z})$ with big and nef canonical class $K_Y$ with $K_{Y_p}^2 =10-\rho(Y).$ 
        Moreover, $h^0(Y_p, \mathcal{O}_{Y_p}(mK_{Y}))=h^0(Y_0, \mathcal{O}_{Y_0}(mK_{Y_0}))$ for $m \geq 2$ and $|5K_X|$ is a very ample linear system. In particular, smooth canonically polarised surfaces form a bounded family over $\mathbb{Z}$.
    \end{enumerate}
\end{theorem}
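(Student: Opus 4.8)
The plan is to run the machinery of \autoref{section:MMP} and \autoref{sec: kod_-infty} and then split according to the Kodaira dimension of the generic fibre. Since $\mathbb{Z}$ satisfies \ref{item:Minkowski} by Minkowski's theorem and, by \autoref{cor: Minkowski}, also \Bref{i}, \PIDref{i} and \AFref{i} for every $i$, \autoref{cor: red_smooth_models} presents $X$ as an iterated blow-up at $\mathbb{Z}$-points of a smooth projective $Y\to\Spec(\mathbb{Z})$ that is either a Mori fibre space over a smooth base or a minimal model; the centres are honest $\mathbb{Z}$-points rather than higher-degree \'etale covers because $\pi_1(\Spec\mathbb{Z})=1$. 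By invariance of Kodaira dimension in smooth families \cite[Theorem~9.1]{KU-85}, the number $\kappa:=\kappa(Y_0)$ equals $\kappa(Y_p)$ for all $p$, and $K_Y$ is nef precisely when $\kappa\geq 0$; this is what distinguishes the two MMP outcomes.

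First I would treat $\kappa=-\infty$, where $Y\to B$ is a Mori fibre space and \autoref{thm: class_kod_negative} applies. Its cases (1) and (2), combined with \Bref{3}, \PIDref{3} (resp.\ \Bref{2}, \PIDref{2}) and the fact that $\mathbb{Z}$ is Euclidean (so case (2)(c) is available), give exactly $Y\simeq\mathbb{P}^2_{\mathbb{Z}}$ or $Y\simeq\mathbb{P}_{\mathbb{P}^1_{\mathbb{Z}}}(\mathcal{E})$ with $\mathcal{E}$ an extension of line bundles, i.e.\ after a twist an extension class in $\Ext^1(\mathcal{O}_{\mathbb{P}^1_{\mathbb{Z}}},\mathcal{O}_{\mathbb{P}^1_{\mathbb{Z}}}(n))$; this is conclusion (1). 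Case (3) of \autoref{thm: class_kod_negative}, where $p_a(B_K)>0$, cannot occur: $B\to\Spec(\mathbb{Z})$ would be a smooth proper family of curves, forced to equal $\mathbb{P}^1_{\mathbb{Z}}$ by \autoref{prop:curve_Z}.

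Next, for a minimal model ($\kappa\geq 0$) I would exclude $\kappa=0$ and then handle $\kappa=1$. A surface of Kodaira dimension $0$ would be abelian, bielliptic, K3 or Enriques; the abelian and bielliptic cases have $q\geq 1$, hence $h^{1,0}(Y_0)\neq 0$, contradicting \Hodgeref{1}, the K3 case has $p_g=1$, contradicting $h^{2,0}(Y_0)=0$ from \Hodgeref{2}, and Enriques surfaces over $\mathbb{Z}$ are excluded by Schr\"oer \cite{Sch23} (here $\mathbb{Z}$ satisfies \Hodgeref{3}, hence \Hodgeref{1} and \Hodgeref{2}, by \autoref{thm: abrashkin}). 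For $\kappa=1$, the surviving fibres have $q=p_g=0$ and $K_{Y_0}^2=0$; I would construct the relative Iitaka fibration $Y\to Z$ over $\mathbb{Z}$ by proving $K_Y$ relatively semiample, and since $q=0$ the base of the fibration on each fibre has genus $0$, so $Z\to\Spec(\mathbb{Z})$ is a smooth proper family of genus-$0$ curves and hence $Z\simeq\mathbb{P}^1_{\mathbb{Z}}$ by \autoref{prop:curve_Z}, with $K_Y$ trivial on the fibres by construction. This is conclusion (2).

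Finally, for $\kappa=2$ the class $K_Y$ is big and nef. From \Hodgeref{2} one gets $q=p_g=0$, so $\chi(\mathcal{O}_{Y_p})=\chi(\mathcal{O}_{Y_0})=1$ by flatness and $b_1(Y_p)=0$ by smooth proper base change; Noether's formula then gives $K_{Y_p}^2=12\chi(\mathcal{O}_{Y_p})-c_2(Y_p)=10-b_2(Y_p)$, using $c_2(Y_p)=b_2(Y_p)+2$. Since $p_g=0$, the Lefschetz $(1,1)$-theorem gives $\rho=b_2$ on the geometric generic fibre, and the specialization inequality $\rho(Y_{\overline{0}})\leq\rho(Y_{\overline{p}})\leq b_2(Y_{\overline{p}})=b_2(Y_{\overline{0}})$ forces equality on every geometric fibre, so $K_{Y_p}^2=10-\rho(Y)$ (whence $1\leq K_{Y_p}^2\leq 9$). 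Constancy of $h^0(mK)$ for $m\geq 2$ follows from Riemann--Roch (so $\chi(mK)$ is constant), Serre duality (so $h^2(mK)=h^0((1-m)K)=0$), and Tanaka's surface vanishing \cite[Theorem~3.8]{Tan18} applied to $mK=K+(m-1)K$ (so $h^1(mK)=0$ in every characteristic), after which cohomology and base change makes $h^0$ fibrewise constant; for the canonically polarised fibres, very ampleness of $|5K|$ and the resulting boundedness come from the bounded invariants $\chi=1$, $1\leq K^2\leq 9$ together with Bombieri's pluricanonical theorem and its positive-characteristic refinements. The hard part will be exactly this positive-characteristic input: in the $\kappa=1$ case the easy base-point-free theorem \autoref{prop: easy_bpf} does not apply directly (its big-and-nef hypothesis fails for $L=mK_Y$), so relative semiampleness of $K_Y$ must be obtained from fibrewise abundance for surfaces plus a spreading-out argument; and in the general-type case the very ampleness of $|5K|$ has to be invoked in a characteristic-free form, Bombieri's original argument being char $0$.
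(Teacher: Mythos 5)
Your overall architecture coincides with the paper's proof: reduce via \autoref{cor: red_smooth_models} to Mori fibre spaces and minimal models, split by Kodaira dimension (invariant by \cite[Theorem 9.1]{KU-85}), handle $\kappa=-\infty$ through \autoref{thm: class_kod_negative} (with case (3) there excluded over $\mathbb{Z}$ by \autoref{prop:curve_Z}, exactly as you argue), exclude $\kappa=0$ via the Abrashkin--Fontaine constraints together with Schr\"oer's non-existence of Enriques surfaces \cite{Sch23}, and in the $\kappa=1$ case obtain relative semiampleness of $K_Y$ from fibrewise abundance plus a globalization result --- the ``spreading-out'' input you flag as the hard part is precisely \cite[Theorem 1.2]{Wit24} in the paper, after which the base is identified with $\mathbb{P}^1_{\mathbb{Z}}$ by \autoref{prop:curve_Z} just as you propose. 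Your general-type numerics ($\chi=1$, $b_1=0$, Noether's formula, $\rho=b_2$ via Lefschetz $(1,1)$ and specialization) also match the paper's computation of $K_{Y_p}^2=10-\rho(Y)$.

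There is, however, one genuine error in the general-type case: you claim $h^1(Y_p,\mathcal{O}_{Y_p}(mK))=0$ for all $m\geq 2$ ``in every characteristic'' by applying \cite[Theorem 3.8]{Tan18} to $mK=K+(m-1)K$. That theorem is the \emph{asymptotic} Kawamata--Viehweg vanishing: it gives $h^1(mL)=0$ only for $m\geq m_0$ with $m_0$ depending on the surface, and it cannot be upgraded to fixed small $m$, because honest Kawamata--Viehweg (even Kodaira) vanishing fails in positive characteristic --- there are smooth surfaces with ample $L$ and $H^1(X,K_X+L)\neq 0$, so nothing about big-and-nef $(m-1)K$ alone forces the vanishing at $m=2$. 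The fix is the one the paper uses, and you already have the needed hypothesis in hand: since $Y\to\Spec(\mathbb{Z})$ is smooth, each $Y_p$ lifts to $\mathbb{Z}/p^2$, so Kawamata--Viehweg vanishing holds on $Y_p$ by Deligne--Illusie \cite[Corollaire 2.8.(ii)]{Deligne--Illusie}, giving $h^i(Y_p,\mathcal{O}_{Y_p}(mK))=0$ for $i>0$, $m\geq 2$, and then constancy of $h^0(mK)$ follows by base change as you say. Relatedly, the ``characteristic-free form'' of Bombieri you leave open is supplied in the paper by Ekedahl \cite{Eke88} together with \cite{Bom73}; with these two substitutions your argument becomes complete and agrees with the paper's.
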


\begin{proof}
    We divide the proof according to the Kodaira dimension of $X$. By \autoref{cor: red_smooth_models} we need to classify only Mori fibre spaces (case (a), done in \autoref{thm: class_kod_negative}) and minimal models.

    The case of Kodaira dimension 0 does not occur. Indeed, by combining the classification of smooth surfaces with trivial canonical class with \autoref{thm: abrashkin}, the only possible situation would be that the minimal model $Y$ of $X$ is a smooth family of Enriques surfaces over the integers. Such a family cannot exist by \cite{Sch23}.

    Let us consider a family of minimal surfaces of Kodaira dimension 1.
    By the abundance theorem for smooth surfaces, we know that $K_{Y_t}$ is semi-ample for every $t \in \Spec(\mathbb{Z})$ and thus $K_Y$ is semiample over $\mathbb{Z}$ by \cite[Theorem 1.2]{Wit24}. 
    Let $p \colon Y \to C$ be the contraction associated to $K_Y$.
    We know that $C$ is normal and $\dim(C) = 2$. As $H^1(C_0, \mathcal{O}_{C_0}) \hookrightarrow H^1(S_0, \mathcal{O}_{S_0})$, we deduce that $C_0$ is a curve of genus $0$.
    Furthermore, we know that the closed fibre $C_p$ over a prime $p$ is integral (as $Y_p$ is integral) and thus $h^1(C_p, \mathcal{O}_{C_p})=0$. 
    From this, we deduce that $C_{p}$ is a smooth conic. Thus, $C \simeq \mathbb{P}^1_\mathbb{Z}$ by \autoref{prop:curve_Z}.

Finally, for the case of minimal surfaces of general type we can combine the Noether formula with \autoref{thm: abrashkin} to deduce
    $10-b_2(X_0)=K^2_{X_0}$ and $b_2(X_0)=\rho(X_0)$.
    As $Y_p$ is a smooth surface and it lifts to $p^2$ the Kawamata--Viehweg vanishing holds by \cite[Corollaire 2.8.(ii)]{Deligne--Illusie} and thus $h^i(Y_p, \mathcal{O}_{Y_p}(mK_p))=0$ for $i>0$ and $m \geq 2$. 
    Finally, $|5K_X|$ is very ample by \cite[Main theorem]{Eke88} and \cite{Bom73}.
\end{proof}

\begin{remark}
    In case (2), we do know whether the base change $Y_p \rightarrow \mathbb{P}^1_{\mathbb{F}_p}$ coincides with the Iitaka fibration of $Y_p$ (cf. \cite{EH21, Bri23_invariance_failure, BBS24}).
\end{remark}

\begin{corollary}
Let $X \to \Spec(\mathbb{Z)}$ be a smooth projective morphism of relative dimension 2.      
If $P_2(X_0)=0$, then $X_{0}$ is rational.
\end{corollary}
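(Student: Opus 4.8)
The plan is to reduce the statement to the vanishing of the irregularity of $X_0$ and then invoke Castelnuovo's rationality criterion together with the classification already obtained. Write $q(X_0) = h^1(X_0,\mathcal{O}_{X_0})$ and $p_g(X_0) = h^0(X_0,K_{X_0})$; note first that $P_2(X_0) = 0$ forces $p_g(X_0) = 0$, since a nonzero section of $K_{X_0}$ would square to a nonzero section of $2K_{X_0}$. So the only missing input for Castelnuovo's criterion — which asserts that a smooth projective surface over an algebraically closed field of characteristic $0$ with $q = P_2 = 0$ is rational — is the vanishing $q(X_0) = 0$ (recall that both $q$ and $P_2$ are unchanged under the base change $\mathbb{Q} \to \overline{\mathbb{Q}}$ by flatness of cohomology).

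The heart of the argument — and the step I expect to be the main obstacle — is proving $q(X_0) = 0$; this is where the arithmetic of $\mathbb{Z}$ enters through Abrashkin--Fontaine. Set $A \coloneqq \Pic^0_{X_0/\mathbb{Q}}$, an abelian variety over $\mathbb{Q}$ of dimension $q(X_0)$. Since $X \to \Spec(\mathbb{Z})$ is smooth and proper, smooth--proper base change shows that the Galois representation $H^1_{\et}(X_{0,\overline{\mathbb{Q}}},\mathbb{Q}_\ell)$ is unramified at every prime $p \neq \ell$; as this module is, up to a Tate twist and dualization, the rational $\ell$-adic Tate module of $A$, the N\'eron--Ogg--Shafarevich criterion gives that $A$ has good reduction at every $p \neq \ell$, and letting $\ell$ vary yields good reduction at all primes. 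Hence $A$ spreads out to an abelian scheme over $\Spec(\mathbb{Z})$, which must be trivial since $\mathbb{Z}$ satisfies \ref{item:AbrashkinFontaine} for all $i$ (\autoref{thm: abrashkin}, \autoref{cor: Minkowski}). Therefore $q(X_0) = 0$. (Alternatively, having already observed $h^2(X_0,\mathcal{O}_{X_0}) = p_g(X_0) = 0$, one could try to feed this into \autoref{prop: pic}; but that proposition requires the vanishing of $h^2$ on every closed fibre, which is not immediate, so the N\'eron--Ogg--Shafarevich route seems cleaner.)

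With $q(X_0) = P_2(X_0) = 0$ in hand, Castelnuovo's criterion shows that $X_{0,\overline{\mathbb{Q}}}$ is rational; in particular $\kappa(X_0) = -\infty$. To upgrade geometric rationality to rationality over $\mathbb{Q}$, I would run the MMP over $\mathbb{Z}$: by \autoref{cor: red_smooth_models}, $X$ is an iterated blow-up at $\mathbb{Z}$-points of a smooth $Y \to \Spec(\mathbb{Z})$ that is either a Mori fibre space or a minimal model. A minimal model has $K_Y$ nef, hence $K_{Y_0}$ nef and $\kappa(Y_0) \geq 0$, contradicting $\kappa(Y_0) = \kappa(X_0) = -\infty$; so $Y$ is a Mori fibre space. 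By \autoref{thm: class_kod_negative}, using that $\mathbb{Z}$ satisfies \Bref{i} and \PIDref{i} (\autoref{cor: Minkowski}) and that the positive-genus base of case (3) there is excluded by \autoref{prop:curve_Z}, we conclude $Y \simeq \mathbb{P}^2_{\mathbb{Z}}$ or $Y$ is a Hirzebruch surface $\mathbb{P}_{\mathbb{P}^1_{\mathbb{Z}}}(\mathcal{E})$. Both have generic fibre rational over $\mathbb{Q}$, and since $X_0 \to Y_0$ is an iterated blow-up at closed points, $X_0$ is rational, as desired. A more computational alternative, staying entirely within \autoref{thm: rough_classification_Z}, would instead rule out its cases (2) and (3) directly: case (3) gives $P_2(Y_0) = \chi(\mathcal{O}_{Y_0}) + K_{Y_0}^2 \geq 2$ by Riemann--Roch and Kawamata--Viehweg vanishing, while case (2), combined with $q(Y_0) = 0$ and the canonical bundle formula for the elliptic fibration $Y_0 \to \mathbb{P}^1$, forces $P_2(Y_0) \geq 1$.
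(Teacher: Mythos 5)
Your proof is correct, and its overall skeleton matches the paper's: establish $q(X_0)=0$ using Abrashkin--Fontaine, invoke Castelnuovo, then descend rationality from $\overline{\mathbb{Q}}$ to $\mathbb{Q}$ via the MMP classification over $\mathbb{Z}$. Where you genuinely diverge is at the step you correctly identify as the heart of the matter: the paper obtains $h^1(X_0,\mathcal{O}_{X_0})=0$ in a single line from \autoref{thm: abrashkin}, since property \Hodgeref{3} directly asserts the vanishing of the off-diagonal Hodge numbers $h^{p,q}(X_K)$ for $p+q\leq 3$ -- so your N\'eron--Ogg--Shafarevich detour (smooth-proper base change gives unramifiedness of $H^1_{\et}$, hence good reduction of $\Pic^0_{X_0/\mathbb{Q}}$ everywhere, hence an abelian scheme over $\mathbb{Z}$, which \AFref{i} kills) is unnecessary over $\mathbb{Z}$, though it is entirely sound. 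What your route buys is generality: it only consumes the weaker property \AFref{i}, so the same argument gives the corollary over any base satisfying \AFref{i} together with the hypotheses needed for the MMP and descent steps (e.g.\ rings of integers of the real quadratic fields with $\Delta \leq 21$ classified by Schoof, where the full Hodge-number statement \Hodgeref{3} is not available), whereas the paper's one-liner is tied to the Fontaine-type Hodge vanishing. Your final descent step is the same as the paper's in substance: the paper cites \autoref{thm: rough_classification_Z} and observes $K_X$ is not pseudoeffective, while you unpack this into \autoref{cor: red_smooth_models}, \autoref{thm: class_kod_negative} and \autoref{prop:curve_Z}, which are precisely the ingredients of case (1) of that theorem; your closing computational alternative (ruling out cases (2) and (3) by plurigenus estimates via Riemann--Roch, Kawamata--Viehweg and the canonical bundle formula) is also valid but redundant, since non-pseudoeffectivity of $K$ on a geometrically rational surface already excludes both cases.
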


\begin{proof}
    By \autoref{thm: abrashkin}, we have $h^1(X_0, \mathcal{O}_{X_0})=0$.
    By Castelnuovo's theorem, $X_{\overline{\mathbb{Q}}}$ is a rational surface and thus $K_X$ is not pseudoeffective.
    Thus by \autoref{thm: rough_classification_Z}, we conclude that $X_0$ is rational as well.
\end{proof}


\subsubsection*{del Pezzo surfaces over $\mathbb{Z}$}
As an application, we recover a result of Scholl \cite{Sch85} on the classification of smooth del Pezzo surfaces over $\mathbb{Z}$.
We say that a collection $Z_i \subset \mathbb{P}^2_{\mathbb{Z}}$ of $\mathbb{Z}$-points are in general position if for every prime $p$ the reduction $Z_{i,p}$ are in general position in the classical sense \cite[Proposition 8.1.25]{Dol12}.

\begin{corollary} \label{cor: dP}
    Let $X \to \Spec(\mathbb{Z})$ be a smooth projective morphism  of relative dimension 2 such that $-K_X$ is ample.
    Then $K_X^2 \geq 5$ and $X$ is isomorphic to one of the following: 
    \begin{enumerate}
        \item $\mathbb{P}^1_{\mathbb{Z}} \times \mathbb{P}^1_{\mathbb{Z}}$;
        \item the blow-up of $\mathbb{P}^2_{\mathbb{Z}}$ along a subset of $\left\{[1:0:0], [0:1:0], [0:0:1], [1:1:1] \right\}$.
        \end{enumerate}

\end{corollary}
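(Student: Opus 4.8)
The plan is to feed the constancy of the Picard scheme (\autoref{thm: full_Pic}) into the MMP reduction (\autoref{cor: red_smooth_models}), and then to extract the degree bound from the arithmetic of $\mathbb{P}^2$ over $\mathbb{F}_2$. Recall that $\mathbb{Z}$ satisfies all of \ref{item:Minkowski}, \Bref{i} and \PIDref{i} by \autoref{cor: Minkowski}. Since every fibre is a geometrically rational del Pezzo surface, we have $b_1(X_0) = 0$ and $h^2(X_s,\mathcal{O}_{X_s}) = 0$, so \autoref{thm: full_Pic} applies and $\Pic_{X/\mathbb{Z}} \simeq \underline{\Pic(X_{\overline{\mathbb{Q}}})}$ is the constant group scheme. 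In particular the Picard rank $\rho$ and hence the degree $K_X^2 = 10 - \rho$ are the same for all fibres, and the Galois action on $\Pic(X_{\overline{\mathbb{F}_p}})$ is trivial, i.e. every special fibre $X_p$ is a \emph{split} del Pezzo surface over $\mathbb{F}_p$. Consequently, for $K_X^2 = d \leq 7$, each $X_p$ is the blow-up of $\mathbb{P}^2_{\mathbb{F}_p}$ at $9-d$ distinct $\mathbb{F}_p$-rational points in general position (all $(-1)$-curves being individually defined over $\mathbb{F}_p$, hence contractible over $\mathbb{F}_p$).

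I would then obtain the bound $K_X^2 \geq 5$ by specialising to $p = 2$. If $K_X^2 \leq 4$, then $X_2$ is the blow-up of $\mathbb{P}^2_{\mathbb{F}_2}$ at $\geq 5$ points of $\mathbb{P}^2(\mathbb{F}_2)$, no three of which are collinear. But $\mathbb{P}^2(\mathbb{F}_2)$ has only seven points and no five of them are in general position (any fifth point lies on a line through two of the four frame points, so the maximal arc has size $4$), a contradiction. Hence $\rho \leq 5$ and $K_X^2 \geq 5$.

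For the classification itself, apply \autoref{cor: red_smooth_models}: $X$ is an iterated blow-up at $\mathbb{Z}$-points of a smooth Mori fibre space $Y$ (the centres are regular and \'etale over $\mathbb{Z}$, hence $\mathbb{Z}$-sections by Minkowski). Since the fibres have negative Kodaira dimension, $Y$ is not a minimal model, so by \autoref{thm: class_kod_negative} it is either $\mathbb{P}^2_{\mathbb{Z}}$ (via \Bref{3} and \PIDref{3}) or a $\mathbb{P}^1$-bundle over $\mathbb{P}^1_\mathbb{Z}$ (via \Bref{2} and \PIDref{2}); the positive-genus base of case (3) is excluded by rationality. Ampleness of $-K_X$ together with adjunction on fibres forces each Hirzebruch fibre $Y_p = \mathbb{F}_m$ to satisfy $m \leq 1$, so by the parity statement of \autoref{prop:types_of_Hirzebruch_surfaces} and \autoref{lem: constancy_Hirzebruch} the type is constant and $Y$ is $\mathbb{P}^1_\mathbb{Z} \times \mathbb{P}^1_\mathbb{Z}$ or $\mathbb{P}_{\mathbb{P}^1_\mathbb{Z}}(\mathcal{O} \oplus \mathcal{O}(-1))$. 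If $X_{\overline{\mathbb{Q}}} \simeq \mathbb{P}^1 \times \mathbb{P}^1$, then $X = Y = \mathbb{P}^1_\mathbb{Z} \times \mathbb{P}^1_\mathbb{Z}$ and we are in case (1); otherwise $X_{\overline{\mathbb{Q}}}$ is a blow-up of $\mathbb{P}^2_{\overline{\mathbb{Q}}}$, and since $\mathbb{P}_{\mathbb{P}^1_\mathbb{Z}}(\mathcal{O}\oplus\mathcal{O}(-1)) \cong \Bl_{\mathrm{pt}}\mathbb{P}^2_\mathbb{Z}$ we may absorb $Y$ into the tower and take $X$ to be the blow-up of $\mathbb{P}^2_\mathbb{Z}$ at $n = 9 - K_X^2 \leq 4$ distinct $\mathbb{Z}$-points whose reductions are in general position modulo every prime (no collision or infinitely near point can occur, as this would create a $(\leq -2)$-curve violating ampleness of $-K_X$ on a fibre).

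It then remains to normalise such a configuration of $n \leq 4$ points. The key observation is that ``general position modulo every prime'' is exactly a unimodularity condition: no three collinear mod every $p$ means that the relevant maximal minors of the coordinate matrix are coprime, so the corresponding primitive vectors extend to part of a $\mathbb{Z}$-basis. Hence some $g \in \mathrm{GL}_3(\mathbb{Z})$ sends any three of the points to $[1:0:0], [0:1:0], [0:0:1]$, and computing the determinant conditions against the fourth point forces its coordinates to be units, so after a diagonal sign change it becomes $[1:1:1]$. This exhibits $X$ as the blow-up of $\mathbb{P}^2_\mathbb{Z}$ along a subset of the frame, giving case (2) (and showing that degree $5$ is rigid); conversely each such surface is a smooth del Pezzo family over $\mathbb{Z}$ because the frame is a $4$-arc modulo every prime. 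I expect the main obstacle to be the degree bound: it genuinely requires combining the splitness forced by the constant Picard scheme with the combinatorics of $\mathbb{P}^2(\mathbb{F}_2)$, whereas the $\mathrm{GL}_3(\mathbb{Z})$-normalisation is elementary and is precisely where Scholl's explicit list reappears.
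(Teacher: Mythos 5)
Your proposal is correct, and the classification half follows the paper's proof almost verbatim: reduction via \autoref{cor: red_smooth_models} to a Mori fibre space, elimination of higher Hirzebruch types by ampleness of $-K$ on fibres combined with the parity statement of \autoref{prop:types_of_Hirzebruch_surfaces} and \autoref{lem: constancy_Hirzebruch}, and then the $\GL_3(\mathbb{Z})$-normalisation (three points to the coordinate frame via a unimodular determinant, the fourth forced into $\mathbb{G}_m^2(\mathbb{Z})$ and rescaled to $[1:1:1]$) is exactly the paper's argument. Where you genuinely diverge is the degree bound $K_X^2 \geq 5$. The paper stays inside the integral normal form: a fifth $\mathbb{Z}$-point must also lie in $\mathbb{G}_m^2(\mathbb{Z})$, hence has all coordinates $\pm 1$ and is congruent to $[1:1:1]$ modulo $2$, colliding with $Z_4$ there. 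You instead argue intrinsically on the special fibre: \autoref{thm: full_Pic} (which the paper does not invoke in this proof) makes $X_2$ a \emph{split} del Pezzo surface over $\mathbb{F}_2$, hence for degree $\leq 4$ a blow-up of $\mathbb{P}^2_{\mathbb{F}_2}$ at five or more rational points with no three collinear, contradicting the fact that the maximal arc in $\mathbb{P}^2(\mathbb{F}_2)$ has size $4$. Both contradictions live at $p = 2$ and are morally the same count, but your version buys a stronger intermediate statement (every fibre of such a family is split, and the obstruction is visible on $X_2$ alone, before any normalisation over $\mathbb{Z}$), at the cost of importing the structure theory of split del Pezzo surfaces over non-closed fields; the paper's version is more self-contained given that the $\GL_3(\mathbb{Z})$-reduction has already been carried out. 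One shared gloss to be aware of: when the MMP terminates at $Y = \mathbb{P}^1_{\mathbb{Z}} \times \mathbb{P}^1_{\mathbb{Z}}$ with $X \neq Y$, your absorption step is only stated for the $\mathbb{F}_1$-model, and one additionally needs $\Bl_P(\mathbb{P}^1_{\mathbb{Z}} \times \mathbb{P}^1_{\mathbb{Z}}) \simeq \Bl_{P_1,P_2}(\mathbb{P}^2_{\mathbb{Z}})$ over $\mathbb{Z}$ (available from the elementary-transformation machinery, cf. \autoref{prop:elementary_transf_to_blow_up}, or by contracting the two ruling transforms through $P$ using \autoref{prop: easy_bpf} and \autoref{thm: smooth_after_MMP}); the paper elides this at the same level with ``as in the classical case,'' so this is not a gap specific to your argument.
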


\begin{proof}
    The Mori fibre spaces with anticanonical ample class are $\mathbb{P}^2_\mathbb{Z}$, $\mathbb{P}^1_{\mathbb{Z}} \times \mathbb{P}^1_{\mathbb{Z}}$ and $\mathbb{P}_{\mathbb{P}^1_\mathbb{Z}}(\mathcal{O}_{\mathbb{P}^1_{\mathbb{Z}}} \oplus \mathcal{O}_{\mathbb{P}^1_{\mathbb{Z}}}(-1))$. 
    In the case of $\mathbb{P}(\mathcal{E}) \to \mathbb{P}^1_{\mathbb{Z}}$, the Fano condition implies for each point $p \in \Spec(\mathbb{Z})$, the fibre $X_t$ is either $\mathbb{P}^1 \times \mathbb{P}^1$ or $\mathbb{F}_1$. Note that, in this case, the type of every fibre depends exclusively on the type of the generic fibre by \autoref{prop:types_of_Hirzebruch_surfaces}.
    We can thus apply \autoref{lem: constancy_Hirzebruch} to conclude.

    As in the classical case, we reduce to study which $\mathbb{Z}$-points $Z_i$ on $\mathbb{P}^2_{\mathbb{Z}}$ we are allowed to blow-up.
    By hypothesis, $X_p$ is a del Pezzo surface for every prime $p$, which is is equivalent to asking $Z_{i}$ to be in general position. 
    In particular, no more than two points must lie on the same line after reduction modulo $p$.
    As $\mathbb{Z}$ is a PID, $\mathbb{P}^2(\mathbb{Z})$ coincides with $(\mathbb{Z}^{3} \setminus \left\{(0,0,0)\right\}) / {\mathbb{Z}}^{*}$.
    Let $\left\{Z_i=[a_i:b_i:c_i]\right\}_{i=1}^3$ be an ordered set of three $\mathbb{Z}$-points in general position and let $A \in M_{3 \times 3}(\mathbb{Z)}$ be the matrix for which $Ae_i=Z_i$. As the $Z_i$ are not collinear modulo every $p$, we deduce that $\det(A) \neq 0$ modulo every prime $p$. This implies that $\det(A) \in \mathbb{Z}^{*}$ and thus $A \in \GL_3(\mathbb{Z})$.
    Let $Z_4=[a:b:c]$ be a fourth $\mathbb{Z}$-point in general position with respect to $e_i$, equivalently $Z_{4,p}$ does not intersect the union of three lines $(xyz=0)$ modulo $p$.
    This implies that $Z_4$ belongs to the standard torus $\mathbb{G}_m^2(\mathbb{Z})=\left\{[x_0:x_1:x_2] | x_i \in \mathbb{Z^{*}} \right\}$. Acting by rescaling, we see that $Z$ is projectively equivalent to $[1:1:1]$.
    Suppose there exists a fifth $\mathbb{Z}$-point $Z_5$ in general position. As $Z_{5} \in \mathbb{G}_m^2(\mathbb{Z)}$, its reduction modulo 2 is equal to $Z_4$ modulo 2, thus reaching a contradiction.
\end{proof}

\begin{remark}
  The analogous conclusion of \autoref{cor: dP} also holds if we replace $\mathbb{Z}$ with a ring of integers $\mathcal{O}_K$ satisfying \ref{item:Minkowski}, \Bref{2}, \PIDref{1} and that admits a closed point with residue field $\mathbb{F}_2$ (for example, $\mathcal{O}_K=\mathbb{Z}[i]$).
\end{remark}

\end{example}

    \bibliographystyle{amsalpha}
	\bibliography{refs}
	
\end{document}